\numberwithin{equation}{section}
\newtheorem{thm}{Theorem}[section]
\newtheorem{lem}[thm]{Lemma}
\newtheorem{cor}[thm]{Corollary}
\newtheorem{Def}[thm]{Definition}
\theoremstyle{definition}
\newtheorem{Ass}[thm]{Assumption}
\newtheorem{rem}[thm]{Remark}
\newtheorem{Ex}[thm]{Example}
\DeclareMathOperator*{\esssup}{ess\,sup}
\DeclareMathOperator*{\essinf}{ess\,inf}
\DeclareMathOperator{\DIV}{div}
\newcommand{\R}{\mathbb{R}}
\newcommand{\N}{\mathbb{N}}
\newcommand{\supp}{\text{supp}}
\newcommand{\diff}{\mathop{}\!\mathrm{d}}
\newcommand{\Clog}{C_{\scalebox{.7} {\mbox{log}}}}
\newcommand{\wstar}{\overset{\ast}{\rightharpoonup}}
\newcommand{\mus}{L_M(\Omega_T)}
\newcommand{\intmus}[1]{\int_{\Omega_T} M\left(t,x,#1\right) \diff t \diff x}
\newcommand{\intmusc}[1]{\int_{\Omega_T} M^*\left(t,x,#1\right) \diff t \diff x}
\newcommand{\armus}[1]{\xrightarrow{#1}}
\newcommand{\doublewidetilde}[1]{{%
  \mathpalette\double@widetilde{#1}%
}}
\newcommand{\double@widetilde}[2]{%
  \sbox\z@{$\m@th#1\widetilde{#2}$}%
  \ht\z@=.9\ht\z@
  \widetilde{\box\z@}%
}
\author{Miroslav Bul\'i\v{c}ek}
\address{Mathematical Institute, Faculty of Mathematics and Physics, Charles University, Sokolovska 83, 186 75, Prague, Czech Republic}
\email{mbul8060@karlin.mff.cuni.cz}
\thanks{Miroslav Bul{\'\i}{\v{c}}ek was supported by the project No. 20-11027X financed by GA\v{C}R}
\author{Piotr Gwiazda}
\address{Institute of Mathematics of Polish Academy of Sciences, Jana i J\k edrzeja \'Sniadeckich 8, 00-656 Warsaw, Poland}
\email{pgwiazda@mimuw.edu.pl}
\thanks{Piotr Gwiazda was supported by National Science Center, Poland through project no. 2018/31/B/ST1/02289.}
\author{Jakub Skrzeczkowski}
\address{Faculty of Mathematics, Informatics and Mechanics, University of Warsaw, Stefana Banacha 2, 02-097 Warsaw, Poland}
\email{jakub.skrzeczkowski@student.uw.edu.pl}
\thanks{Jakub Skrzeczkowski was supported by National Science Center, Poland through project no. 2018/31/B/ST1/02289.}
\begin{document}

\title[Parabolic equations in Musielak - Orlicz spaces - discontinuous in time $N$-function]{Parabolic equations in Musielak - Orlicz spaces with discontinuous in time $N$-function}

\begin{abstract}
We consider a parabolic PDE with Dirichlet boundary condition and monotone operator $A$ with non-standard growth controlled by an $N$-function depending on time and spatial variable. We do not assume continuity in time for the $N$-function. Using an additional regularization effect coming from the equation, we establish the existence of weak solutions and in the particular case of isotropic $N$-function, we also prove their uniqueness. This general result applies to equations studied in the literature like $p(t,x)$-Laplacian and double-phase problems. 
\end{abstract}

\keywords{parabolic equation, non-standard growth, discontinuous $N$-function, existence, uniqueness}

\maketitle

\section{Introduction}
\subsection{PDEs in Musielak - Orlicz spaces} This paper focuses on study of parabolic equations having the form
\begin{equation}\label{intro:parabolic_eq}
u_t(t,x) = \mbox{div} A(t,x,\nabla u(t,x)) + f(t,x) \mbox{ in } (0,T) \times \Omega,
\end{equation}
completed by the homogeneous Dirichlet boundary condition and the initial value $u_0(x)$. Here, $\Omega \subset \mathbb{R}^d$ is a bounded domain, $T$ denotes the length of time interval, $f: (0,T) \times \Omega \to \mathbb{R}$ is a measurable bounded function and $A$ is a monotone operator with coercivity and growth controlled by a so - called $N$-function $M: (0,T) \times \Omega \times \mathbb{R}^d \to \mathbb{R}$ (see Definition \ref{intro:def_Nfunc}), i.e. for almost all $(t,x) \in (0,T)\times \Omega$ and all $\xi \in \mathbb{R}^d$, we have:
\begin{equation}\label{intro:ass_on_A_eq}
M(t,x,\xi) + M^*(t,x,A(t,x,\xi)) \leq c\, A(t,x,\xi)\cdot \xi + h(t,x)
\end{equation}
where $M^*$ denotes the convex conjugate to $M$ (see Definition \ref{intro:def_compl_func}) and $h \in L^1((0,T)\times \Omega)$. Originally, problem \eqref{intro:parabolic_eq} was solved with $M(t,x,\xi) = |\xi|^p$ where $1 < p < \infty$. In this classical setting, \eqref{intro:ass_on_A_eq} implies that $A$, understood as a map
$$
L^p(0,T; W_0^{1,p}(\Omega)) \ni u \mapsto A(t,x,\nabla u) \in \left(L^p\left(0,T; W_0^{1,p}(\Omega)\right) \right)^*,
$$
is a bounded continuous operator and standard approaches (Galerkin method and compactness in Sobolev-Bochner spaces) applies (see \cite{brezis1979strongly, landes1981existence} and references therein) showing that the Sobolev space is an appropriate functional setting for problem \eqref{intro:parabolic_eq}. However, if the $N$-function $M$ appearing in \eqref{intro:ass_on_A_eq} has not a polynomial growth with respect to $\xi$ and is $(t,x)$-dependent, one has to look for a solution $u$ such that its gradient $\nabla u$ belongs to the Musielak - Orlicz space $L_M((0,T) \times \Omega)$, i.e. the space of measureable functions $\xi: (0,T) \times \Omega \to \mathbb{R}^d$ which satisfy
$$
\int_{ (0,T) \times \Omega } M\left(t,x,\frac{\xi(t,x)}{\lambda}\right) \diff t \diff x < \infty
$$
for some $\lambda > 0$, see Definition \ref{intro:def_mus_orlicz_space}. First results in this direction were focused on function $M$ being independent of $(t,x)$ and direction of $\xi$, i.e. $M(t,x,\xi) = N(|\xi|)$. Under the additional coercivity estimate $t^2 \ll N(t)$ and the so-called $\Delta_2$ condition for convex conjugate, i.e.
\begin{equation}\label{intro:Delta2_cond}
N^*(2t) \leq k N^*(t)
\end{equation}
for some constant $k$, this case was treated in \cite{donaldson1974inhomogeneous, robert1974inequations}. Another approach, introduced in \cite{elmahi2002strongly}, assumed growth bound $N(t) \ll t^{d/(d-1)}$ and condition
$
N(C t s) \leq N(t)N(s)
$ to be satisfied by $N$. Briefly speaking, condition \eqref{intro:Delta2_cond} provides a characterization of appropriate dual spaces (see \cite[Theorem 8.20]{adams2003sobolev}) and allows to extract weakly-$^*$ converging subsequences from bounded sequences. Similar methods have been used to study existence of solutions to \eqref{intro:parabolic_eq} with data ``below the duality'', i.e. $f \in L^1((0,T) \times \Omega)$, see \cite{gwiazda2015renormalized}. \\

\noindent Another approach is based on looking for hypothesis on $M$ implying that $C_0^{\infty}((0,T) \times \Omega)$ is a dense subset of $L_M((0,T) \times \Omega)$ (at least in the sense of modular convergence, see Definition \ref{intro:def_mod_conv}) so that one can test \eqref{intro:parabolic_eq} with the solution itself. It is a classical fact that for variable Lebesgue spaces (i.e. $M(t,x,\xi) = |\xi|^{p(t,x)}$) some continuity of $p$ in $(t,x)$ is in general necessary (see \cite[Example 6.12]{cruz2013variable}). Density argument was first exploited to establish well-posedness of \eqref{intro:parabolic_eq} for $M(t,x,\xi) = N(|\xi|)$ in \cite{elmahi2005parabolic} and it was extended later to cover more and more general functions $M$ without assumption of the form \eqref{intro:Delta2_cond} but with some sort of continuity hypothesis with respect to $(t,x)$ \cite{chlebicka2018well, chlebicka2019parabolic, gwiazda2011parabolic, MR3164940,swierczewska2014nonlinear} with the most general condition given in \cite{chlebicka2019parabolic}. We remark that similar progress have been made for elliptic equations and we refer the reader to the excellent review \cite{chlebicka2018pocket} discussing PDEs in Musielak - Orlicz spaces in detail.\\

\noindent We want to emphasize here that all papers mentioned above have a disadvantage on the continuity assumption of $N$-function $M(t,x,\xi)$ with respect to $t$. However, this cannot be optimal. One can consider the PDE of the form:
$$
u_t = \begin{cases}
\mbox{div} \nabla u & \mbox{ in }(0,1] \times \Omega, \\
\mbox{div} \left(|\nabla u|^{2} \nabla u \right)& \mbox{ in } (1,2]\times \Omega, 
\end{cases}
$$
which can be solved piecewisely (first on time interval $(0,1]$ and then on $(1,2]$) so one can develop well-posedness theory. We remark that in the recent monograph \cite[Section 2.2]{antontsev2015evolution} there is an example of degenerated parabolic equation
\begin{equation}\label{intro:pareq_with_wpt}
u_t - \mbox{div}(|u|^{\gamma(t,x)} \nabla u) = f,
\end{equation}
where the exponent $\gamma(t,x)$ satisfies bounds $-1 < \gamma_{-} \leq \gamma(t,x) \leq \gamma_{+} < \infty$ and $\nabla \gamma \in L^2((0,T)\times \Omega)$. Then, \eqref{intro:pareq_with_wpt} has at least one bounded weak solution. Moreover, if
$$
\gamma_{-} > 0 \qquad \mbox{ and } \qquad \esssup_{x \in \overline{\Omega}} |\nabla\gamma(t,x)| \in L^2(0,T),
$$
the solution becomes unique. However, these results are strongly based on the particular form of the operator in \eqref{intro:pareq_with_wpt}. Finally, let us remark that many problems that are of current interests can be studied in the framework  \\

\noindent In this paper we establish the existence of solutions to \eqref{intro:parabolic_eq} in the Musielak - Orlicz space $L_M((0,T)\times \Omega)$ without any assumption on continuity of $M(t,x,\xi)$ with respect to $t$ (see Theorem \ref{mainresult:existence}). Moreover, for isotropic $N$-functions of the form $M(t,x,|\xi|)$ we obtain the uniqueness in a given class \footnote{Note that in case of spatially boundary conditions, we have the uniqueness of a weak solution even without any structural assumption on $M$.}. The main features of our work are:
\begin{itemize}
\item In contrast to works described above, we do not try to approximate {\it every} function in modular topology but only the distributional solution to \eqref{intro:parabolic_eq}. Using the equation satisfied by the solution, we can retrieve the missing regularity in time and proceed without continuity with respect to time assumption for $M(t,x,\xi)$. Similar approaches have been used for renormalized solutions to the transport equation, see \cite[Section 2.1]{de2007ordinary}.
\item Existence result is deduced by using only the local versions of standard methods: the energy equality \eqref{res:local_energy_equality} and the monotonicity method in Section \ref{sect:monotonicity_trick}.
\item Uniqueness result is based on the global energy equality \eqref{res:global_energy_equality_without_truncation} that can be deduced from the local one.
\end{itemize}
We remark that \eqref{intro:parabolic_eq} generalizes a great variety of parabolic problems and we refer to \cite[Corollary 1.1-1.2, Example 1.1-1.2]{chlebicka2019parabolic} for a long list of examples with assosciated $N$-functions. This includes double phase problems where the $N$-function $M$ is trapped between two power-type functions. Such equations have been studied by Marcellini \cite{MR969900,MR1094446} and they are still subject of active research \cite{MR3570955,MR3775180,MR3294408}.\\

\noindent Finally, we would like to emphasize that the studied problem has not only a theoretical background but can find an application in physically well-motivated problems whenever one considers rapid changes of the underlying equations with respect to time variable. As a prototypic example may serve the flow of incompressible electrorheological fluids (see \cite{diening2011lebesgue} or \cite{ruzicka2000electrorheological} for more details). These fluids are described by the system of equations:
\begin{equation*}
\begin{split}
\DIV {\bf v} &=0,\\
\partial_t {\bf v} + \DIV( {\bf v} \otimes {\bf v}) - \DIV {\bf S} &= - \nabla p + {\bf g} + \nabla {\bf E} \cdot {\bf P},
\end{split}
\end{equation*}
where ${\bf v} = (v_1, v_2, v_3)$ denotes the velocity of the fluid, ${\bf S}$ is the viscous stress tensor, ${\bf E}$ is the electrical intensity and ${\bf P}$ is the polarization. Note that in the case of no electric field present we have 
$$
{\bf S} \sim D({\bf v}) \mbox{ where } D({\bf v}) = \frac{1}{2}\left(\nabla {\bf v} + \left(\nabla {\bf v} \right)^{T} \right).
$$
But, when we apply an electric field, the viscous stress changes dramatically and behaves like ${\bf S} \sim \left|D({\bf v})\right|^{r(t,x)} D({\bf v})$ with some function $r(t,x)$. Hence, it is evident that we are now in the case corresponding to the choice of $N$-function $M(t,x,\xi) = |\xi|^{r(t,x)}$, where $r(t,x)$ is discontinuous with respect to time variable.
\subsection{Musielak - Orlicz spaces}
\noindent In this subsection we briefly recall theory of Musielak - Orlicz spaces. For detailed discussion, we refer the reader to the classical book \cite{musielak2006orlicz} as well as to a modern presentation \cite{chlebicka2019book} aimed at applications in PDEs.\\

\noindent In what follows, $\Omega \subset \mathbb{R}^d$ denotes a bounded domain and $T > 0$ is arbitrary. We set $\Omega_T := (0,T)\times \Omega$. 
\begin{Def}[Young function]\label{intro:def_Nfunc_iso}
We say that $m: [0,\infty) \to [0,\infty)$ is a Young function if the following holds true:
\begin{enumerate}[label=(Y\arabic*)]
\item $m(s) = 0 \iff s = 0$,
\item $m$ is convex,
\item \label{intro:def_Nfunc_iso_prop3} $m$ is superlinear, i.e. $\lim_{s\to 0} \frac{m(s)}{s} = 0$ and $\lim_{s\to \infty} \frac{m(s)}{s} = \infty$. \label{intro:def_hom_iso_superlinearity}
\end{enumerate}
\end{Def}
\begin{Def}[$N$-function]\label{intro:def_Nfunc}
We say that $M: \Omega_T \times \mathbb{R}^d \to \mathbb{R}$ is $N$-function if the following holds true:
\begin{enumerate}[label=(M\arabic*)]
\item{$M(t,x,\xi) = M(t,x,-\xi)$ for a.e. $(t,x) \in \Omega_T$ and all $\xi \in \mathbb{R}^d$,}
\item{$M(t,x,\xi)$ is a Carath\'eodory function, i.e. for a.e. $(t,x) \in \Omega_T$, the mapping $\mathbb{R}^d \ni \xi \mapsto M(t,x,\xi)$ is continuous and for all $\xi \in \mathbb{R}^d$, the mapping $\Omega_T \ni (t,x) \mapsto M(t,x,\xi)$ is measurable,}
\item{for a.e. $(t,x) \in \Omega_T$, the map $\mathbb{R}^d \ni \xi \mapsto M(t,x,\xi)$ is convex,}
\item{there exist two Young functions $m_1$, $m_2$ such that for almost all $(t,x) \in \Omega_T$ and all $\xi \in \mathbb{R}^d$ we have
$$
m_1(|\xi|) \leq M(t,x,\xi) \leq m_2(|\xi|).
$$ \label{intro:defNf_control_isotropic}}
\end{enumerate}
\end{Def}
\begin{Def}[Convex conjugate]\label{intro:def_compl_func}
Let $m$ be a Young function. Then, we define its convex conjugate $m^*$ as
$$
m^*(s) = \sup_{t\in [0,\infty)} (st - m(t)).
$$
Similarly, if $M$ is an $N$-function, we define its convex conjugate $M^*$ as
$$
M^*(t,x,\eta) = \sup_{\xi\in \mathbb{R}^d} (\xi \cdot \eta - M(t,x,\xi)).
$$
\end{Def}
\begin{lem}[Properties of $N$-functions]\label{intro:lem_prop_Nfunc}
Let $m$ be a Young function and $M$ be an $N$-function. Then:
\begin{enumerate}[label=(N\arabic*)]
\item \label{intro:propNfunct:item1} function $\frac{m(t)}{t}$ is nondecreasing,
\item \label{intro:propNfunct:item2} $m^*$ is a Young function,
\item \label{intro:propNfunct:item3} $M^*$ is an $N$-function,
\item \label{intro:propNfunct:item4} $\lim_{|\xi|\to 0} \esssup_{(t,x) \in \Omega_T} \frac{M(t,x,\xi)}{|\xi|} = 0$ and $\lim_{|\xi|\to \infty} \essinf_{(t,x) \in \Omega_T} \frac{M(t,x,\xi)}{|\xi|} = \infty$,
\item \label{intro:propNfunct:item5} if $f_n:\Omega_T \to \mathbb{R}^d$ is a sequence of functions and $\int_{\Omega_T} M(t,x,f_n(t,x)) \diff t \diff x \leq C$ independently of $n$, then $\{f_n\}_{n \in \mathbb{N}}$ is equi-integrable,
\item  \label{intro:propNfunct:item55} if $f_n:\Omega_T \to \mathbb{R}^d$ is a sequence of functions and $\int_{\Omega_T} M(t,x,f_n(t,x)) \diff t \diff x \leq C$ for some $C>1$ then $\|f_n \|_{L_M} \leq C$,
\item \label{intro:propNfunct:item6} if $f_n:\Omega_T \to \mathbb{R}^d$ is a sequence of functions such that $f_n \to f$ a.e. in $\Omega_T$ and $\| f_n \|_{\infty} \leq C$ independently of $n$, then $\int_{\Omega_T} M(t,x,f_n(t,x)) \diff t \diff x \to \int_{\Omega_T} M(t,x,f(t,x)) \diff t \diff x$.
\end{enumerate}
\end{lem}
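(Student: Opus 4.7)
The plan is to take the seven claims one by one; each is a short reduction to either the structural hypotheses on $M$ (mainly \ref{intro:defNf_control_isotropic}) or to standard convex-analytic facts about Legendre transforms.

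For \ref{intro:propNfunct:item1}, given $0<s<t$, I write $s=\tfrac{s}{t}\cdot t+(1-\tfrac{s}{t})\cdot 0$ and invoke convexity together with $m(0)=0$ to obtain $m(s)\le \tfrac{s}{t}m(t)$, hence $m(s)/s\le m(t)/t$. For \ref{intro:propNfunct:item4}, the upper limit is immediate from $M(t,x,\xi)/|\xi|\le m_2(|\xi|)/|\xi|$ combined with the superlinearity of $m_2$ at $0$ (property \ref{intro:def_Nfunc_iso_prop3} of Definition \ref{intro:def_Nfunc_iso}); the lower limit follows symmetrically from $M\ge m_1(|\xi|)$ and superlinearity of $m_1$ at infinity.

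For \ref{intro:propNfunct:item2}, convexity of $m^*$ is automatic (supremum of affine functions) and $m^*(0)=0$ follows from $m\ge 0$. The superlinearity of $m^*$ at $0$ is dual to superlinearity of $m$ at infinity, and superlinearity of $m^*$ at infinity is dual to superlinearity of $m$ at $0$; both are standard computations from the definition. Claim \ref{intro:propNfunct:item3} then piggybacks on \ref{intro:propNfunct:item2}: symmetry and pointwise convexity of $M^*(t,x,\cdot)$ are inherited in the sup, and from $m_1(|\xi|)\le M(t,x,\xi)\le m_2(|\xi|)$ duality reverses the inequalities, yielding $m_2^*(|\eta|)\le M^*(t,x,\eta)\le m_1^*(|\eta|)$ (using that the sup in the definition is attained along $\xi$ parallel to $\eta$ once $m_i$ is isotropic). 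The only delicate point in \ref{intro:propNfunct:item3}, which I regard as \emph{the main technical obstacle}, is the Carath\'eodory property of $M^*$: measurability in $(t,x)$ of an uncountable supremum is not obvious. Here I would exploit continuity of $M(t,x,\cdot)$ in $\xi$ together with the control by $m_2$ (so that the sup is effectively over a bounded set depending continuously on $\eta$) to replace $\sup_{\xi\in\mathbb{R}^d}$ by $\sup_{\xi\in\mathbb{Q}^d}$, which is a countable supremum of measurable functions and therefore measurable.

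The three remaining claims concern the Musielak--Orlicz norm and modular. For \ref{intro:propNfunct:item5}, I use the de la Vall\'ee--Poussin criterion: since $m_1$ is superlinear at infinity and $\int_{\Omega_T} m_1(|f_n|)\diff t \diff x\le \int_{\Omega_T} M(t,x,f_n)\diff t \diff x\le C$, equi-integrability of $\{f_n\}$ follows. For \ref{intro:propNfunct:item55}, I invoke the definition of the Luxemburg norm and the convexity bound $M(t,x,\xi/C)\le M(t,x,\xi)/C$ valid for $C\ge 1$ (a consequence of convexity and $M(t,x,0)=0$): if $\int M(t,x,f_n)\le C$ with $C>1$, then $\int M(t,x,f_n/C)\le 1$, so $\|f_n\|_{L_M}\le C$. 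Finally, \ref{intro:propNfunct:item6} is a direct application of Lebesgue's dominated convergence theorem: $M(t,x,f_n(t,x))\to M(t,x,f(t,x))$ a.e.\ by continuity of $M$ in the last variable, and $M(t,x,f_n)\le m_2(C)$ is an integrable dominant since $|\Omega_T|<\infty$.
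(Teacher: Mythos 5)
Your proof is correct and follows essentially the same route as the paper's: convexity through $0$ for the monotonicity of $m(t)/t$, direct computation from the definition of the conjugate for the Young-function properties of $m^*$, the sandwich $m_2^*(|\eta|)\le M^*(t,x,\eta)\le m_1^*(|\eta|)$ for $M^*$, the de la Vall\'ee--Poussin criterion via $m_1$ for equi-integrability, the convexity estimate $M(t,x,\xi/C)\le M(t,x,\xi)/C$ for the norm bound, and dominated convergence for the last item. The only differences are cosmetic: you make explicit the measurability of $M^*$ (reducing the supremum to $\mathbb{Q}^d$), which the paper leaves implicit, while you compress the superlinearity computations for $m^*$ (in particular the behavior at $0$, which the paper carries out in detail via the level $t_\delta$ with $m(t_\delta)/t_\delta=\delta$ and the verification that $t_\delta\to 0$).
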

\begin{proof}
Let $t \leq s$. By convexity of $m$, we have
$$
\frac{m(t)}{t} =\frac{1}{t} m\left(\frac{t}{s} s + \left(1- \frac{t}{s}\right) 0 \right) \leq 
\frac{1}{t}\frac{t}{s} m(s) = \frac{m(s)}{s},
$$
which proves \ref{intro:propNfunct:item1}. \\

\noindent To see property \ref{intro:propNfunct:item2}, we observe directly from Definition \ref{intro:def_compl_func} that $m^*(0) = 0$ as $m \geq 0$ and $m(0) = 0$. The convexity of $m^*$ follows as it is a supremum of affine maps. Hence, it remains to check \ref{intro:def_Nfunc_iso_prop3} in Definition \ref{intro:def_Nfunc_iso}. For any $\lambda > 0$
$$
\liminf_{s \to \infty} \frac{m^*(s)}{s} \geq \frac{\lambda s - m(\lambda)}{s} \geq  \lambda
$$
which proves $\lim_{s \to \infty} \frac{m^*(s)}{s} = \infty$. Now, let $\delta > 0$ and $s \in (0,\delta)$ be arbitrary. Then,
$$
\frac{m^*(s)}{s} =
\sup_{t \in [0,\infty)} \left(t - \frac{m(t)}{s}\right)
 = \sup_{t \in [0,\infty)} t \left(1 - \frac{m(t)}{t} \frac{1}{s} \right)
\leq \sup_{t \in [0,\infty)} t \left(1 - \frac{m(t)}{t} \frac{1}{\delta} \right)
$$
However, for $t$ such that $\frac{m(t)}{t} \geq \delta$, the maximized expression is negative. By property \ref{intro:propNfunct:item1} and \ref{intro:def_Nfunc_iso_prop3} in Definition \ref{intro:def_Nfunc_iso}, we find $t_{\delta}$, such that $\frac{m(t_{\delta})}{t_{\delta}} = \delta$ and we get that
$$
\frac{m^*(s)}{s} \leq \sup_{t \in [0,t_{\delta}]} t \left(1 - \frac{m(t)}{t} \frac{1}{\delta} \right) \leq t_{\delta}.
$$ 
We claim that $t_{\delta} \to 0$ as $\delta \to 0$. For if not, $C_2 \geq t_{\delta} \geq C_1 > 0$ for some constants $C_1$ and $C_2$. But then
$$
\delta = \frac{m(t_{\delta})}{t_{\delta}} \geq \frac{m(C_1)}{C_2} > \frac{m(0)}{C_2} = 0,
$$
since $m$ is strictly increasing and $m(0) = 0$. This proves \ref{intro:propNfunct:item2}. To see \ref{intro:propNfunct:item3}, we observe that 
$$
m_1(|\xi|) \leq M(t,x,\xi) \leq m_2(\xi) \implies
m_2^*(|\xi|) \leq M^*(t,x,\xi) \leq m_1^*(\xi).
$$
Since $m_1^*$ and $m_2^*$ are Young functions, the conclusion follows. Property \ref{intro:propNfunct:item4} is a consequence of \ref{intro:defNf_control_isotropic} in Definition \ref{intro:def_Nfunc} and superlinearity of Young functions \ref{intro:def_hom_iso_superlinearity}. To deduce \ref{intro:propNfunct:item5}, we note that 
$$
\int_{\Omega_T} m_1(|f_n(t,x)|) \diff t \diff x \leq C
$$
and it is well-known that such bound for superlinear function $m_1$ is equivalent to uniform integrability on bounded domains, see \cite[Proposition 1.27]{Ambrosio2000}. Property \ref{intro:propNfunct:item55} follows by convexity:
$$
\intmus{\frac{f_n(t,x)}{C}} \leq \frac{1}{C} \intmus{f_n(t,x)} \leq 1.
$$
Finally, as Young function are increasing, property \ref{intro:propNfunct:item6} follows by Dominated Convergence Theorem.
\end{proof}
\begin{rem}
In previous works on PDEs in Musielak - Orlicz spaces, $N$-functions were defined slightly differently using combination of conditions in Definition \ref{intro:def_Nfunc_iso}, Definition \ref{intro:def_Nfunc} and Lemma \ref{intro:lem_prop_Nfunc} (see, for instance, \cite{bulivcek2019existence, chlebicka2018well, chlebicka2019parabolic}). We believe that Definition \ref{intro:def_Nfunc} makes our work more accessible for readers not familiar with this setting.
\end{rem}
\begin{Def}[Musielak - Orlicz space $L_M(\Omega_T)$]\label{intro:def_mus_orlicz_space}
Let $M$ be an $N$ - function. Then, the Musielak - Orlicz space $L_M(\Omega_T)$ is defined as
$$
L_M(\Omega_T) = \left\{ \xi: \Omega_T \to \mathbb{R}^d: \mbox{ there is } \lambda>0 \mbox{ such that } \int_{\Omega_T} M\left(t,x, \frac{\xi(t,x)}{\lambda}\right) \diff t \diff x < \infty \right\}.
$$
This is a Banach space equipped with the norm
\begin{equation}\label{intro:musor_norm}
\| \xi \|_{L_M} = \inf \left\{\lambda>0: \int_{\Omega_T} M\left(t,x, \frac{\xi(t,x)}{\lambda}\right) \diff t \diff x \leq 1 \right\}.
\end{equation}
If $m$ is a Young function, we can similarly define the Musielak - Orlicz space $L_m(\Omega_T)$.
\end{Def}
\noindent The following form of the Young and the H\"older inequalities are true in Musielak-Orlicz spaces (see \cite[Lemma 2.4]{wroblewska2010steady}):
\begin{lem}\label{intro:Hold_Mus_inequality}
Let $M$ be an $N$-function and $M^*$ be its convex conjugate. Then, for all $\xi \in \mus$ and $\eta \in L_{M^*}(\Omega_T)$:
\begin{enumerate}[label=(I\arabic*)]
\item \label{intro:Hold_Mus_inequality_Y} $\int_{\Omega_T} \xi(t,x) \eta(t,x) \diff t \diff x \leq \intmus{\xi(t,x)} + \intmusc{\eta(t,x)}$,
\item $\int_{\Omega_T} \xi(t,x) \eta(t,x) \diff t \diff x \leq 2 \| \xi \|_{L_M} \|\eta\|_{L_{M^*}}.$
\end{enumerate}
\end{lem}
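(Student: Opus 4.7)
The plan is to derive both inequalities as direct consequences of the pointwise Young inequality, which is built into Definition \ref{intro:def_compl_func} of the convex conjugate.

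For \ref{intro:Hold_Mus_inequality_Y}, the definition of $M^*$ immediately gives the pointwise bound
$$\xi(t,x) \cdot \eta(t,x) \leq M(t,x,\xi(t,x)) + M^*(t,x,\eta(t,x)) \quad \text{for a.e. } (t,x) \in \Omega_T,$$
and integration over $\Omega_T$ yields the claim. To ensure the left-hand side is well-defined (rather than of the form $\infty - \infty$), I would first verify $\xi \cdot \eta \in L^1(\Omega_T)$: by Definition \ref{intro:def_mus_orlicz_space} there exist scaling parameters $\lambda, \mu > 0$ with $\int_{\Omega_T} M(t,x,\xi/\lambda)\diff t \diff x < \infty$ and $\int_{\Omega_T} M^*(t,x,\eta/\mu)\diff t \diff x < \infty$. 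Applying the pointwise Young inequality to the rescaled pair $(\xi/\lambda, \eta/\mu)$, together with the symmetry in (M1) to pass from $\xi \cdot \eta$ to $|\xi \cdot \eta|$, produces an integrable majorant.

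For (I2), I would use a homogenization argument based on the Luxemburg norm \eqref{intro:musor_norm}. Fix arbitrary $\lambda > \|\xi\|_{L_M}$ and $\mu > \|\eta\|_{L_{M^*}}$. Because $M(t,x,0) = 0$ and $M$ is convex in the last variable, for $\lambda_1 \leq \lambda_2$ one has $M(t,x,\xi/\lambda_2) \leq (\lambda_1/\lambda_2)\, M(t,x,\xi/\lambda_1) \leq M(t,x,\xi/\lambda_1)$, so $\lambda \mapsto \int_{\Omega_T} M(t,x,\xi/\lambda)\diff t \diff x$ is non-increasing. Combined with the infimum in \eqref{intro:musor_norm} this forces
$$\int_{\Omega_T} M(t,x,\xi/\lambda)\diff t \diff x \leq 1 \quad \text{and} \quad \int_{\Omega_T} M^*(t,x,\eta/\mu)\diff t \diff x \leq 1.$$
Applying \ref{intro:Hold_Mus_inequality_Y} to $(\xi/\lambda, \eta/\mu)$ and multiplying through by $\lambda\mu$ yields $\int_{\Omega_T} \xi \cdot \eta \diff t \diff x \leq 2\lambda\mu$, and taking the infimum over admissible $\lambda$ and $\mu$ gives (I2).

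I do not expect any serious obstacle: both inequalities reduce to the one-line pointwise Young inequality paired with scaling. The only technical point is integrability of $\xi \cdot \eta$ in part \ref{intro:Hold_Mus_inequality_Y}, which the rescaling step outlined above handles cleanly.
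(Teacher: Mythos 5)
Your proof is correct. The paper itself does not prove this lemma (it only cites \cite{chlebicka2019book, chlebicka2019parabolic}), and your argument is exactly the standard one behind those references: the pointwise Young inequality from the definition of $M^*$ integrated over $\Omega_T$ for the first part, and the scaling/monotonicity property of the Luxemburg norm combined with the first part for the second, with the integrability of $\xi\cdot\eta$ correctly secured by the rescaled majorant.
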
 
\noindent As convergence in norm in space $\mus$ seems to be too strong for applications in PDEs, we introduce the concept of modular convergence.
\begin{Def}[Modular convergence in $\mus$]\label{intro:def_mod_conv}
We say that sequence of functions $\{\xi_n\}_{n \in \mathbb{N}} \subset \mus$ converges to $\xi$ modularly if there exists $\lambda > 0$ such that
$$
\intmus{\frac{\xi_n(t,x) - \xi(t,x)}{\lambda}} \to 0.
$$
We write $\xi_n \armus{M} \xi$. By convexity, if follows that if $\{\xi_n\}_{n \in \N} \subset \mus$ and $\xi_n \armus{M} \xi$ then $\xi \in \mus$.
\end{Def}
\noindent Note that modularly converging sequences converge in $L^1(\Omega_T)$ and so, they have a subsequence converging a.e. As in the case of classical Lebesgue spaces, simple functions are dense in $\mus$ with respect to the modular convergence:
\begin{lem}[Density of simple functions]\label{intro:dens_simpl_fcn}
Let $\xi \in \mus$. Then, there is a sequence $\{\xi_n\}_{n \in \N}$ of simple functions such that $\xi_n \armus{M} \xi$. 
\end{lem}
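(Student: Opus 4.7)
The plan is to approximate $\xi$ in two stages: first by truncations at height $n$, which yields bounded functions supported on sets of finite measure, and then by the classical simple-function approximation of these bounded functions. A diagonal selection then delivers a single sequence converging modularly.

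First, since $\xi \in L_M(\Omega_T)$ I fix $\lambda_0 > 0$ such that $\int_{\Omega_T} M(t,x, \xi/\lambda_0) \diff t \diff x < \infty$, and set $\xi^n := \xi \, \mathbf{1}_{\{|\xi| \leq n\}}$. The key pointwise observation is that $\xi - \xi^n$ takes only the values $0$ and $\xi$, and since $M(t,x,0) = 0$ (which follows from the upper bound $M \leq m_2(|\cdot|)$ together with $m_2(0) = 0$), we obtain the pointwise domination
\[
M\!\left(t,x, \frac{\xi-\xi^n}{\lambda_0}\right) \leq M\!\left(t,x, \frac{\xi}{\lambda_0}\right) \in L^1(\Omega_T).
\]
As $|\xi|$ is finite a.e., $\xi - \xi^n \to 0$ a.e., so the Dominated Convergence Theorem yields $\int_{\Omega_T} M(t,x,(\xi-\xi^n)/\lambda_0) \diff t \diff x \to 0$.

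Next, each $\xi^n$ is bounded and $\Omega_T$ has finite measure, so by the standard coordinate-wise construction there exist $\mathbb{R}^d$-valued simple functions $s_{n,k}$ with $|s_{n,k}| \leq \sqrt{d}\,n$ and $s_{n,k} \to \xi^n$ a.e. as $k \to \infty$. The pointwise bound $|(s_{n,k}-\xi^n)/\lambda_0| \leq (1+\sqrt{d})\,n/\lambda_0$ combined with Definition~\ref{intro:def_Nfunc}\ref{intro:defNf_control_isotropic} gives
\[
M\!\left(t,x,\frac{s_{n,k}-\xi^n}{\lambda_0}\right) \leq m_2\!\left(\frac{(1+\sqrt{d})\,n}{\lambda_0}\right),
\]
which is an $(t,x)$-independent constant that is integrable over the finite-measure set $\Omega_T$. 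A second application of DCT gives, for each fixed $n$, $\int_{\Omega_T} M(t,x,(s_{n,k}-\xi^n)/\lambda_0) \diff t \diff x \to 0$ as $k \to \infty$. I then pick $k_n$ so that this integral is smaller than $1/n$ and set $\xi_n := s_{n,k_n}$. By convexity of $M(t,x,\cdot)$,
\[
M\!\left(t,x,\frac{\xi_n-\xi}{2\lambda_0}\right) \leq \tfrac{1}{2} M\!\left(t,x,\frac{\xi_n-\xi^n}{\lambda_0}\right) + \tfrac{1}{2} M\!\left(t,x,\frac{\xi^n-\xi}{\lambda_0}\right),
\]
and integrating over $\Omega_T$ shows $\xi_n \armus{M} \xi$ with modular constant $\lambda := 2\lambda_0$.

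The main (mild) subtlety, which I would flag as the key step rather than a genuine obstacle, is that $M$ is not assumed monotone in $|\xi|$, so one cannot directly bound $M(t,x,\xi - \xi^n)$ by $M(t,x,\xi)$ in general. This is circumvented precisely because the truncation forces $\xi - \xi^n$ to equal either $0$ or $\xi$ pointwise. Everything else reduces to two applications of the Dominated Convergence Theorem and a diagonal extraction.
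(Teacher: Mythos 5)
Your argument is correct. Note that the paper itself states Lemma \ref{intro:dens_simpl_fcn} without proof, invoking it as the standard analogue of the classical Lebesgue-space fact, and your truncation--plus--simple-approximation scheme (with the two applications of dominated convergence, the pointwise domination $M(t,x,(\xi-\xi^n)/\lambda_0)\leq M(t,x,\xi/\lambda_0)$ being valid because $\xi-\xi^n$ equals $\xi\,\mathds{1}_{|\xi|>n}$, the bound via $m_2$ from \ref{intro:defNf_control_isotropic} on the finite-measure set $\Omega_T$, and the convexity splitting with $\lambda=2\lambda_0$) is exactly the standard argument being invoked; as a bonus, your construction can also be arranged to give $\xi_n\to\xi$ a.e., which is the form in which the lemma is actually used in the proof of Theorem \ref{res:approx_theorem}.
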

\noindent Due to Vitali Convergence Theorem (cf. \cite[Exercise 15, Section 6.1]{folland2013real}), we have the following characterization of modular convergence and its corollary.
\begin{thm}\label{intro:vitali_for_mo}
Let $\{\xi_n \}_{n \in \N} \subset \mus$ and $\xi \in \mus$. Then, $\xi_n \armus{M} \xi$ if and only if the following hold:
\begin{enumerate}[label=(V\arabic*)]
\item $\{\xi_n \}_{n \in \N}$ converges to $\xi$ in measure, 
\item $\left\{M\left(t,x,\frac{\xi_n}{\lambda}\right) \right\}_{n \in \N}$ is uniformly equi-integrable for some $\lambda > 0$.
\end{enumerate}
\end{thm}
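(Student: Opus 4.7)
The plan is to prove both implications by reducing to Vitali's theorem in $L^1$, exploiting convexity of $M$ and the lower bound $m_1(|\xi|) \leq M(t,x,\xi)$ from Definition \ref{intro:def_Nfunc}.

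For the forward direction, suppose modular convergence holds with some $\lambda_0 > 0$. To obtain (V1), I would use the lower bound by $m_1$: on the set $\{|\xi_n - \xi| > \delta\}$ we have $M(t,x,(\xi_n - \xi)/\lambda_0) \geq m_1(\delta/\lambda_0) > 0$, and Chebyshev's inequality gives
\begin{equation*}
|\{|\xi_n - \xi| > \delta\}| \leq \frac{1}{m_1(\delta/\lambda_0)} \intmus{\frac{\xi_n - \xi}{\lambda_0}} \to 0.
\end{equation*}
For (V2), first note that by convexity and $M(t,x,0)=0$, modular convergence with $\lambda_0$ implies modular convergence with any $\lambda \geq \lambda_0$; choose $\lambda$ large enough that $\intmus{\xi/\lambda} < \infty$ (possible since $\xi \in \mus$). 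Then the convexity inequality
\begin{equation*}
M\!\left(t,x, \frac{\xi_n}{2\lambda}\right) \leq \frac{1}{2}\, M\!\left(t,x, \frac{\xi_n - \xi}{\lambda}\right) + \frac{1}{2}\, M\!\left(t,x, \frac{\xi}{\lambda}\right)
\end{equation*}
bounds $M(t,x,\xi_n/(2\lambda))$ by the sum of a sequence converging to $0$ in $L^1$ (hence uniformly integrable) and a single $L^1$ function, giving equi-integrability with $\lambda' = 2\lambda$.

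For the reverse direction, let $\lambda$ be the constant from (V2). Convergence in measure of $\xi_n$ to $\xi$ combined with continuity of $\mathbb{R}^d \ni \eta \mapsto M(t,x,\eta)$ yields
\begin{equation*}
M\!\left(t,x, \frac{\xi_n - \xi}{2\lambda}\right) \to 0 \quad \text{in measure.}
\end{equation*}
Using symmetry of $M$ in $\xi$ and convexity,
\begin{equation*}
M\!\left(t,x, \frac{\xi_n - \xi}{2\lambda}\right) \leq \frac{1}{2}\, M\!\left(t,x, \frac{\xi_n}{\lambda}\right) + \frac{1}{2}\, M\!\left(t,x, \frac{\xi}{\lambda}\right),
\end{equation*}
where the first term is uniformly equi-integrable by (V2), and the second is integrable since, extracting an a.e.\ convergent subsequence and using Fatou, $\intmus{\xi/\lambda} \leq \liminf_n \intmus{\xi_n/\lambda} < \infty$. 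Therefore $M(t,x,(\xi_n - \xi)/(2\lambda))$ is uniformly equi-integrable and tends to zero in measure, and Vitali's theorem yields convergence in $L^1$, which is modular convergence with $2\lambda$.

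The only slightly delicate point is choosing the right modular parameter: the $\lambda$ that gives the hypothesis is not automatically the one that makes all auxiliary integrals finite, but the monotonicity $M(t,x,\cdot/\lambda') \leq M(t,x,\cdot/\lambda)$ for $\lambda' \geq \lambda$ (from convexity and $M(t,x,0)=0$) lets us freely enlarge $\lambda$ in both directions. Everything else is a direct application of Vitali's theorem to the Carathéodory function $M$.
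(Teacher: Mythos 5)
Your argument is correct and follows essentially the route the paper itself indicates: the paper states this theorem without proof, citing the classical Vitali convergence theorem, and your proof is exactly that reduction (Chebyshev with $m_1$ for convergence in measure, convexity splittings to transfer equi-integrability between $\xi_n$, $\xi$ and $\xi_n-\xi$, and Vitali for the converse), with the parameter-enlargement remark handling the only delicate bookkeeping. The single loose phrase is deducing convergence in measure of $M\left(t,x,\frac{\xi_n-\xi}{2\lambda}\right)$ from ``continuity of $M(t,x,\cdot)$''; this needs uniformity over $(t,x)$, which is supplied by the bound $M(t,x,\eta)\leq m_2(|\eta|)$ from Definition \ref{intro:def_Nfunc} (or by the standard a.e.-convergent subsequence argument), so the step stands.
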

\begin{cor}\label{intro:hold_conv_inmod}
Let $\{\varphi_j \}_{j \in \N} \subset \mus$ and $\{\phi_j\}_{j\in \N} \subset L_{M^*}(\Omega_T)$. Suppose that $\varphi_j \armus{M} \varphi$ and $\phi_j \armus{M^*} \phi$. Then, $\varphi_j \, \phi_j \to \varphi \, \phi$ in $L^1(\Omega_T)$.
\end{cor}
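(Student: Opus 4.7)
The plan is to apply the classical Vitali Convergence Theorem to the scalar sequence $\{\xi_n \cdot \eta_n\}_{n \in \N}$ on the finite-measure domain $\Omega_T$. This reduces the corollary to two verifications: that $\xi_n \cdot \eta_n$ converges in measure to $\xi \cdot \eta$, and that the family $\{\xi_n \cdot \eta_n\}_{n \in \N}$ is uniformly equi-integrable on $\Omega_T$.

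For convergence in measure, Theorem \ref{intro:vitali_for_mo} applied separately to each modularly convergent sequence yields $\xi_n \to \xi$ and $\eta_n \to \eta$ in measure on $\Omega_T$. Since $\Omega_T$ has finite Lebesgue measure, a standard subsequence argument (every subsequence admits a further sub-subsequence along which both factors converge a.e., hence so does their pointwise product) propagates convergence in measure through the dot product, giving $\xi_n \cdot \eta_n \to \xi \cdot \eta$ in measure.

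The core of the argument is equi-integrability, which I would extract from the pointwise Young inequality. By Theorem \ref{intro:vitali_for_mo} I fix $\lambda, \mu > 0$ such that $\{M(\cdot,\cdot,\xi_n/\lambda)\}_{n \in \N}$ and $\{M^*(\cdot,\cdot,\eta_n/\mu)\}_{n \in \N}$ are both equi-integrable in $L^1(\Omega_T)$. The pointwise Young inequality $\zeta \cdot \theta \leq M(t,x,\zeta) + M^*(t,x,\theta)$, which is immediate from Definition \ref{intro:def_compl_func}, applied to the scaled pair $(\xi_n/\lambda,\eta_n/\mu)$ and then multiplied by $\lambda\mu$, gives
$$\xi_n \cdot \eta_n \leq \lambda\mu\Bigl[M\bigl(t,x,\xi_n/\lambda\bigr) + M^*\bigl(t,x,\eta_n/\mu\bigr)\Bigr] \quad \text{a.e.\ in } \Omega_T.$$
The same bound with $\eta_n$ replaced by $-\eta_n$, together with the symmetry $M^*(t,x,-\theta) = M^*(t,x,\theta)$ inherited from (M1) of Definition \ref{intro:def_Nfunc}, upgrades this to the absolute-value version $|\xi_n \cdot \eta_n| \leq \lambda\mu\bigl[M(t,x,\xi_n/\lambda) + M^*(t,x,\eta_n/\mu)\bigr]$. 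Since the sum of two equi-integrable families is equi-integrable and domination preserves equi-integrability, $\{\xi_n \cdot \eta_n\}_{n \in \N}$ is equi-integrable, and Vitali's theorem delivers $\xi_n \cdot \eta_n \to \xi \cdot \eta$ in $L^1(\Omega_T)$.

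I do not foresee a genuine obstacle: the only bookkeeping to watch is that the scaling constants $\lambda,\mu$ provided by (V2) are independent of $n$, which is exactly the content of Theorem \ref{intro:vitali_for_mo}, and that the absolute value is handled via the symmetry of $M^*$ inherited from (M1). Everything else is a direct consequence of Vitali's theorem.
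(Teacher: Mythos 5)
Your proposal is correct and follows essentially the same route as the paper: convergence in measure of the products, uniform equi-integrability via the pointwise Young inequality with the scalings supplied by Theorem \ref{intro:vitali_for_mo}, and Vitali's theorem to conclude $L^1$ convergence. Your handling of two separate constants $\lambda,\mu$ and of the absolute value via the symmetry of $M^*$ is slightly more careful than the paper's (which takes $\lambda=\max(\lambda_1,\lambda_2)$), but the argument is the same.
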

\begin{proof}
By Theorem \ref{intro:vitali_for_mo}, $\varphi_j \to \varphi$ and $\phi_j \to \phi$ in measure, and so $\varphi_j \cdot \phi_j \to \varphi \cdot \phi$ also in measure. To conclude, we have to prove uniform integrability of $\{\varphi_j \cdot \phi_j \}$. However, by Young's inequality, for any $Q \subset \Omega_T$:
\begin{equation}\label{Young_applied_to_prove_UI}
\int_Q \frac{\varphi_j(t,x) \cdot \phi_j(t,x)}{\lambda} \diff t \diff x \leq
\int_Q M\left(t,x, \frac{\varphi_j(t,x)}{\lambda}\right) \diff t \diff x
+
\int_Q M^*\left(t,x, \frac{\phi_j(t,x)}{\lambda}\right) \diff t \diff x.
\end{equation}
Again, Theorem \ref{intro:vitali_for_mo} implies existence of $\lambda_1, \lambda_2 > 0$ such that sequences $\left\{M\left(t,x, \frac{\varphi_j(x)}{\lambda_1}\right)  \right\}$ and $\left\{M^*\left(t,x, \frac{\phi_j(x)}{\lambda_2}\right)  \right\}$ are uniformly integrable. Taking $\lambda = \max(\lambda_1, \lambda_2)$ in \eqref{Young_applied_to_prove_UI}, we conclude the proof. 
\end{proof}
\noindent Finally, we discuss some compactness results allowing to extract converging subsequences.
\begin{Def}[Subspace $E_M(\Omega_T)$]
$E_M(\Omega_T)$ is a closure of bounded functions in the norm \eqref{intro:musor_norm}.
\end{Def}
\noindent It is easy to see by approximation with simple functions that $E_M(\Omega_T)$ is separable. Therefore, \cite[Theorem 2.6]{wroblewska2010steady} and the Banach-Alaoglu-Bourbaki Theorem (cf. \cite[Theorem 3.16 and Corollary 3.30]{brezis2010functional}) yields:
\begin{lem}\label{intro:weak_compactness}
We have the following duality characterization $\left(E_M(\Omega_T)\right)^* = L_{M^*}(\Omega_T)$. In particular, if $\{\xi_n\}_{n \in N}$ is a bounded sequence in $L_{M^*}(\Omega_T)$, it has a weakly-$\ast$ converging subsequence.
\end{lem}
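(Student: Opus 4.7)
The plan is to first establish the duality $(E_M(\Omega_T))^* = L_{M^*}(\Omega_T)$ and then combine it with separability of $E_M(\Omega_T)$ and the Banach--Alaoglu--Bourbaki theorem to deduce weak-$\ast$ sequential compactness of bounded sets in $L_{M^*}(\Omega_T)$.

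For the embedding $L_{M^*}(\Omega_T) \hookrightarrow (E_M(\Omega_T))^*$, each $\eta \in L_{M^*}(\Omega_T)$ induces a functional $\Phi_\eta(\xi) := \int_{\Omega_T} \xi \cdot \eta \, \diff t \diff x$ which is bounded on $L_M(\Omega_T)$, and hence on $E_M(\Omega_T)$, by the generalised H\"older inequality of Lemma \ref{intro:Hold_Mus_inequality}. The reverse direction, representing every $\Phi \in (E_M(\Omega_T))^*$ by some $\eta \in L_{M^*}(\Omega_T)$, is the main obstacle. I would define a set function $\nu(A) := \Phi(\mathbf{1}_A)$, which is admissible because \ref{intro:defNf_control_isotropic} in Definition \ref{intro:def_Nfunc} gives $\int_{\Omega_T} M(t,x,\mathbf{1}_A/\lambda)\,\diff t \diff x \leq m_2(1/\lambda)|A|$, so $\mathbf{1}_A \in E_M(\Omega_T)$ and $\|\mathbf{1}_A\|_{L_M}\to 0$ as $|A|\to 0$. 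Continuity of $\Phi$ then forces $\nu$ to be countably additive and absolutely continuous with respect to Lebesgue measure, so Radon--Nikodym yields $\eta \in L^1(\Omega_T)$ with $\Phi(\mathbf{1}_A) = \int_A \eta\, \diff t \diff x$, and linearity plus density of simple (hence bounded) functions extends this to all $\xi \in E_M(\Omega_T)$. To finally place $\eta$ in $L_{M^*}(\Omega_T)$, the idea is to truncate $\eta_n := \eta\,\mathbf{1}_{|\eta|\leq n} \in L^\infty(\Omega_T)$, select via the Fenchel identity measurable $\xi_n$ with $M(t,x,\xi_n) + M^*(t,x,\eta_n) = \xi_n \cdot \eta_n$ pointwise, apply $\Phi$ to an appropriately rescaled $\xi_n \in E_M(\Omega_T)$, and combine with the Young-type estimate \ref{intro:Hold_Mus_inequality_Y} to extract a modular bound $\int_{\Omega_T} M^*(t,x,\eta_n/\lambda)\,\diff t \diff x \leq 1$ uniform in $n$, with $\lambda$ depending only on $\|\Phi\|$; monotone convergence in $n$ then promotes it to $\eta$ itself.

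Once the duality is in place, separability of $E_M(\Omega_T)$ follows from its very definition combined with density of rational-coefficient simple functions, supported on a fixed countable generating algebra of $\Omega_T$, inside the bounded functions under the $L_M$-norm; this density uses $M(t,x,\xi) \leq m_2(|\xi|)$ from \ref{intro:defNf_control_isotropic} together with dominated convergence on the bounded set $\Omega_T$. The Banach--Alaoglu--Bourbaki theorem then implies that the closed unit ball of $(E_M(\Omega_T))^* = L_{M^*}(\Omega_T)$ is weakly-$\ast$ sequentially compact, so any bounded sequence in $L_{M^*}(\Omega_T)$ admits a weakly-$\ast$ convergent subsequence.
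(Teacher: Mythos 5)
Your argument is correct, but it is more self-contained than what the paper actually does: for the duality $(E_M(\Omega_T))^*=L_{M^*}(\Omega_T)$ the paper simply cites Theorem 2.6 of Wr\'oblewska's work, notes in one line that $E_M(\Omega_T)$ is separable by approximation with simple functions, and then invokes the Banach--Alaoglu--Bourbaki theorem (in its sequential form for separable preduals); your second half is exactly this argument. What you add is a sketch of the classical Riesz-representation proof of the duality itself, via $\nu(A)=\Phi(\mathbf{1}_A)$, Radon--Nikodym, truncation of $\eta$, and the Fenchel equality to obtain a uniform modular bound --- this is the standard route and it works, so you are essentially reproving the cited theorem. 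A few details would need care if you wrote it out: the functions here are $\R^d$-valued, so the set function should be defined componentwise, $\nu_i(A)=\Phi(e_i\mathbf{1}_A)$; the extension of the representation formula to all of $E_M(\Omega_T)$ should come \emph{after} the modular bound placing $\eta$ in $L_{M^*}(\Omega_T)$ (before that you only know $\eta\in L^1$, so the integral $\int\xi\cdot\eta$ need not converge for unbounded $\xi\in E_M$); and the pointwise choice of $\xi_n$ attaining the Fenchel identity requires a measurable selection argument, together with the observation that $M^*(t,x,\cdot)\le m_1^*(|\cdot|)$ forces such subgradients at points $|\eta_n|\le n$ to be bounded uniformly in $(t,x)$, so that the rescaled $\xi_n$ indeed lies in $E_M(\Omega_T)$. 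None of these is a genuine obstruction; they are the standard technical points of the cited duality theorem.
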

\noindent For Young functions, we also define Orlicz--Sobolev spaces and we recall their basic properties (cf. \cite[Chapter 8]{adams2003sobolev}). 
\begin{Def}[Orlicz--Sobolev space]\label{intro:Orlicz-Sob_Def}
Let $m: \R \to \R$ be a Young function. We define Orlicz--Sobolev spaces $W^1_0L_m(\Omega_T)$ as
$$
W^1_0L_m(\Omega_T) = \left\{ \xi \in L^1(0,T;W^{1,1}_0(\Omega)):
\| \xi \|_{L_m}, \| \nabla \xi \|_{L_m} < \infty \right\}
$$
and equip it with the norm
$$
\| \xi \|_{W^1L_m} = \| \xi \|_{L_m} + \| \nabla \xi \|_{L_m}.
$$
We also consider its subset $W^1_0E_m(\Omega_T)$:
$$
W^1_0E_m(\Omega_T) = \left\{ \xi \in W^1_0L_m:
\xi \in E_m(\Omega_T) \mbox{ and } \nabla \xi \in E_m(\Omega_T) \right\}
$$
\end{Def}
\begin{lem}[Properties of $W^1_0E_m(\Omega_T)$ and $W^1_0L_m(\Omega_T)$]\label{intro:properties_orlicz_sob_lemma}
Spaces $W^1_0E_m(\Omega_T)$ and $W^1_0L_m(\Omega_T)$ have the following properties:
\begin{enumerate}[label=(P\arabic*)]
\item $W^1_0E_m(\Omega_T)$ is separable,
\item \label{prop_lemma:density} space $C_0^{\infty}((0,T) \times \Omega)$ is dense in $W^1_0E_m(\Omega_T)$ with respect to $\|\cdot\|_{L_m}$ norm,
\item \label{Poinc_ineq} (Poincar\'e inequality, cf. \cite[Corollary 4.1]{chlebicka2019elliptic}) there are constants $c_1$ and $c_2$ such that for all $u \in W^1_0L_m(\Omega_T)$, 
$$
\int_{\Omega_T} m(c_1|u|) \diff t \diff x \leq c_2 \int_{\Omega_T} m(|\nabla u|) \diff t \diff x. 
$$
In particular, $\| \nabla u \|_{L_m}$ is an equivalent norm on $W^1_0L_m(\Omega_T)$.
\end{enumerate}
\end{lem}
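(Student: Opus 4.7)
The plan is to prove the three properties separately, with the density statement \ref{prop_lemma:density} carrying the only substantial work. For separability, I would use the isometric embedding $W^1_0E_m(\Omega_T)\hookrightarrow E_m(\Omega_T)\times (E_m(\Omega_T))^d$ given by $u\mapsto(u,\nabla u)$, equipping the target with the sum-of-norms topology. Since $E_m(\Omega_T)$ is by definition the $\|\cdot\|_{L_m}$-closure of bounded measurable functions, and since simple functions on dyadic cylinders with rational coefficients form a countable family that is $\|\cdot\|_{L_m}$-dense among bounded measurable functions (apply Lemma~\ref{intro:lem_prop_Nfunc}\ref{intro:propNfunct:item6} at every scale $\lambda>0$), $E_m(\Omega_T)$ is separable; products and subspaces of separable metric spaces inherit separability.

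For \ref{prop_lemma:density} I plan the standard three-step approximation: truncation, cutoff, mollification. Given $u\in W^1_0E_m(\Omega_T)$, first replace $u$ by $T_k(u)=\max(-k,\min(k,u))$, whose weak gradient is $\chi_{\{|u|<k\}}\nabla u$; the memberships $u,\nabla u\in E_m(\Omega_T)$ amount to integrability of $M(t,x,\cdot/\lambda)$ at every scale $\lambda>0$ and so permit a dominated-type argument in the Luxemburg norm, yielding $T_k(u)\to u$ and $\nabla T_k(u)\to\nabla u$ in $L_m$ as $k\to\infty$. Next, multiply by a space-time cutoff $\chi_n\in C_0^\infty(\Omega_T)$ converging to $1$; the vanishing trace built into the $W^{1,1}_0$ requirement of Definition~\ref{intro:Orlicz-Sob_Def}, together with the uniform bound $\|T_k(u)\|_\infty\leq k$, controls the boundary-cutoff error as well as the product-rule commutator $T_k(u)\,\nabla\chi_n$. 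Finally, convolve with a standard space-time mollifier $\rho_\varepsilon$; the result lies in $C_0^\infty(\Omega_T)$ for small $\varepsilon$, converges a.e.\ to its target with a uniform $L^\infty$ bound, and Lemma~\ref{intro:lem_prop_Nfunc}\ref{intro:propNfunct:item6} applied at each $\lambda>0$ upgrades this to $L_m$-norm convergence, with the identity $\nabla(\rho_\varepsilon\ast\varphi)=\rho_\varepsilon\ast\nabla\varphi$ handling the gradient. A diagonal choice of parameters $k_n,\chi_n,\varepsilon_n$ then produces the desired $C_0^\infty$-approximation of $u$ in the $W^1L_m$-norm.

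For \ref{Poinc_ineq} I would simply quote \cite[Corollary~4.1]{chlebicka2019elliptic}; the equivalence of $\|\nabla u\|_{L_m}$ with $\|u\|_{W^1L_m}$ on $W^1_0L_m(\Omega_T)$ then follows by applying the integral inequality to rescalings $u/\lambda$ and reading off a comparison of Luxemburg norms via \eqref{intro:musor_norm}. The main obstacle lies in the mollification step of \ref{prop_lemma:density}: convolution intrinsically delivers only modular convergence, and the $L_m$-norm convergence demanded here is precisely the reason one must work in $E_m$ rather than the larger $L_m$, since only in $E_m$ does a.e.\ convergence of uniformly bounded sequences upgrade to norm convergence through Lemma~\ref{intro:lem_prop_Nfunc}\ref{intro:propNfunct:item6} applied at arbitrary scale.
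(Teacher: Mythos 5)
Your items (P1) and (P3) are essentially fine (the paper itself does not prove this lemma but cites \cite[Chapter 8]{adams2003sobolev} and \cite[Corollary 4.1]{chlebicka2019elliptic}; your separability embedding and the rescaling deduction of the norm inequality from the modular one are standard). The genuine gap is in your proof of \ref{prop_lemma:density}, at the spatial cutoff step. You assert that the $W^{1,1}_0$-trace condition in Definition \ref{intro:Orlicz-Sob_Def} together with $\|T_k(u)\|_\infty\le k$ ``controls'' the commutator $T_k(u)\,\nabla\chi_n$, but no mechanism is given, and the two facts you invoke do not suffice: $\nabla\chi_n$ has size $\sim n$ on a boundary layer of measure $\sim 1/n$, so the crude bound gives $\int m\bigl(|T_k(u)\nabla\chi_n|/\lambda\bigr)\lesssim m(Ckn/\lambda)/n$, which diverges for superlinear $m$. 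To make the cutoff argument work one must exploit the vanishing trace quantitatively, e.g.\ by a Hardy-type/slicing argument near a Lipschitz boundary, exactly as the paper does in the (harder, $x$-dependent) Lemma \ref{proof:uniqueness:crucial_lemma}: flatten the boundary, write $|u(s,x)|\le\int_0^{1/n}|\nabla u|\,dr$, apply Jensen, and conclude because $\nabla u\in E_m(\Omega_T)$ gives $\int_{\Omega_T}m(|\nabla u|/\mu)\,dt\,dx<\infty$ for \emph{every} $\mu>0$, so the resulting boundary-layer integral vanishes at every Luxemburg scale. Note this also requires boundary regularity (Lipschitz/segment property), which your argument never invokes. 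The cleaner classical route (Gossez/Adams) avoids the commutator altogether: extend by zero (legitimate thanks to the spatial $W^{1,1}_0(\Omega)$ trace), push the support inward by translations/dilations on a star-shaped decomposition, then mollify, using that translations are norm-continuous on $E_m$.

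A secondary over-reach of the same kind occurs in your mollification step: Lemma \ref{intro:lem_prop_Nfunc} \ref{intro:propNfunct:item6} requires a uniform $L^\infty$ bound, which the gradient $\nabla(\chi_n T_k(u))$ does not have, so it cannot by itself upgrade $\rho_\varepsilon\ast\nabla(\chi_n T_k(u))\to\nabla(\chi_n T_k(u))$ to $L_m$-norm convergence. This one is easily repaired because $m$ is here a $(t,x)$-independent Young function: by Jensen, convolution with a probability kernel is a contraction for the modular, hence for the Luxemburg norm, so approximating the gradient in $E_m$ by bounded functions and applying \ref{intro:propNfunct:item6} to the bounded part gives the claim. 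The cutoff commutator, however, remains a real gap as written.
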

\subsection{Main result} We start with assumptions on $\mathcal{N}$-function $M$ and operator $A$.\\

\begin{Ass}[Assumptions on $M$]\label{intro:ass_on_M} We assume that $M:\Omega_T  \times \R^d \to \mathbb{R}$ is an $N$-function. Moreover, we assume that there is a function {$\Theta:(0,T) \times [0,1] \times [0,\infty) \to [0,\infty)$}, which is nondecreasing with respect to the second and the third variable, such that
$$
\forall{{ C>1}} ~ \forall{\delta_0 > 0} ~\exists{R > 0} \mbox{ such that for a.e. $t \in (0,T)$ and all $\delta \leq \delta_0$ there holds } \Theta(t,\delta, C \delta^{-1}) \leq {R}.
$$
This function describes relation between $M(t,x,\xi)$ and $M_Q(t,\xi) = \essinf_{x \in \Omega \cap 5{Q_{\phantom{i}}}} M(t,x,\xi)$, where $Q \subset \mathbb{R}^d$ is an arbitrary cube and $5{Q_{\phantom{i}}}$ is a cube with the same center as $Q$ with five times longer edge. More precisely, we assume that there exists $\xi_0 \in \R$ and $\delta_0 > 0$ such that for every cube $Q \subset \mathbb{R}^d$ with edge $\delta \in (0, \delta_0)$ and all $\xi \in \R^d$ with $|\xi|>\xi_0$ we have
\begin{equation}\label{intro:our_ass_on_M}
\frac{M(t,x,\xi)}{M_{Q}^{**}(t,\xi)} \leq \Theta(t,\delta,|\xi|),
\end{equation}
where $M_{Q}^{**}$ is the second convex conjugate to $M_Q$, see Definition \ref{intro:def_compl_func}.
\end{Ass}
{ \noindent We remark that Assumption \ref{intro:ass_on_M} mimics the one made in \cite{chlebicka2019parabolic}, namely
$$
\frac{M(t,x,\xi)}{M_{Q,I}^{**}(\xi)} \leq \Theta(\delta,|\xi|),
$$
where $M_{Q,I}(\xi) = \essinf_{x \in \Omega \cap 3Q, t \in I \cap (0,T)} M(t,x,\xi)$, $Q$ is a cube with edge of length $\delta$, $I$ is a subinterval of $\R$ with $|I| \leq \delta$ and function $\Theta$ satisfies:
$$
\forall{C>0} ~ \forall{\delta_0 > 0} ~\exists{R > 0} \mbox{ such that for a.e. $t \in (0,T)$ and all $\delta \leq \delta_0$ there holds } \Theta(\delta, C \delta^{-d}) \leq {R}.
$$
We also note that our assumption is equivalent to condition (A1') in \cite[Definition 4.1.1]{MR3931352} while the latter to the condition (A1-n) in \cite[Chapter 7.3]{MR3931352}.\\

\noindent On the one hand, the relaxed regularity in time allows for $N$-functions which are merely measurable in time. On the other hand, we need to control the quantity $\Theta(t, \delta, C\,\delta^{-1})$ rather than $\Theta(\delta, C\, \delta^{-d})$ which results in better exponents regimes for some well-known examples of $N$-functions, see Example \ref{ex:Nfunctions}. This improvement is based on the observation that in the approximation result one needs to approximate in the modular topology functions of the form 
$$
\nabla(T_k(u) + \varphi) \mbox{ where } \nabla u \in L_M(\Omega_T), \, \varphi \in C_c^{\infty}(\Omega_T)
$$
The observation described above can be easily implemented in the previous works on this topic cf. \cite{chlebicka2018well,chlebicka2019parabolic}.\\
}
\begin{rem}\label{rem:cond:ass_isotropic_case}
In the particular case of an isotropic $N$-function $M(t,x,|\xi|)$, Assumption \ref{intro:ass_on_M} boils down to existence of the function {$\Theta: (0,T) \times [0,1] \times [0,\infty) \to [0,\infty)$} which is nondecreasing with respect to second and third variable such that
\begin{equation}\label{cond:ass_isotropic_case}
\limsup_{\delta \to 0^+} \Theta(t, \delta, C\delta^{-1}) \mbox{ is bounded uniformly in time } t \in (0,T)
\end{equation}
and
$$
\frac{M(t,x,r)}{M(t,y,r)} \leq \Theta(t,|x-y|,r). 
$$
See \cite[Lemma A.4]{chlebicka2019parabolic} for the proof.
\end{rem}
{ 
\begin{Ex}\label{ex:Nfunctions} We list here $N$-functions satisfying Assumptions \ref{intro:ass_on_M}. For the proof, we refer to Appendix \ref{app:example_Nfunctions}.
\begin{enumerate}[label=(E\arabic*)]
\item \label{nfunc_ex_var_exp} $M(t,x,\xi) = |\xi|^{p(t,x)}$ with $1<p_{-} \leq p(t,x) \leq p_{+} < \infty$ and $p(t,x) \in L^{\infty}(0,T; \Clog(\Omega))$. Here, $\Clog(\Omega)$ is the space of log-H\"older continuous functions on $\Omega$, i.e. functions $v: \Omega \to \R$ such that
$$
|v(x) - v(y)| \leq -\frac{C}{\log|x-y|}
$$
for all $x, y \in \Omega$ and some constant $C$. Note that only very low regularity of $p(t,x)$ in time is required.
\item $M(t,x,\xi) = |\xi|^{p(t,x)} + a(t,x)\, |\xi|^{q(t,x)}$ where
\begin{itemize}
\item $1 < p_{-} \leq p(t,x) < p^+ <\infty$, $1 < q_{-} \leq q(t,x) < q^+ <\infty$,
\item $p(t,x), q(t,x) \in L^{\infty}(0,T; \Clog(\Omega))$, \item $a(t,x) \in L^{\infty}(0,T; C^{\alpha}(\Omega))$ for some $\alpha \in (0,1)$ and $a \geq 0$,
\item
$
q(t,x) - p(t,x) \leq \alpha.
$
\end{itemize}
Here, $C^{\alpha}(\Omega)$ is the space of $\alpha$-H\"older continuous functions on $\Omega$. We stress that only very low regularity of $p(t,x)$ and $q(t,x)$ in time is required. We also observe that for $p_{-} < d$, our admissible regime of exponents is better than $q(t,x) - p(t,x) \leq \frac{\alpha\,p_{-}}{d}$ known from \cite{chlebicka2019parabolic}.
\end{enumerate}
\end{Ex}
}
\begin{Ass}[Assumptions on $A$]\label{intro:ass_on_A} We assume that $A:\Omega_T  \times \R^d \to \mathbb{R}^d$ satisfies:
\begin{enumerate}[label=(A\arabic*)]
\item \label{intro:ass_on_A:continuity}$A$ is a Carath\'eodory's function, i.e. for a.e. $(t,x) \in \Omega_T$, map $\mathbb{R}^d \ni \xi \mapsto A(t,x,\xi)$ is continuous and for all $\xi \in \mathbb{R}^d$, map $\Omega_T \ni (t,x) \mapsto A(t,x,\xi)$ is measurable,
\item \label{intro:ass_on_A:coercgr}(coercivity and growth bound) there is a constant $c$ and function $h \in L^{\infty}(\Omega_T)$ such that for all $\xi \in \R^d$ and a.e. $(t,x) \in \Omega_T$:
$$
M(t,x,\xi) + M^*(t,x,A(t,x,\xi)) \leq c \, A(t,x,\xi)\cdot \xi + h(t,x),
$$
\item \label{intro:assumA_mono}(monotonicity) for all $\eta,\xi \in \R^d$ and a.e. $(t,x) \in \Omega_T$:
$$
(A(t,x,\xi) - A(t,x,\eta)) \cdot (\xi - \eta) \geq 0,
$$
\item  \label{intro:assumA_vanish} for a.e. $(t,x) \in \Omega_T$ we have $A(t,x,0) = 0$.
\end{enumerate} 
\end{Ass}
\begin{rem}
In classical papers, condition \ref{intro:assumA_vanish} could be deduced from coercivity and growth bounds. Here, \ref{intro:ass_on_A:coercgr} implies only that
$$
0 \leq M^*(t,x,A(t,x,0)) \leq h(t,x).
$$
We believe that \ref{intro:assumA_vanish} can be waived. Nevertheless, we make this assumption as it is natural and it simplifies many technical computations.
\end{rem}
{
\begin{Ex}\label{ex:operatorsA} We list here functions $\mathcal{A}$ corresponding to $N$-functions in Example \ref{ex:Nfunctions} which satisfy Assumptions \ref{intro:ass_on_A}. For the proof, we refer to Appendix \ref{app:example_A}.
\begin{enumerate}[label=(F\arabic*)]
\item\label{exA:ptx} $A(t,x,\xi) = |\xi|^{p(t,x)-2} \xi$ leads to the equation with $p(t,x)$-Laplacian
$$
u_t(t,x) = \DIV \left[|\nabla u(t,x)|^{p(t,x)-2}\,\nabla u(t,x)\right] + f(t,x)
$$
and the governing $N$-function $M(t,x,\xi)$ is given by \ref{nfunc_ex_var_exp} in Example \ref{ex:Nfunctions}. Such problems have been considered recently for instance in \cite{MR4074602,MR4139121} under assumption that $p(t,x)$ is log-H\"older continuous jointly in $t$ and $x$. In our setting, we only need $p(t,x) \in L^{\infty}(0,T;\Clog(\Omega))$.
\item\label{exA:double_phase} $A(t,x,\xi) = |\xi|^{p(t,x)-2}\,\xi + a(t,x)\, \,|\xi|^{q(t,x)-2}\,\xi$ leads to the double phase problem
$$
u_t(t,x) = \DIV \left[|\nabla u(t,x)|^{p(t,x)-2}\,\nabla u(t,x) + a(t,x)\,|\nabla u(t,x)|^{q(t,x)-2}\,\nabla u(t,x) \right] + f(t,x).
$$
Such problems were studied with variational methods \cite{MR3073153,MR4074614} but mostly with constant or only $x$-dependent exponents. The case of $p(t,x)$ and $q(t,x)$ which are log-H\"older continuous jointly in $t$ and $x$ was studied in \cite{chlebicka2019parabolic}. Our theory requires only $p(t,x), q(t,x) \in L^{\infty}(0,T;\Clog(\Omega))$.
\end{enumerate}
\end{Ex}
}
\begin{lem}\label{intro:lem_bound_on_A}
Let $A$ satisfy Assumption \ref{intro:ass_on_A}. Then, for every $K>0$, there exists a constant $C(K)$ depending on $K$ such that $|A(t,x,\xi)| \leq C(K)$ for a.e. $(t,x) \in \Omega_T$ and all $\xi \in \mathbb{R}^d$ fulfilling $|\xi| \leq K$.
\end{lem}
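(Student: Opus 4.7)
The plan is to isolate $M^*(t,x,A(t,x,\xi))$ in the coercivity--growth bound \ref{intro:ass_on_A:coercgr} by means of a weighted Young inequality, then invert the resulting modular bound to a pointwise bound on $|A|$ via the uniform upper control $M(t,x,\cdot) \leq m_2(|\cdot|)$ from \ref{intro:defNf_control_isotropic}, and finally upgrade ``for each fixed $\xi$, a.e.\ in $(t,x)$'' to a common null set using the Carath\'eodory property \ref{intro:ass_on_A:continuity}.

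For the first step I fix $\alpha \in (0,1)$ and apply the pointwise Young inequality (immediate from Definition \ref{intro:def_compl_func}) with the split $c\,A\cdot\xi = (c\xi/\alpha)\cdot(\alpha A)$, obtaining
$$
c\,A(t,x,\xi)\cdot\xi \;\leq\; M\bigl(t,x, c\xi/\alpha\bigr) + M^*\bigl(t,x,\alpha A(t,x,\xi)\bigr) \;\leq\; M\bigl(t,x, c\xi/\alpha\bigr) + \alpha\, M^*\bigl(t,x, A(t,x,\xi)\bigr),
$$
where the last inequality uses $M^*(t,x,0)=0$ together with convexity of $M^*$ in the last slot (Lemma \ref{intro:lem_prop_Nfunc} \ref{intro:propNfunct:item3}). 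Plugging this into \ref{intro:ass_on_A:coercgr} and choosing $\alpha = 1/2$ absorbs the $M^*$ term on the right, yielding
$$
\tfrac{1}{2}\, M^*\bigl(t,x, A(t,x,\xi)\bigr) \;\leq\; M\bigl(t,x, 2c\xi\bigr) + h(t,x).
$$
For $|\xi|\leq K$ the bound \ref{intro:defNf_control_isotropic} together with $h\in L^\infty(\Omega_T)$ then gives the uniform estimate $M^*(t,x, A(t,x,\xi)) \leq 2 m_2(2cK) + 2\|h\|_\infty =: D(K).$

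To turn $D(K)$ into a bound on $|A|$ itself, I dualise the upper Young control: since $M(t,x,\eta) \leq m_2(|\eta|)$ pointwise, maximising along the ray $\eta = s\,A/|A|$, $s\geq 0$, in the definition of $M^*$ gives the reverse inequality $M^*(t,x, A) \geq m_2^*(|A|)$. By Lemma \ref{intro:lem_prop_Nfunc} \ref{intro:propNfunct:item2}, $m_2^*$ is a Young function and is therefore superlinear at infinity, so the sublevel set $\{s\geq 0 : m_2^*(s) \leq D(K)\}$ is bounded; its supremum is the desired constant $C(K)$, giving $|A(t,x,\xi)|\leq C(K)$ for each fixed $\xi\in\overline{B_K(0)}$ outside a $\xi$-dependent null set.

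The last detail is making this null set independent of $\xi$. Because $A(t,x,\cdot)$ is continuous for a.e.\ $(t,x)$ by \ref{intro:ass_on_A:continuity}, it suffices to verify the estimate on a countable dense subset $\{\xi_j\}\subset \overline{B_K(0)}$ and take the union of the corresponding null sets, which is still null; the bound then extends to every $|\xi|\leq K$ on a common set of full measure by continuity in $\xi$. The only genuinely nontrivial ingredient is the Young-inequality absorption used in the first step; the remaining steps are routine bookkeeping.
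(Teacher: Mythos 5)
Your proof is correct, and it shares the paper's basic strategy -- bound $M^*(t,x,A(t,x,\xi))$ through \ref{intro:ass_on_A:coercgr}, compare $M^*$ from below with the Young function $m_2^*$ coming from \ref{intro:defNf_control_isotropic} by duality, and use superlinearity -- but the middle step is genuinely different. The paper does not absorb anything: it estimates $c\,A\cdot\xi\le c\,|A|\,|\xi|$, splits off the trivial case $|A|\le 1$, divides by $|A|$, and concludes from the fact that $s\mapsto m(s)/s$ is nondecreasing and tends to infinity (\ref{intro:propNfunct:item1} of Lemma \ref{intro:lem_prop_Nfunc}) that $m(|A|)/|A|\le cK+\|h\|_\infty$ already forces $|A|\le C(K)$. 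You instead remove $A$ from the right-hand side entirely via the weighted Fenchel--Young inequality and absorption, arriving at the modular bound $m_2^*(|A|)\le 2\,m_2(2cK)+2\|h\|_\infty$ and then using only superlinearity of $m_2^*$. This is equally valid -- the absorption is legitimate because \ref{intro:ass_on_A:coercgr} itself guarantees $M^*(t,x,A(t,x,\xi))<\infty$ pointwise, which your argument implicitly needs -- and it buys a more explicit constant and no case split, at the price of one extra inequality; the paper's route is a line shorter and avoids the finiteness remark. Your closing step, making the exceptional null set independent of $\xi$ by checking a countable dense set of $\xi$'s and using the Carath\'eodory continuity \ref{intro:ass_on_A:continuity}, is more scrupulous than the paper, which tacitly reads \ref{intro:ass_on_A:coercgr} as holding on a common set of full measure; your argument covers either reading, so there is no gap.
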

\begin{proof}
Let $|\xi| \leq K$. Assumption \ref{intro:ass_on_A:coercgr} implies that
\begin{equation}\label{intro:Aisbdd_estimate}
M^*(t,x,A(t,x,\xi)) \leq c \, A(t,x,\xi)\cdot \xi + h(t,x).
\end{equation}
Let $m$ be a Young function such that $m(|\xi|) \leq M^*(t,x,\xi)$ for a.e. $(t,x) \in \Omega_T$ as in point \ref{intro:defNf_control_isotropic} in Definition \ref{intro:def_Nfunc}. If $|A(t,x,\xi)| \leq 1$, the assertion follows by choosing $C(K) \geq 1$. Otherwise, \eqref{intro:Aisbdd_estimate} implies
$$
\frac{m(|A(t,x,\xi)|)}{|A(t,x,\xi)|} \leq c \, |\xi| + \|h\|_{\infty} \leq c\,K + \|h\|_{\infty}.
$$
Since map $s \mapsto \frac{m(s)}{s}$ is nondecreasing (property \ref{intro:propNfunct:item1} in Lemma \ref{intro:lem_prop_Nfunc}) and $m$ is superlinear (property \ref{intro:def_Nfunc_iso_prop3} in Definition \ref{intro:def_Nfunc_iso}), the assertion follows.
\end{proof} 

\noindent Next, we define a function space relevant for the problem \eqref{intro:parabolic_eq} as follows:
$$
V^M_T = \left\{u: \Omega_T \to \R \mbox{ such that } u \in L^1(0,T; W^{1,1}_0(\Omega)), \nabla u \in \mus \mbox{ and } u \in L^{\infty}(0,T; L^2(\Omega))\right\}.
$$
The main results of this paper read:
\begin{thm}[Existence of solutions]\label{mainresult:existence}
Suppose that Assumptions \ref{intro:ass_on_M} and \ref{intro:ass_on_A} are satisfied. Let $\Omega \subset \R^d$ be a bounded Lipschitz domain, $u_0 \in L^{\infty}(\Omega)$ and $f \in L^{\infty}(\Omega)$. Then, there exists $u \in V^M_T(\Omega)$ which is a weak solution to \eqref{intro:parabolic_eq}. More precisely, there exists $u \in V^M_T(\Omega)$ such that $A(t,x,\nabla u) \in L_{M^*}(\Omega_T)$ and for all $\varphi \in C^{\infty}_0([0,T) \times \Omega)$, there holds:
\begin{equation*}
\begin{split}
&-\int_{\Omega_T} u(t,x) \partial_t \varphi(t,x) \diff t \diff x 
- \int_{\Omega} u_0(x)\varphi(0,x) \diff x + \\ 
&\qquad \qquad \qquad \qquad \qquad+
\int_{\Omega_T} A(t,x,\nabla u) \cdot \nabla \varphi(t,x) \diff t \diff x = 
\int_{\Omega_T} f(t,x) \varphi(t,x) \diff t \diff x.
\end{split}
\end{equation*}
In addition, $u$ satisfies the global energy inequality, i.e. for all $t \in [0,T]$ there holds
\begin{equation}\label{theorem:energyinequality}
\frac{1}{2}\int_{\Omega} \left[u^2(t,x) - u_0^2(x) \right] \diff x \leq 
- \int_0^t \int_{\Omega} A(s,x, \nabla u(s,x)) \cdot \nabla u(s,x) \, \diff x \diff s + \int_0^t \int_{\Omega} f(s,x) \,u(s,x) \, \diff x \diff s.
\end{equation}
\end{thm}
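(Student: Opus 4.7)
The plan is to follow the by-now classical scheme for monotone parabolic problems in Musielak--Orlicz spaces (approximate, extract weak limits, identify the nonlinear limit via monotonicity), but to replace every \emph{global} density/approximation step by a \emph{local} one, so that discontinuity of $M$ in the time variable causes no trouble. First I would build approximate solutions $u_\eps$ by regularising the $N$-function, e.g. replacing $M$ by $M_\eps(t,x,\xi)=M(t,x,\xi)+\eps|\xi|^p$ (with $p$ chosen so that a $\Delta_2$-condition holds) and $A$ by a corresponding $A_\eps$ with the same monotonicity and coercivity structure; for such $M_\eps$ classical Galerkin together with Minty's method yields weak solutions $u_\eps\in L^p(0,T;W^{1,p}_0(\Omega))\cap L^\infty(0,T;L^2(\Omega))$. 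Testing the approximate equation with $u_\eps$ and invoking Assumption \ref{intro:ass_on_A} gives the uniform bounds
\begin{equation*}
\|u_\eps\|_{L^\infty(0,T;L^2(\Omega))}+\intmus{\nabla u_\eps}+\intmusc{A_\eps(t,x,\nabla u_\eps)}\le C,
\end{equation*}
independently of $\eps$, together with a uniform bound on $\partial_t u_\eps$ in a negative-order Bochner space coming from the PDE.

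By Lemma \ref{intro:weak_compactness} I can then extract subsequences with $\nabla u_\eps\wstar\nabla u$ in $L_M(\Omega_T)$ and $A_\eps(t,x,\nabla u_\eps)\wstar\chi$ in $L_{M^*}(\Omega_T)$ (and a pointwise a.e.\ limit of $A_\eps(t,x,0)\to 0$ thanks to \ref{intro:assumA_vanish}). Combining the bound on $\partial_t u_\eps$ with the $W^{1,1}_0(\Omega)$ bound coming from \ref{intro:defNf_control_isotropic}, an Aubin--Lions argument gives $u_\eps\to u$ strongly in $L^2(\Omega_T)$, and $u\in C_w([0,T];L^2(\Omega))$. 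Passing $\eps\to 0$ in the approximate weak formulation then yields the limit equation with $\chi$ in place of $A(t,x,\nabla u)$. The remaining, and genuinely hard, task is the identification $\chi=A(t,x,\nabla u)$.

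For this identification I would use the local versions of the energy equality and the Minty trick advertised in the introduction. Fix an arbitrary space-time cylinder $(t_1,t_2)\times Q$ with $Q$ a cube of small edge $\delta$. Testing the approximate equation with a cut-off of $u_\eps$ localised to this cylinder gives a local energy equality; sending $\eps\to 0$ and using the strong $L^2$-convergence of $u_\eps$ yields
\begin{equation*}
\limsup_{\eps\to 0}\int_{t_1}^{t_2}\!\!\int_{Q} A_\eps(t,x,\nabla u_\eps)\cdot\nabla u_\eps \diff x\diff t\le \int_{t_1}^{t_2}\!\!\int_{Q}\chi\cdot\nabla u \diff x\diff t.
\end{equation*}
The monotonicity inequality $(A_\eps(t,x,\nabla u_\eps)-A(t,x,\phi))\cdot(\nabla u_\eps-\phi)\ge 0$ may be tested against a smooth $\phi$ supported in the cylinder; to pass to the limit I need to approximate $\nabla u$ by bounded smooth functions modularly with respect to $M_Q^{**}$. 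This is exactly where Assumption \ref{intro:ass_on_M} enters: on the small cube $Q$ the inequality $M(t,x,\xi)/M_Q^{**}(t,\xi)\le\Theta(t,\delta,|\xi|)$, together with the Lemma \ref{intro:lem_bound_on_A} bound on $A$ on bounded arguments, provides exactly the equi-integrability needed to use Corollary \ref{intro:hold_conv_inmod}. Applying Minty's trick inside each such cylinder and covering $\Omega_T$ by countably many of them identifies $\chi=A(t,x,\nabla u)$ a.e.

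Finally, summing the local energy equalities (or equivalently, testing the limit equation with a Steklov regularisation of $u$ cut off near $t$) produces the global energy inequality \eqref{theorem:energyinequality}. The main obstacle is clearly the identification step: without continuity of $M$ in $t$, one cannot approximate arbitrary elements of $L_M$ by smooth functions modularly, so the whole argument relies on approximating only the actual limit $\nabla u$, on small space cubes at each time slice, using \eqref{intro:our_ass_on_M}; all other steps are essentially quantitative versions of standard monotone-operator machinery.
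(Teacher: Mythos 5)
Your overall scheme (regularize, get uniform bounds, Aubin--Lions, then a \emph{local} energy identity plus a local Minty argument, with Assumption \ref{intro:ass_on_M} used only to approximate the actual limit) is the same as the paper's, but two steps as you describe them would fail. First, the approximation: adding $\eps|\xi|^p$ to $M$ and claiming that ``classical Galerkin yields $u_\eps\in L^p(0,T;W^{1,p}_0(\Omega))$'' only repairs coercivity from below; it does nothing about the growth of $A$ from above, which is governed by $M^*$ and may be far beyond polynomial, so $A_\eps$ need not map $L^p(0,T;W^{1,p}_0(\Omega))$ into its dual and the classical monotone-operator machinery does not apply. The paper instead perturbs the \emph{operator} by $\theta\nabla_\xi m(|\xi|)$ with a Young function $m$ dominating $M$ from above (available by (M4)), so that the regularized problem falls under the isotropic, homogeneous Orlicz existence result of Elmahi--Meskine (Theorem \ref{res:iso_wpt_thm}, Lemma \ref{res:approximation_theta_lem}); some upper regularization of this kind (or a truncation of $A$) is indispensable. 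A related minor point: Aubin--Lions with the available bounds gives strong convergence in $L^1$ and a.e., not in $L^2(\Omega_T)$ (boundedness in $L^\infty(0,T;L^2(\Omega))$ does not give equi-integrability of $|u_\eps|^2$); the paper works with truncations $T_k$, $G_k$ precisely to avoid needing more.

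Second, and this is the crux: your limsup inequality $\limsup_\eps\int\!\!\int A_\eps(\nabla u_\eps)\cdot\nabla u_\eps\le\int\!\!\int\chi\cdot\nabla u$ cannot be obtained merely by ``sending $\eps\to0$ in the approximate local energy equality using strong convergence of $u_\eps$''. Passing to the limit on the approximate side leaves you with terms like $\int G_k(u(t))\psi$, and to convert these into $\int\!\!\int\chi\cdot\nabla(T_k(u)\psi)$ you must prove an energy identity for the \emph{limit} equation $u_t=\DIV\chi+f$, i.e.\ you must justify using (a truncation and cutoff of) $u$ itself as a test function. In this setting, with $M$ discontinuous in time, that is exactly the hard part: it is Lemma \ref{res:local_energy_equality_form}, whose proof needs the modular approximation Theorem \ref{res:approx_theorem} (resting on the key estimate Lemma \ref{res:approx_thm_helpf_lem}, which is where Assumption \ref{intro:ass_on_M} really enters), the extension of $u$ to negative times to handle the initial datum, and the $W^{1,1}$-in-time regularity of the spatially mollified solution. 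In your outline Assumption \ref{intro:ass_on_M} is invoked only for passing to the limit in the monotonicity inequality against a bounded smooth $\phi$ --- but that step needs nothing beyond weak-$\ast$ convergence and Lemma \ref{intro:lem_bound_on_A}; the admissibility of $u$ as a test function is never addressed. Finally, the global inequality \eqref{theorem:energyinequality} cannot be obtained by ``summing the local energy equalities'': removing the spatial cutoff requires killing the term $\int\alpha\cdot\nabla\psi_j\,T_k(u)$, which in the paper is possible only under the additional isotropy hypothesis (Lemma \ref{proof:uniqueness:crucial_lemma}) used for uniqueness, not in the general anisotropic existence theorem. The paper instead derives \eqref{theorem:energyinequality} from the global energy equality of the regularized problems via weak lower semicontinuity of the $L^2$ norm and a monotonicity-based liminf estimate for $\int A(\nabla u_n)\cdot\nabla u_n$.
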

\begin{thm}[Uniqueness of solutions]\label{mainresult:uniqueness}
Let all assumptions of Theorem \ref{mainresult:existence} be satisfied. Moreover, suppose that the $N$-function $M$ is isotropic, i.e. it is of the form $M(t,x,|\xi|)$. Then, weak solution to \eqref{intro:parabolic_eq} is unique and it satisfies the energy equality, i.e. for all $t \in [0,T]$ there holds
\begin{equation}\label{theorem:energyequality}
\frac{1}{2} \int_{\Omega} \left[u^2(t,x) - u_0^2(x) \right] \diff x =
- \int_0^t \int_{\Omega} A(s,x, \nabla u(s,x)) \cdot \nabla u(s,x) \, \diff x \diff s + \int_0^t \int_{\Omega} f(s,x) \,u(s,x) \, \diff x \diff s.
\end{equation}
\end{thm}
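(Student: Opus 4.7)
The plan is twofold: first, to promote the local energy identity obtained in the proof of Theorem \ref{mainresult:existence} to the global energy equality \eqref{theorem:energyequality}; second, to apply this identity to the difference of two candidate weak solutions and use monotonicity of $A$ to conclude uniqueness. Both steps proceed by testing the equation with the truncation $T_k$ of the relevant function (the solution itself in Step 1, the difference of two solutions in Step 2). The role of the isotropy $M(t,x,\xi)=M(t,x,|\xi|)$ is to secure the modular bounds which make these truncations admissible test functions, even though $M$ is merely measurable in time.

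\textbf{Step 1: Energy equality.} For $u \in V^M_T$ with $A(t,x,\nabla u) \in L_{M^*}(\Omega_T)$, the equation identifies $\partial_t u$ with $\DIV A(t,x,\nabla u) + f$ distributionally. I would test the weak formulation with the Lipschitz truncation $T_k(u)$: this is admissible since $T_k(u) \in L^\infty$ vanishes on the boundary and $\nabla T_k(u) = \chi_{\{|u|<k\}}\nabla u \in \mus$, where isotropy yields $M(t,x,\nabla T_k(u)) \leq M(t,x,\nabla u) \in L^1(\Omega_T)$. Setting $S_k(r):=\int_0^r T_k(\sigma)\,\diff\sigma$, a Steklov-averaging time chain rule produces
$$
\int_\Omega S_k(u(t,x))\,\diff x - \int_\Omega S_k(u_0(x))\,\diff x = -\!\int_0^t\!\!\int_{\{|u|<k\}} A(s,x,\nabla u)\cdot \nabla u \,\diff x\,\diff s + \int_0^t\!\!\int_\Omega f\, T_k(u)\,\diff x\,\diff s.
$$
Passing $k\to\infty$ by monotone convergence on the left (using $u \in L^\infty(0,T;L^2(\Omega))$) and, on the right, by the coercivity bound \ref{intro:ass_on_A:coercgr} together with dominated convergence yields \eqref{theorem:energyequality}.

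\textbf{Step 2: Uniqueness and the main obstacle.} Let $u_1, u_2 \in V^M_T$ be two weak solutions with the same data and set $w := u_1 - u_2$. Subtracting the two weak formulations, $w$ solves $\partial_t w = \DIV(A(t,x,\nabla u_1) - A(t,x,\nabla u_2))$ distributionally with $w(0,\cdot)=0$. I would test with $T_k(w)$, which is admissible because isotropy and convexity of $M$ give $M(t,x,\nabla T_k(w)) \leq \tfrac12 M(t,x, 2\nabla u_1) + \tfrac12 M(t,x, 2\nabla u_2)$, so $\nabla T_k(w) \in \mus$. The same Steklov chain rule then yields
$$
\int_\Omega S_k(w(t,x))\,\diff x + \int_0^t\!\!\int_{\{|w|<k\}} \bigl[A(s,x,\nabla u_1) - A(s,x,\nabla u_2)\bigr]\cdot(\nabla u_1-\nabla u_2)\,\diff x\,\diff s = 0,
$$
and by the monotonicity assumption \ref{intro:assumA_mono} both terms vanish, so letting $k\to\infty$ (with $S_k(r) \nearrow r^2/2$) gives $\int_\Omega w^2(t,\cdot)=0$ and hence $u_1 = u_2$. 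The hard part is rigorously justifying the time chain rule underlying both steps in a setting where $\mus$ has no convenient duality and $M$ is only measurable in time, so standard Sobolev--Bochner machinery is unavailable. The plan is to Steklov-average in time: form $w^\eta$, derive the analogous identity for the mollified equation on $(\eta, T-\eta)$, and pass $\eta \to 0$ using the coercivity bound for weak-$*$ compactness of the nonlinear fluxes in $L_{M^*}(\Omega_T)$ and the isotropy of $M$ for modular convergence of $\nabla T_k(w^\eta)$. This realizes the ``retrieval of time regularity from the equation'' emphasized in the introduction.
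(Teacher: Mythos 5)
Your high-level architecture (truncated energy identity plus monotonicity) matches the paper's, but the step you yourself flag as ``the hard part'' is precisely where the proposal breaks down, and the fix you sketch is the one this paper is designed to avoid. Testing with $T_k(u)$ (or $T_k(w)$) is not admissible merely because $T_k(u)$ is bounded, vanishes on the boundary and $\nabla T_k(u)\in\mus$: the weak formulation only allows smooth test functions, so one must approximate $T_k(u)$ by smooth functions whose \emph{gradients converge modularly} in $\mus$. Your plan to obtain the time chain rule by Steklov averaging $u$ (or $w$) in time and then passing $\eta\to 0$ ``using isotropy for modular convergence of $\nabla T_k(w^\eta)$'' fails exactly because $M$ is only measurable in $t$: estimating $M(t,x,(\nabla u)^\eta(t,x))$ by Jensen's inequality produces $M(t,x,\nabla u(s,x))$ for $s$ near $t$, and without continuity of $M$ in time this cannot be controlled by $M(s,x,\nabla u(s,x))$, so uniform integrability of the modulars (hence modular convergence) of the time-averages is unavailable. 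Isotropy does not help here at all — it concerns the dependence on the direction of $\xi$, not on $t$; indeed both of your stated uses of isotropy (the bounds $M(t,x,\nabla T_k(u))\le M(t,x,\nabla u)$ and the convexity estimate for $\nabla T_k(w)$) follow from symmetry and convexity alone. The paper instead mollifies only in \emph{space}, proves modular convergence of the mollified truncations via Theorem \ref{res:approx_theorem} (which rests on Assumption \ref{intro:ass_on_M}, nowhere invoked in your argument), and recovers the missing time regularity from the equation itself (Lemma \ref{res:Sobolev_regularity_solution_molinsp}); the auxiliary time mollification $S^\delta$ in Lemma \ref{res:local_energy_equality_form} acts only on quantities where no modular estimate is needed.

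A second, equally essential omission: because the spatial approximation is inherently local (it requires a compactly supported cut-off $\psi$), this route yields only the \emph{local} energy equality \eqref{res:local_energy_equality}, and the genuine content of the uniqueness proof is the removal of $\psi$. This is Lemma \ref{proof:uniqueness:crucial_lemma}: one must show that $\int_0^t\int_\Omega M\big(s,x,|\nabla\psi_j\, T_k(u)|/C_u\big)\,\diff x\diff s\to 0$ for a suitable sequence of cut-offs, which is proved by a Hardy-type estimate near the Lipschitz boundary ($|u|\lesssim\int_0^{1/j}|\nabla u|$ along lines), a covering by small cubes, Jensen's inequality and Assumption \ref{intro:ass_on_M}; the local change of coordinates (rotation) needed to flatten the boundary is where isotropy of $M$ is really used, since otherwise $\nabla(u\circ\Psi)\in\mus$ could fail. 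Your proposal never addresses the boundary at all, so even granting a local identity you have no route to the global equality \eqref{res:global_energy_equality_without_truncation}, which is what both \eqref{theorem:energyequality} and the uniqueness argument (applied to $\alpha=A(t,x,\nabla u_1)-A(t,x,\nabla u_2)$ with zero data) actually require.
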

\section{Auxillary theory and results}
\subsection{Smooth approximation}\label{res:section_on_mollification}
\noindent In this section we prove that if $u \in V^M_T(\Omega)$, then $u$ can be approximated in the modular topology of the gradients. We formulate this result locally in $\Omega$ but we remark that the similar approach has already been used in \cite[Theorem 3.1]{chlebicka2019parabolic}, where approximation was performed globally for Lipschitz domains $\Omega$ by using a decomposition on star-shaped sets, see \cite[Lemma II.1.3]{galdi2011introduction}.\\

\noindent First, we recall the definition of truncation and mollification operators:
\begin{Def}
[Truncation]\label{res:trunc_def} Function
$$
T_k (s) = \begin{cases}
s & \mbox{if } |s| \leq k,\\
k \frac{s}{|s|} & otherwise,
\end{cases}
$$
is called truncation at level $k$. We also denote by $G_k$ its primitive function, i.e. we set
$$
G_k(s) = \int_0^s T_k(\sigma) \diff \sigma.
$$
\end{Def}
\begin{Def}[Mollification with respect to the spatial variable]\label{res:mol_in_sp}
Let $\eta:\R^d \to \R$ be a standard regularizing kernel, i.e. $\eta$ is a smooth nonnegative function compactly supported in a ball of radius one and fulfills $\int_{\mathbb{R}^d} \eta(x) \diff x = 1$. Then, we set $\eta_{\varepsilon}(x) = \frac{1}{\varepsilon^d} \eta\left(\frac{x}{\varepsilon}\right)$ and for arbitrary $u: \Omega \to \R$ and $\Omega' \Subset \Omega$, we define $u^{\varepsilon}: \Omega' \to \R$ as
$$
u^{\varepsilon}(x) = \int_{\R^d} \eta_{\varepsilon}(x-y) u(y) \diff y.
$$
Furthermore, if $u: \Omega_T \to \mathbb{R}$, then $u^{\varepsilon}$ denotes mollification in space, i.e.
$$
u^{\varepsilon}(t,x) = \int_{\R^d} \eta_{\varepsilon}(x-y) u(t,y) \diff y.
$$
\end{Def}
\begin{Def}[Mollification with respect to time]\label{res:mol_in_ti}
Let $\zeta:\R \to \R$ be a standard regularizing kernel, i.e. $\zeta$ is a smooth nonnegative function compactly supported in a ball of radius one and fulfills $\int_{\mathbb{R}} \zeta(x) \diff x = 1$. Then, we set $\zeta_{\varepsilon}(x) = \frac{1}{\varepsilon} \zeta\left(\frac{x}{\varepsilon}\right)$ and for arbitrary $u: \R \times \Omega \to \R$, we define $S^{\varepsilon}u: \R \times \Omega \to \R$ as
$$
S^{\varepsilon}u(t,x) = \int_{\R} \zeta_{\varepsilon}(t-s) u(s,x) \diff s.
$$
\end{Def}
\noindent For properties of mollified functions, the reader may consult \cite[Appendix C.4]{evans1998partial}. Finally, we formulate the approximative properties of the mollifications defined above, which is the most essential tool used in the paper.
\begin{thm}\label{res:approx_theorem}
Let $\Omega \subset \R^d$, $\psi:\Omega \to \R$ be compactly supported satisfying $0 \leq \psi \leq 1$ and $u \in V^M_T(\Omega)$. Suppose that Assumption \ref{intro:ass_on_M} is satisfied. Then, there exists $\varepsilon_0 > 0$:
\begin{enumerate}[label=(S\arabic*)]
\item $\left(T_k(u^{\varepsilon}) \psi\right)^{\varepsilon} \in L^1(0,T;C_0^{\infty}(\Omega))$ for all $\varepsilon \in (0, \varepsilon_0)$,
\item $T_k(u^{\varepsilon}) \psi \to T_k(u) \psi$ a.e. in $\Omega_T$ and in $L^1(0,T; L^1(\Omega))$ as $\varepsilon \to 0^+$,
\item $\nabla\left(T_k(u^{\varepsilon}) \psi\right)^{\varepsilon} \armus{M} \nabla\left(T_k(u) \psi\right)$ as $\varepsilon \to 0^+$, where the modular convergence $\armus{M}$ is defined in Definition \ref{intro:def_mod_conv}. 
\end{enumerate}
\end{thm}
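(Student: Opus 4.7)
The plan is to verify items (S1)--(S3) in turn, with the real substance lying in (S3). For (S1), the outer mollification by $\eta_\epsilon$ produces a function smooth in $x$, and choosing $\epsilon_0 := \tfrac{1}{2}\dist(\supp\psi,\p\Omega)$ keeps the support inside $\Omega$; the $L^1$-in-time integrability follows from the uniform pointwise bound $|T_k(u^\epsilon)\psi|\leq k$. For (S2), $u^\epsilon \to u$ a.e.\ along a subsequence, so continuity and boundedness of $T_k$ give a.e.\ convergence, and the $L^1$ convergence follows from the Dominated Convergence Theorem with dominating constant $k$.

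For (S3), set $v_\epsilon := \nabla((T_k(u^\epsilon)\psi)^\epsilon) = \eta_\epsilon * \nabla(T_k(u^\epsilon)\psi)$, where the inner gradient decomposes as
$$\nabla(T_k(u^\epsilon)\psi) = T_k'(u^\epsilon)\,\psi\,(\nabla u)^\epsilon + T_k(u^\epsilon)\,\nabla\psi.$$
By Theorem \ref{intro:vitali_for_mo} it is enough to check (i) convergence in measure to $\nabla(T_k(u)\psi)$ and (ii) uniform equi-integrability of $M(t,x, v_\epsilon/\lambda)$ for some $\lambda > 0$. Condition (i) follows from a Vitali argument in $L^1$: $T_k'(u^\epsilon)\to T_k'(u)$ a.e.\ on $\{|u|\neq k\}$ while $\nabla u = 0$ a.e.\ on $\{|u|=k\}$, the dominant $(\nabla u)^\epsilon$ is equi-integrable since $(\nabla u)^\epsilon \to \nabla u$ in $L^1$, and the outer convolution $\eta_\epsilon *$ preserves $L^1$-convergence.

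Condition (ii) is the main difficulty and is where Assumption \ref{intro:ass_on_M} is essential. The key observation is the universal pointwise bound
$$|v_\epsilon(x)| = \bigl|(\nabla\eta_\epsilon)*(T_k(u^\epsilon)\psi)(x)\bigr| \leq k\,\|\nabla\eta_\epsilon\|_{L^1(\R^d)} \leq \frac{Ck}{\epsilon}.$$
For each $x$ I pick a cube $Q=Q(x)$ of edge $\delta = C_0\epsilon$ large enough that $B(x,2\epsilon)\subset 5Q$. Since $|v_\epsilon(x)/2|\leq C/\delta$, Assumption \ref{intro:ass_on_M} yields $\Theta(t,\delta,|v_\epsilon(x)/2|)\leq R$, hence
$$M(t,x, v_\epsilon(x)/2)\leq R\, M_{Q}^{**}(t, v_\epsilon(x)/2).$$
Applying Jensen's inequality to the convex function $M_Q^{**}(t,\cdot)$, first on the outer $\eta_\epsilon*$ and then on $(\nabla u)^\epsilon$, together with $|T_k'(u^\epsilon)\psi|\leq 1$ (so $M_Q^{**}(t,\alpha\xi)\leq M_Q^{**}(t,\xi)$ for $|\alpha|\leq 1$ using $M_Q^{**}(t,0)=0$), the inequality $M_Q^{**}(t,\xi)\leq M(t,z,\xi)$ for $z\in 5Q$, and the bound $m_2$ absorbing the contribution of the bounded term $T_k(u^\epsilon)\nabla\psi$, gives
$$M(t,x,v_\epsilon(x)/2) \leq C_1\int \tilde\eta_\epsilon(x-z)\,M(t,z,\nabla u(z))\,\diff z + C_2,$$
where $\tilde\eta_\epsilon=\eta_\epsilon*\eta_\epsilon$. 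Since $M(\cdot,\cdot,\nabla u)\in L^1(\Omega_T)$, the family of its convolutions with mollifiers is equi-integrable in $\Omega_T$, delivering (ii).

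The principal obstacle is this equi-integrability step. Without Assumption \ref{intro:ass_on_M} one cannot pass from $M_Q^{**}$, on which Jensen is available, back to $M$; and the only reason $\Theta(t,\delta,C/\delta)\leq R$ is usable at exactly this scale is the precise pairing $\delta\sim\epsilon$ together with the pointwise bound $|v_\epsilon|\lesssim 1/\epsilon$ supplied by differentiating through the outer mollification.
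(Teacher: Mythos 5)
Your argument is correct in substance, but it is organized differently from the paper's proof, and the difference is worth recording. The paper does not verify modular convergence via Theorem \ref{intro:vitali_for_mo}; instead it first proves an integrated $\limsup$ bound (Lemma \ref{res:approx_thm_helpf_lem}) for modulars of $\big(\mathds{1}_{|u^\epsilon|\le k}v^\epsilon\psi\big)^\epsilon$ with $v=\nabla u+\varphi$, and then combines this with the density of simple functions in the modular topology (Lemma \ref{intro:dens_simpl_fcn}) and a three-term convexity splitting $A^{n,\epsilon}+B^{n,\epsilon}+C^{n,\epsilon}$, checking the definition of modular convergence directly. You bypass the simple-function approximation entirely: you verify (V1) by an $L^1$/dominated-convergence argument (using $\nabla u=0$ a.e.\ on $\{|u|=k\}$) and (V2) by the pointwise domination $M(t,x,v_\epsilon/\lambda)\le C_1\,(\eta_\epsilon*\eta_\epsilon)*M(\cdot,\cdot,\nabla u/\lambda_0)+C_2$, concluding with equi-integrability of mollifications of an $L^1$ function. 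The hard analytic core is the same in both proofs --- the bound $|v_\epsilon|\lesssim k/\epsilon$ obtained by differentiating the outer mollifier, cubes of edge comparable to $\epsilon$, Assumption \ref{intro:ass_on_M} in the regime $\Theta(t,\delta,C\delta^{-1})\le R$, Jensen through both convolutions, and the return from $M_Q^{**}$ to $M$ on $5Q$ --- but your packaging is shorter and arguably cleaner, at the price of leaning on the ``if'' direction of Theorem \ref{intro:vitali_for_mo}, whereas the paper's route stays with the bare definition of modular convergence and reuses its Lemma \ref{res:approx_thm_helpf_lem} elsewhere.

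Three small repairs are needed to make your version airtight. First, $u\in V^M_T$ only gives $\int_{\Omega_T}M(t,x,\nabla u/\lambda_0)\,\diff t\diff x<\infty$ for \emph{some} $\lambda_0>0$, not for $\lambda_0=1$; so run your domination for $v_\epsilon/(2\lambda_0)$ (splitting $\nabla u/\lambda_0$ and $T_k(u^\epsilon)\nabla\psi/\lambda_0$), which is all that (V2) requires --- this is exactly the normalization the paper handles by its scaling remark after Lemma \ref{res:approx_thm_helpf_lem}. Second, choosing a cube $Q(x)$ for every point $x$ invokes the a.e.\ inequality \eqref{intro:our_ass_on_M} for uncountably many cubes; replace this by the paper's countable lattice of cubes $\{Q_i\}$ of edge $\epsilon$, noting that $x\in Q_i$ already forces $B(x,2\epsilon)\subset 5Q_i$, so nothing is lost. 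Third, if \eqref{intro:our_ass_on_M} is read (as the paper's own proof suggests, via the parameter $\xi_0$) as valid only for $|\xi|\ge|\xi_0|$, you must split off the region $|v_\epsilon/(2\lambda_0)|\le|\xi_0|$ and absorb it into $C_2$ using $M\le m_2$, exactly as in \eqref{res:proof_of_reg_easy_part}.
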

\noindent The key estimate needed for the proof of Theorem \ref{res:approx_theorem} is formulated in the following lemma.
\begin{lem}\label{res:approx_thm_helpf_lem}
Suppose that Assumption \ref{intro:ass_on_M} is satisfied, $v:\Omega_T \to \R^d$ and $v \in \mus$ with $\intmus{v(t,x)} < \infty$. Assume that $v = \nabla u + \varphi$ for some $u \in V^M_T(\Omega)$ and $\varphi \in L^{\infty}(\Omega_T)$. Then, there is a constant $C$ such that for any compactly supported $\psi:\Omega \to \R$ with $0 \leq \psi \leq 1$ and for all $k\in \mathbb{N}$,
\begin{multline*}
\limsup_{\varepsilon \to 0} \intmus{ \big(\mathds{1}_{\left|u^{\varepsilon}\right| \leq k}\,v^{\varepsilon}(t,x) \psi(x)\big)^{\varepsilon}} \leq \\ \leq
\int_{\Omega_T} m_2 \left(|v(t,x)|\psi(x) \right) \mathds{1}_{|v(t,x)|\psi(x) \leq \xi_0} \diff t \diff x +
C \intmus{v(t,x)},
\end{multline*}
where $\xi_0$ is a constant from Assumption \ref{intro:ass_on_M} and $m_2$ is a Young function as in \ref{intro:defNf_control_isotropic} in Definition~\ref{intro:def_Nfunc}.
\end{lem}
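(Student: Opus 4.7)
The plan is to exploit the decomposition $v = \nabla u + \varphi$ to get a pointwise $L^{\infty}$ bound on the doubly-mollified function, transfer the mollifications inside $M$ by Jensen's inequality applied to the localized modular $M_{Q}^{**}$, and then invoke Assumption \ref{intro:ass_on_M} to recover a bound in terms of $M$ itself. Writing out both mollifications gives a kernel representation
$$
F^{\epsilon}(t,x) := \bigl(\mathds{1}_{|u^{\epsilon}|\leq k}\, v^{\epsilon}\, \psi\bigr)^{\epsilon}(t,x) = \int_{\R^{d}} K_{\epsilon}(t,x,z)\, v(t,z)\, \diff z,
$$
where $K_{\epsilon}(t,x,z) := \int_{\R^{d}} \eta_{\epsilon}(x-y)\, \mathds{1}_{|u^{\epsilon}(t,y)|\leq k}\, \psi(y)\, \eta_{\epsilon}(y-z)\, \diff y \geq 0$ satisfies $\int K_{\epsilon}\,\diff z,\ \int K_{\epsilon}\,\diff x \leq 1$.

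For the pointwise bound, observe that $\mathds{1}_{|u^{\epsilon}|\leq k}\nabla u^{\epsilon} = \nabla T_{k}(u^{\epsilon})$, and hence
$$
\mathds{1}_{|u^{\epsilon}|\leq k}\, v^{\epsilon}\, \psi = \nabla\bigl(T_{k}(u^{\epsilon})\psi\bigr) - T_{k}(u^{\epsilon})\nabla\psi + \mathds{1}_{|u^{\epsilon}|\leq k}\, \varphi^{\epsilon}\, \psi,
$$
whose last two summands are bounded by a constant $C(k,\psi,\varphi)$ uniformly in $\epsilon$. Convolving with $\eta_{\epsilon}$ and shifting the gradient onto the kernel yields $\|F^{\epsilon}\|_{L^{\infty}(\Omega_{T})} \leq \|T_{k}(u^{\epsilon})\psi\|_{L^{\infty}}\|\nabla\eta_{\epsilon}\|_{L^{1}} + C(k,\psi,\varphi) \leq C_{k}/\epsilon$.

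Now fix $\delta = 4\epsilon < \delta_{0}$ and tile $\R^{d}$ by cubes $\{Q_{j}\}$ of edge $\delta$. For $x \in Q_{j}$, the $z$-support of $K_{\epsilon}(t,x,\cdot)$ lies in $B(x,2\epsilon) \subset 5Q_{j}$. Since $M_{Q_{j}}^{**}(t,\cdot)$ is convex with $M_{Q_{j}}^{**}(t,0)=0$, a Jensen-type argument (rescaling $K_{\epsilon}$ to a probability density) gives
$$
M_{Q_{j}}^{**}\bigl(t, F^{\epsilon}(t,x)\bigr) \leq \int K_{\epsilon}(t,x,z)\, M_{Q_{j}}^{**}(t, v(t,z))\,\diff z \leq \int K_{\epsilon}(t,x,z)\, M(t,z, v(t,z))\,\diff z,
$$
where we also used $M_{Q_{j}}^{**}(t,\cdot) \leq M_{Q_{j}}(t,\cdot) \leq M(t,z,\cdot)$ for $z \in 5Q_{j}$. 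On the set $\{|F^{\epsilon}|\geq \xi_{0}\}$, Assumption \ref{intro:ass_on_M} combined with $|F^{\epsilon}| \leq C_{k}/\delta$ yields $\Theta(t,\delta, |F^{\epsilon}|) \leq R$ uniformly, so $M(t,x, F^{\epsilon}) \leq R \int K_{\epsilon}(t,x,z)\, M(t,z,v(t,z))\,\diff z$; integrating and applying Fubini with $\int K_{\epsilon}\,\diff x \leq 1$ produces the $C\int_{\Omega_{T}} M(t,x,v(t,x))\,\diff t\diff x$ term. On the complementary set, the direct bound $M(t,x, F^{\epsilon}) \leq m_{2}(|F^{\epsilon}|)$ (Definition \ref{intro:def_Nfunc}, \ref{intro:defNf_control_isotropic}) combined with Jensen applied to the convex function $m_{2}(|\cdot|)$ through the kernel $K_{\epsilon}$ produces the $\int m_{2}(|v|\psi)\mathds{1}_{|v|\psi\leq \xi_{0}}\,\diff t\diff x$ term after pulling back to $v$; taking $\limsup$ concludes.

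The main obstacle is the pointwise bound $\|F^{\epsilon}\|_{L^{\infty}(\Omega_{T})} \leq C_{k}/\epsilon$, which makes $\delta = O(\epsilon)$ a valid choice and places us in the regime $\Theta(t,\delta, C\delta^{-1}) \leq R$ required by Assumption \ref{intro:ass_on_M}. This bound is available precisely because $v = \nabla u + \varphi$: we may rewrite $\mathds{1}_{|u^{\epsilon}|\leq k}\, v^{\epsilon}\, \psi$ as a gradient of the bounded function $T_{k}(u^{\epsilon})\psi$ up to a bounded remainder, shifting the derivative onto $\eta_{\epsilon}$ during the outer convolution. Without the structural assumption on $v$ one would only have the naive $O(\epsilon^{-d})$ bound coming from $\|\eta_{\epsilon}\|_{L^{\infty}}$ paired with an $L^{1}$-norm, which would be incompatible with the scaling of $\Theta$ controlled by Assumption \ref{intro:ass_on_M}.
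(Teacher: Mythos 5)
Your treatment of the large-value region, which is the heart of the lemma, is essentially the paper's argument: the same decomposition $\mathds{1}_{|u^{\epsilon}|\leq k}v^{\epsilon}\psi=\nabla\bigl(T_k(u^{\epsilon})\psi\bigr)-T_k(u^{\epsilon})\nabla\psi+\mathds{1}_{|u^{\epsilon}|\leq k}\varphi^{\epsilon}\psi$ giving the $L^{\infty}$ bound of order $C_k/\epsilon$ after shifting the gradient onto the outer kernel, the same tiling by cubes of edge comparable to $\epsilon$, Jensen for the convex function $M_{Q}^{**}(t,\cdot)$ with $M_{Q}^{**}(t,0)=0$, the bound $M_{Q}^{**}\leq M_{Q}\leq M(t,z,\cdot)$ on $5Q\cap\Omega$, and Assumption \ref{intro:ass_on_M} in the regime $\Theta(t,\delta,C\delta^{-1})\leq R$. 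Your single combined kernel $K_{\epsilon}$ with $\int K_{\epsilon}\,\diff x\leq 1$ plus Fubini is a mild streamlining of the paper's two successive Jensen steps and cube-overlap counting, and that part is sound.

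The genuine gap is the final step on the complementary set $\{|F^{\epsilon}|\leq\xi_0\}$. Applying Jensen to $m_2(|\cdot|)$ through $K_{\epsilon}$ gives $m_2(|F^{\epsilon}(t,x)|)\leq\int K_{\epsilon}(t,x,z)\,m_2(|v(t,z)|)\,\diff z$ (with at best a $\psi$ evaluated at the intermediate point), but it cannot produce the indicator $\mathds{1}_{|v|\psi\leq\xi_0}$: the set $\{|F^{\epsilon}|\leq\xi_0\}$ in the $(t,x)$ variables does not pull back to $\{|v|\psi\leq\xi_0\}$ in the $z$ variable, since mollification can make $F^{\epsilon}$ small at a point even though $|v|$ is huge nearby. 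After Fubini you would therefore only obtain $\int_{\Omega_T}m_2(|v|)\,\diff t\diff x$, which need not be finite --- $v\in\mus$ with $\intmus{v(t,x)}<\infty$ controls $M$, not the upper Young function $m_2$ --- and in particular is not dominated by the finite right-hand side of the lemma. The correct (and simpler) route, which is the paper's, is to avoid Jensen here entirely: on $\{|F^{\epsilon}|\leq\xi_0\}$ the integrand $M(t,x,F^{\epsilon})\leq m_2(|F^{\epsilon}|)\leq m_2(\xi_0)$ is uniformly bounded, and since $F^{\epsilon}\to\mathds{1}_{|u|\leq k}\,v\,\psi$ a.e., property \ref{intro:propNfunct:item6} of Lemma \ref{intro:lem_prop_Nfunc} (dominated convergence) yields $\limsup_{\epsilon\to 0}\int_{\Omega_T}M(t,x,F^{\epsilon})\mathds{1}_{|F^{\epsilon}|\leq\xi_0}\diff t\diff x\leq\int_{\Omega_T}m_2(|v|\psi)\mathds{1}_{|v|\psi\leq\xi_0}\diff t\diff x$. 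With that replacement your proof closes; as written, the step fails.
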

\begin{rem}
Since $v \in \mus$, the condition $\intmus{v(t,x)} < \infty$ can be always satisfied by considering appropriate scaling if necessary.
\end{rem}
\begin{proof}[Proof of Lemma \ref{res:approx_thm_helpf_lem}]
To shorten all formulas, we denote $z_{\varepsilon}(t,x) = \big(\mathds{1}_{\left|u^{\varepsilon}\right| \leq k} \, v^{\varepsilon}(t,x) \psi(x)\big)^{\varepsilon}$ and write:
\begin{equation}\label{res:proof_of_approx_thm_splitstep}
\begin{split}
\intmus{\big(\mathds{1}_{\left|u^{\varepsilon}\right| \leq k} \,v^{\varepsilon}(t,x) \psi(x)\big)^{\varepsilon}} \leq &\\ \leq
\int_{\Omega_T} M\left(t,x,z_{\varepsilon}(t,x)\right) \mathds{1}_{|z_{\varepsilon}(t,x)| \leq \xi_0} \diff t \diff x &+
\int_{\Omega_T} M\left(t,x,z_{\varepsilon}(t,x)\right) \, \mathds{1}_{|z_{\varepsilon}(t,x)| > \xi_0} \diff t \diff x.
\end{split}
\end{equation}
For the first term, we use \ref{intro:defNf_control_isotropic} in Definition \ref{intro:def_Nfunc} to observe:
$$
\int_{\Omega_T} M\left(t,x,z_{\varepsilon}(t,x)\right) \mathds{1}_{|z_{\varepsilon}(t,x)| \leq \xi_0} \diff t \diff x \leq
\int_{\Omega_T} m_2\left(t,x,z_{\varepsilon}(t,x)\right) \mathds{1}_{|z_{\varepsilon}(t,x)| \leq \xi_0} \diff t \diff x 
$$
and so, by \ref{intro:propNfunct:item6} in Lemma \ref{intro:lem_prop_Nfunc} we get
\begin{equation}\label{res:proof_of_reg_easy_part}
\limsup_{\varepsilon \to 0} \int_{\Omega_T} M\left(t,x,z_{\varepsilon}(t,x)\right) \mathds{1}_{|z_{\varepsilon}(t,x)| \leq \xi_0} \leq
\int_{\Omega_T} m_2 \left(|v(t,x)|\psi(x) \right) \mathds{1}_{|v(t,x)|\psi(x) \leq \xi_0} \diff t \diff x.
\end{equation}
Hence, it is sufficient to focus on the second term in \eqref{res:proof_of_approx_thm_splitstep}. Let $\{Q_j\}_{j=1}^{N_{\varepsilon}}$ be a family of closed cubes with edge $\varepsilon$ such that $\mbox{int} Q_j \cap \mbox{int} Q_i = \emptyset$ for $i \neq j$ and $\Omega \subset \cup_{i=1}^{N_{\varepsilon}} Q_i$. Moreover, let $3{Q_i}$ and $5{Q_i}$ be the cubes with the same center as $Q_i$ and edges $3\varepsilon$ and $5\varepsilon$, respectively. Then,
\begin{multline*}
\int_{\Omega_T} M\left(t,x,z_{\varepsilon}(t,x)\right) \mathds{1}_{|z_{\varepsilon}(t,x)| > \xi_0}  \diff t \diff x = \\=\sum_{i=1}^{N_{\varepsilon}}
\int_0^T  \int_{Q_i \cap \Omega} 
\frac{M\left(t,x,z_{\varepsilon}(t,x)\right)}{M_{Q_i}^{**}(t, z_{\varepsilon}(t,x))} M_{Q_i}^{**}(t, z_{\varepsilon}(t,x))  \mathds{1}_{|z_{\varepsilon}(t,x)| > \xi_0} \diff x \diff t,
\end{multline*}
where $M_{Q_i}^{**}$ is defined in Assumption \ref{intro:ass_on_M}. Note that we assume that $v = \nabla u + \varphi$ for some $u \in V^M_T(\Omega)$ and $\varphi \in L^{\infty}(\Omega_T)$. We note that
$$
z_{\varepsilon}(t,x) =  \big(\nabla T_k(u^{\varepsilon}(t,x)) \psi(x)\big)^{\varepsilon} + 
 \big(\mathds{1}_{\left|u^{\varepsilon}\right| \leq k} \, \varphi^{\varepsilon}(t,x) \psi(x)\big)^{\varepsilon} := z_{\varepsilon}^1(t,x) + z_{\varepsilon}^2(t,x).
$$
Clearly, using Young's convolutional inequality, we have $\left| z_{\varepsilon}^2(t,x)\right| \leq \| \varphi \|_{\infty} \, \| \psi \|_{\infty}$. Moreover,
$$
z_{\varepsilon}^1(t,x) = -\,\big(T_k(u^{\varepsilon}) \, \DIV \psi \big) \ast \eta_{\varepsilon} (t,x)+
\big(T_k(u^{\varepsilon}) \psi\big) \ast \nabla \eta_{\varepsilon} (t,x)
$$
so applying Young's convolutional inequality we have:
\begin{equation*}
|z_{\varepsilon}^1(t,x)| \leq k \, \|\DIV \psi\|_{\infty} + \frac{k \, \| \psi \|_{\infty}\, \| \nabla \eta_{\varepsilon}\|_{1}}{\varepsilon}.
\end{equation*}
We conclude that $|z_{\varepsilon}(t,x)| \leq \frac{C(k,\varphi, \eta)}{\varepsilon}$ for $\varepsilon < 1$ and therefore, using \eqref{intro:our_ass_on_M}, we get that for $x \in Q_i \cap \Omega$ the following inequality
$$
\frac{M\left(t,x,z_{\varepsilon}(t,x)\right)}{M_{Q_i}^{**}(t, z_{\varepsilon}(t,x))}
\leq \Theta\left (t, \delta,  \frac{C(k,\varphi, \eta)}{\varepsilon} \right) \leq C
$$
holds true for sufficiently small $\varepsilon$. Consequently,
\begin{equation}\label{res:proof_of_reg_splitting}
\int_{\Omega_T} M\left(t,x,z_{\varepsilon}(t,x)\right) \mathds{1}_{|z_{\varepsilon}(t,x)| > \xi_0} \diff t \diff x \leq
C \sum_{i=1}^{N_{\varepsilon}} \int_0^T  \int_{Q_i \cap \Omega} 
 M_{Q_i}^{**}(t, z_{\varepsilon}(t,x)) \diff x \diff t.
\end{equation}
To estimate the right hand side in the above inequality, we focus on each summand separately. Using Jensen's and Young's convolutional inequalities we deduce:
\begin{equation}\label{res:estimate_using_convex_env}
\begin{split}
& \int_0^T  \int_{Q_i \cap \Omega} 
 M_{Q_i}^{**}(t, z_{\varepsilon}(t,x)) \diff x \diff t  \\
& \qquad = \int_0^T  \int_{Q_i \cap \Omega} 
 M_{Q_i}^{**}\left(t, \int_{B(0,\varepsilon)} \eta_{\varepsilon}(y) \Big(v^{\varepsilon}(t,x-y) \psi(x-y) \mathds{1}_{|u^{\varepsilon}|\leq k}(x-y) \Big) \diff y \right) \diff x \diff t 
 \\
 & \qquad  \leq 
  \int_0^T  \int_{Q_i \cap \Omega} \int_{B(0,\varepsilon)}  \eta_{\varepsilon}(y)
 M_{Q_i}^{**}\big(t,  v^{\varepsilon}(t,x-y) \psi(x-y) \mathds{1}_{|u^{\varepsilon}|\leq k}(x-y)  \big) \diff y \diff x \diff t 
  \\
 & \qquad  \leq
   \int_0^T  \int_{\R^d} \int_{B(0,\varepsilon)}  \eta_{\varepsilon}(y)
 M_{Q_i}^{**}\big(t,  v^{\varepsilon}(t,x-y) \psi(x-y) \mathds{1}_{3{Q_i} \cap \Omega}(x-y) \big) \diff y \diff x \diff t 
  \\
 &
 \qquad  \leq 
   \int_0^T  \int_{\R^d}  M_{Q_i}^{**}\big(t,  v^{\varepsilon}(t,x) \psi(x) \mathds{1}_{3{Q_i} \cap \Omega}(x) \big)  \diff x \diff t = \int_0^T  \int_{3{Q_i} \cap \Omega}  M_{Q_i}^{**}\big(t,  v^{\varepsilon}(t,x) \psi(x) \big)  \diff x \diff t, 
\end{split}
\end{equation}
where we used the fact that $\|\eta_{\varepsilon}\|_{L^1} = 1$ and the fact that $M_{Q_i}^{**}(t,\xi) = 0 \iff \xi = 0$. Next, by convexity of $\xi \mapsto M_{Q_i}^{**}(t,\xi)$ and thanks to $0 \leq \psi(x) \leq 1$, we can simply estimate the last term as
$$
\int_0^T  \int_{3{Q_i} \cap \Omega}  M_{Q_i}^{**}\big(t,  v^{\varepsilon}(t,x) \psi(x) \big)  \diff x \diff t \leq
\int_0^T  \int_{3{Q_i} \cap \Omega \cap \supp(\psi)}   M_{Q_i}^{**}\big(t,  v^{\varepsilon}(t,x) \big)  \diff x \diff t.
$$
Then, repeating the procedure from \eqref{res:estimate_using_convex_env}, we deduce
$$
\int_0^T  \int_{3{Q_i} \cap \Omega \cap \supp(\psi)}   M_{Q_i}^{**}\big(t,  v^{\varepsilon}(t,x) \big)  \diff x \diff t \leq
\int_0^T  \int_{5{Q_i} \cap \Omega \cap \supp(\psi)}   M_{Q_i}^{**}\big(t,  v(t,x) \big)  \diff x \diff t.
$$
Finally, as $M_{Q_i}(t,\xi) = \mbox{ess inf}_{x \in \Omega \cap 5{Q_{i}}} M(t,x,\xi)$ and since $M^{**}_{Q_i}(t,\xi) \leq M_{Q_i}(t,\xi)$, we can estimate each summand by the above inequality to get:
$$
\int_0^T  \int_{5{Q_i} \cap \Omega \cap \supp(\psi)}   M_{Q_i}^{**}\big(t,  v(t,x) \big)  \diff x \diff t \leq 
\int_0^T  \int_{5{Q_i} \cap \Omega} M(t,x, v(t,x)) \diff x \diff t.
$$
Coming back to \eqref{res:proof_of_reg_splitting}, we obtain
\begin{equation}\label{res:proof_of_reg_almost_there!}
\int_{\Omega_T} M\left(t,x,z_{\varepsilon}(t,x)\right) \mathds{1}_{|z_{\varepsilon}(t,x)| > \xi_0} \diff t \diff x \leq
\int_{\Omega_T} M\left(t,x,v(t,x)\right) \diff t \diff x
\end{equation}
for some possibly different constant $C$ which can be increased due to integration over repeating parts of overlaping cubes $\left\{5{Q_i} \right\}_{i=1}^{N_{\varepsilon}}$. Combining \eqref{res:proof_of_reg_easy_part} with \eqref{res:proof_of_reg_almost_there!}, we finish the proof. 
\end{proof}
\begin{proof}[Proof of Theorem \ref{res:approx_theorem}]
First two properties follow from properties of mollification and continuity of the truncation. To show also the third property, we first compute:
$$
\nabla\left(T_k(u^{\varepsilon}) \psi\right)^{\varepsilon} = 
\left(\mathds{1}_{|u^{\varepsilon}| \leq k} (\nabla u)^{\varepsilon} \psi\right)^{\varepsilon} + 
\left(T_k(u^{\varepsilon}) \nabla\psi\right)^{\varepsilon}.
$$ 
Then, due to \ref{intro:propNfunct:item6} in Lemma \ref{intro:lem_prop_Nfunc},
$
\left(T_k(u^{\varepsilon}) \nabla\psi\right)^{\varepsilon} \armus{M} 
T_k(u) \nabla\psi
$
and so, it is sufficient to focus only on the first term. Using Lemma \ref{intro:dens_simpl_fcn}, we find a sequence of simple functions $\{\varphi_n\}_{n \in \N}$ such that $\varphi_n \to \nabla u$ a.e. and $\varphi_n \armus{M} \nabla u$ as $n \to \infty$, i.e. there is $\widetilde{\lambda} > 0$ such that
$$
\intmus{\frac{\nabla u(t,x) - \varphi_n(t,x) }{\widetilde{\lambda}}} \to 0.
$$
Then, for some $\lambda_1, \lambda_2, \lambda_3$ to be chosen later, $\lambda = \lambda_1 + \lambda_2 + \lambda_3$ and some $n \in \mathbb{N}$ we write:
\begin{equation*}
\begin{split}
&\intmus{\frac{\left(\mathds{1}_{|u^{\varepsilon}| \leq k} (\nabla u)^{\varepsilon} \psi\right)^{\varepsilon} - \mathds{1}_{|u| \leq k} \nabla u \psi}{\lambda}} \leq  \\
& \qquad \qquad \leq \frac{\lambda_1}{\lambda} \intmus{\frac{\left(\mathds{1}_{|u^{\varepsilon}| \leq k} (\nabla u)^{\varepsilon} \psi\right)^{\varepsilon} - \left(\mathds{1}_{|u^{\varepsilon}| \leq k} (\varphi_n)^{\varepsilon} \psi\right)^{\varepsilon}}{\lambda_1}} + \\
& \qquad \qquad + \frac{\lambda_2}{\lambda} \intmus{\frac{\left(\mathds{1}_{|u^{\varepsilon}| \leq k} (\varphi_n)^{\varepsilon} \psi\right)^{\varepsilon} - \mathds{1}_{|u| \leq k} \varphi_n \psi }{\lambda_2}} \\
& \qquad \qquad + \frac{\lambda_3}{\lambda} \intmus{\mathds{1}_{|u| \leq k} \psi \frac{ \varphi_n  -  \nabla u}{\lambda_3}} =: A^{n,\varepsilon} + B^{n,\varepsilon} + C^{n,\varepsilon}.
\end{split}
\end{equation*}
Using \ref{intro:propNfunct:item6} in Lemma \ref{intro:lem_prop_Nfunc}, for any $n \in \N$ and $\lambda_2 > 0$, $\limsup_{\varepsilon \to 0} B^{n,\varepsilon} = 0$. Also, we note that
\begin{multline*}
\frac{\lambda_1}{\lambda} \intmus{\frac{\left(\mathds{1}_{|u^{\varepsilon}| \leq k} (\nabla u)^{\varepsilon} \psi\right)^{\varepsilon} - \left(\mathds{1}_{|u^{\varepsilon}| \leq k} (\varphi_n)^{\varepsilon} \psi\right)^{\varepsilon}}{\lambda_1}} \leq \\ \leq
\frac{\lambda_1}{\lambda} \intmus{\frac{\left(\mathds{1}_{|u^{\varepsilon}| \leq k} (\nabla u - \varphi_n)^{\varepsilon} \psi\right)^{\varepsilon}}{\lambda_1}}.
\end{multline*}
Therefore, if we choose $\lambda_1 = \lambda_3 = \widetilde{\lambda}$ and use Lemma \ref{res:approx_thm_helpf_lem}, we obtain
\begin{multline*}
\limsup_{\varepsilon \to 0} \left(A^{n,\varepsilon} + C^{n,\varepsilon} \right) \leq \\ \leq
\intmus{\mathds{1}_{|u| \leq k} \psi \frac{ \varphi_n  -  \nabla u}{\widetilde{\lambda}}} + \int_{\Omega_T} m_2 \left(\left|\frac{ \varphi_n  -  \nabla u}{\widetilde{\lambda}}\right|\psi(x) \right) \mathds{1}_{\left|\frac{ \varphi_n  -  \nabla u}{\widetilde{\lambda}}\right|\psi(x) \leq \xi_0} \diff t \diff x.
\end{multline*}
Since $\varphi_n \to \nabla u$ a.e. in $\Omega_T$ and $\varphi_n \armus{M} \nabla u$, we conclude the proof.
\end{proof}
\subsection{Regularization of the operator}
\noindent In this section, we formulate well-posedness theory for parabolic equations in Musielak-Orlicz spaces with Young functions. This allows us to construct solution to our problem by a limiting procedure. The following result was proven by Elmahi and Meskine \cite[Theorem 2]{elmahi2005parabolic} using Galerkin's approximation and mollification as in Section \ref{res:section_on_mollification} (however here $N$-function is homogeneous and isotropic so the result can be established significantly easier).
\begin{thm}\label{res:iso_wpt_thm}
Let $\Omega \subset \R^d$ be a bounded domain with segment property. Let $m: \R \to \R$ be a Young function. Suppose that $a: \Omega_T \times \R^d \to \R^d$ satisfies:
\begin{enumerate}[label=(R\arabic*)]
\item \label{reg_thm:ass1}$a$ is a Carath\'eodory's function, i.e. for a.e. $(t,x) \in \Omega_T$, map $\mathbb{R}^d \ni \xi \mapsto a(t,x,\xi)$ is continuous and for all $\xi \in \mathbb{R}^d$, map $\Omega_T \ni (t,x) \mapsto a(t,x,\xi)$ is measurable,
\item \label{reg_thm:ass2} there are $c \in E_{m^*}(\Omega_T)$ with $c \geq 0$ and nonnegative constant $\beta$ and $\gamma$ such that
$$
|a(t,x,\xi)| \leq \beta \Big( c(t,x) + (m^*)^{-1}(m(\gamma |\xi|)) \Big),
$$  
\item \label{reg_thm:ass3} there are $d \in L^1(\Omega_T)$ and nonnegative constants $\alpha$ and $\lambda$ such that
$$
a(t,x,\xi)\cdot \xi + d(t,x) \geq \alpha \, m\left(\frac{|\xi|}{\lambda} \right),
$$
\item \label{reg_thm:ass4} $a$ is stronly monotone, i.e. for all $\eta,\xi \in \R^d$ and a.e. $(t,x) \in \Omega_T$:
$$
(a(t,x,\xi) - a(t,x,\eta)) \cdot (\xi - \eta) > 0.
$$
\end{enumerate}
Then, the problem 
$$
 u_t = \DIV a(t,x,\nabla u) + g
$$
with $u(0,x) = u_0(x) \in L^{\infty}(\Omega_T)$, $u(t,x) = 0$ for $x \in \partial \Omega$ and $g \in  L^{\infty}(\Omega_T)$ has the unique weak solution $u \in C((0,T); L^2(\Omega)) \cap W^1L_{m}(\Omega_T)$ (see Definition \ref{intro:Orlicz-Sob_Def}).
\end{thm}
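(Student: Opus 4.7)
The plan is to build a Galerkin sequence, extract weak-$^*$ limits via Lemma \ref{intro:weak_compactness}, and identify the nonlinear flux by the Minty monotonicity trick, using the mollification-based approximation of Theorem \ref{res:approx_theorem} as the essential device for enlarging the admissible class of test functions. Uniqueness will follow from the strict monotonicity \ref{reg_thm:ass4} by the same approximation technique.

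Concretely, I pick a smooth basis $\{w_j\}_{j \in \N} \subset C_0^\infty(\Omega)$ whose linear span is dense in $W_0^{1,p}(\Omega)$ for a fixed $p > d$, and solve the Galerkin ODE
\begin{equation*}
\tfrac{\diff}{\diff t}(u_N,w_j)_{L^2} + \int_\Omega a(t,x,\nabla u_N)\cdot\nabla w_j\,\diff x = \int_\Omega g\, w_j\,\diff x, \qquad u_N(0)=P_N u_0,
\end{equation*}
whose solvability follows from Carathéodory theory via \ref{reg_thm:ass1}. Testing with $u_N$ and applying \ref{reg_thm:ass3} together with Gronwall produces the uniform bounds
\begin{equation*}
\|u_N\|_{L^\infty(0,T;L^2(\Omega))} + \int_{\Omega_T} m\!\left(\tfrac{|\nabla u_N|}{\lambda}\right) \diff t\,\diff x \leq C,
\end{equation*}
which, by property \ref{intro:propNfunct:item55} in Lemma \ref{intro:lem_prop_Nfunc}, gives a uniform $L_m(\Omega_T)$-bound on $\nabla u_N$; then \ref{reg_thm:ass2} bounds $a(t,x,\nabla u_N)$ in $L_{m^*}(\Omega_T)$. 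Using the equation to control $\partial_t u_N$ in $L^1(0,T;(W_0^{1,p}(\Omega))^*)$, Aubin-Lions yields strong convergence $u_N \to u$ in $L^2(\Omega_T)$ and a.e.; passing to the limit in the Galerkin identity gives $\partial_t u = \DIV \mathbf{A} + g$ for some $\mathbf{A} \in L_{m^*}(\Omega_T)$, with $u(0) = u_0$.

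The main obstacle is the identification $\mathbf{A} = a(t,x,\nabla u)$. Writing the Galerkin energy identity and taking $\limsup$, weak lower semicontinuity of the $L^2$-norm produces
\begin{equation*}
\limsup_{N\to\infty}\int_{\Omega_T} a(t,x,\nabla u_N)\cdot \nabla u_N\,\diff t\,\diff x \leq \int_{\Omega_T} \mathbf{A}\cdot \nabla u\,\diff t\,\diff x.
\end{equation*}
Combined with the monotonicity inequality $\int_{\Omega_T}(a(t,x,\nabla u_N)-a(t,x,\varphi))\cdot(\nabla u_N-\varphi) \geq 0$, this gives in the limit
\begin{equation*}
\int_{\Omega_T}(\mathbf{A}-a(t,x,\varphi))\cdot(\nabla u - \varphi)\,\diff t\,\diff x \geq 0
\end{equation*}
for every $\varphi$ such that $a(t,x,\varphi) \in L_{m^*}$. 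I would then set $\varphi = \nabla(T_k(u^\epsilon)\psi)^\epsilon$ from Theorem \ref{res:approx_theorem} and pass with $\epsilon \to 0$, $\psi \uparrow 1$, $k \to \infty$, using Corollary \ref{intro:hold_conv_inmod} to handle the duality pairings, so as to reach the same inequality with $\varphi$ replaced by $\nabla u$. The classical Minty perturbation $\varphi = \nabla u \pm s\zeta$ for smooth $\zeta$ and $s > 0$, combined with continuity of $a$ in $\xi$ from \ref{reg_thm:ass1}, then forces $\mathbf{A} = a(t,x,\nabla u)$ a.e.

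For uniqueness, the difference $w = u_1 - u_2$ of two solutions satisfies $\partial_t w = \DIV\bigl(a(t,x,\nabla u_1) - a(t,x,\nabla u_2)\bigr)$ with $w(0)=0$. Testing with $w$ — justified by approximating $w$ as in Theorem \ref{res:approx_theorem} and passing to the limit — yields
\begin{equation*}
\tfrac{1}{2}\|w(t)\|_{L^2(\Omega)}^2 + \int_0^t\!\!\int_\Omega \bigl(a(s,x,\nabla u_1) - a(s,x,\nabla u_2)\bigr)\cdot(\nabla u_1 - \nabla u_2)\,\diff x\,\diff s = 0,
\end{equation*}
and the strict monotonicity \ref{reg_thm:ass4} forces $\nabla u_1 = \nabla u_2$ a.e., hence $u_1 = u_2$ by the Poincaré inequality \ref{Poinc_ineq}. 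The continuity $u \in C((0,T);L^2(\Omega))$ is obtained a posteriori from the equation by a standard density argument.
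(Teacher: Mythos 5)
The paper does not prove Theorem \ref{res:iso_wpt_thm} at all: it is quoted from Elmahi and Meskine (Theorem 2 of the cited work), whose argument is indeed Galerkin approximation plus mollification, so your overall architecture coincides with the intended one. However, as a proof your sketch has two genuine gaps, both located exactly where the Orlicz setting (no $\Delta_2$-condition, non-reflexive spaces) bites.

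First, the displayed inequality $\limsup_{N}\int_{\Omega_T} a(t,x,\nabla u_N)\cdot\nabla u_N \le \int_{\Omega_T}\mathbf{A}\cdot\nabla u$ is not a consequence of weak lower semicontinuity alone. From the Galerkin energy identity and WLSC of the $L^2$-norm you only get $\limsup_N\int_{\Omega_T} a(t,x,\nabla u_N)\cdot\nabla u_N \le \tfrac12\|u_0\|_{L^2}^2-\tfrac12\|u(T)\|_{L^2}^2+\int_{\Omega_T} g\,u$; converting the right-hand side into $\int_{\Omega_T}\mathbf{A}\cdot\nabla u$ requires an energy equality for the limit equation $u_t=\DIV\mathbf{A}+g$, i.e.\ the justification that $u$ itself (after truncation and regularization in time) is an admissible test function. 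That is the actual core of the Elmahi--Meskine proof, and of Lemma \ref{res:local_energy_equality_form} in this paper; it cannot be absorbed into ``WLSC produces''. You do invoke Theorem \ref{res:approx_theorem}, but you deploy it only to choose $\varphi$ inside the monotonicity inequality, not where it is really needed. Second, the closing step ``classical Minty perturbation $\varphi=\nabla u\pm s\zeta$'' is not available here: the modular class is not a vector space, \ref{reg_thm:ass2} controls $a(t,x,\nabla u\pm s\zeta)$ only through $m(\gamma|\nabla u\pm s\zeta|)$, which need not be integrable (membership $\nabla u\in L_m$ gives integrability of $m(|\nabla u|/\lambda)$ for \emph{some} $\lambda$ only), and without $\Delta_2$ there is no dominated convergence as $s\to0^+$. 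Moreover, the limit monotonicity inequality itself is only justified for bounded $\varphi$ (you need $\varphi\in E_m$ and $a(\cdot,\cdot,\varphi)\in E_{m^*}$ to pair with the weak-$*$ limits), not for every $\varphi$ with $a(t,x,\varphi)\in L_{m^*}$. The identification must instead proceed via truncated gradients and bounded perturbations, exactly as in Lemma \ref{res:monot_trick}: take $\eta$ bounded, localize to $\{|\nabla u|\le k\}$, send the perturbation parameter to zero using \ref{intro:propNfunct:item6}. The same remark applies to your uniqueness step, where ``testing with $w$'' again presupposes the (truncated, cut-off--free) energy equality; in the homogeneous isotropic case this is achievable, but it is the main technical content, not a remark.
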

\noindent Using Theorem \ref{res:iso_wpt_thm}, one can define a sequence approximating solutions to \eqref{intro:parabolic_eq} as follows:
\begin{lem}\label{res:approximation_theta_lem}
Suppose $A$ satisfies Assumption \ref{intro:ass_on_A}, $M$ is an $N$-function and $m$ is a Young function such that $M(t,x,\xi) \leq m(|\xi|)$. For $\theta \in (0,1]$, consider regularized operator
\begin{equation}\label{res:regularized_problem_operator}
A_{\theta}(t,x,\xi) = A(t,x,\xi) + \theta \nabla_{\xi} m(|\xi|).
\end{equation}
Then, there exists a weak solution to the problem
\begin{equation}\label{res:regularized_problem}
 u^{\theta}_t = \DIV A_{\theta}(t,x,\nabla u^{\theta}) + g
\end{equation}
with $u(0,x) = u_0(x) \in L^{\infty}(\Omega_T)$, $u(t,x) = 0$ for $x \in \partial \Omega$ and $g \in  L^{\infty}(\Omega_T)$. More precisely, 
$$
u^{\theta} \in C((0,T);L^2(\Omega)) \cap L^1(0,T; W^{1,1}_0(\Omega)).
$$
Moreover, $u^{\theta}$ satisfies the global energy equality:
\begin{equation}\label{global_energy_regular_problem}
\begin{split}
&\int_{\Omega} \big[(u^{\theta}(t,x))^2 - (u_0(x))^2 \big] \diff x =
\\ &  \qquad \qquad \qquad 
-\, \int_0^t \int_{\Omega} A^{\theta}(s,x, \nabla u^{\theta}(s,x)) \cdot \nabla u^{\theta}(s,x) \, \diff x \diff s + \int_0^t \int_{\Omega} g(s,x) \,u^{\theta}(s,x) \, \diff x \diff s.
\end{split}
\end{equation}
We also have bounds which are uniform in $\theta$:
\begin{enumerate}[label=(C\arabic*)]
\item \label{unif_est_1} sequence $\{u^{\theta} \}_{\theta}$ is uniformly bounded in $L^{\infty}(0,T; L^2(\Omega))$,
\item \label{unif_est_2} sequence $\{\nabla u^{\theta} \}_{\theta}$ is uniformly bounded in $\mus$,
\item \label{unif_est_3} sequence $\{A(t,x,\nabla u^{\theta}) \}_{\theta}$ is uniformly bounded in $L_{M^*}(\Omega_T)$,
\item \label{unif_est_4} sequence $\{ \theta m^*(\nabla_{\xi} m(|\nabla u^{\theta}|)) \}_{\theta}$ is uniformly bounded in $L^1(\Omega_T)$.
\end{enumerate}
\end{lem}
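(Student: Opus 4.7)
The plan is to obtain $u^\theta$ by applying Theorem \ref{res:iso_wpt_thm} to the regularized operator $A_\theta$, and then derive the global energy equality \eqref{global_energy_regular_problem} together with the uniform estimates \ref{unif_est_1}--\ref{unif_est_4} directly from \ref{intro:ass_on_A:coercgr} by a test-function argument. First I would check hypotheses \ref{reg_thm:ass1}--\ref{reg_thm:ass4}: \ref{reg_thm:ass1} is inherited from \ref{intro:ass_on_A:continuity}; for coercivity \ref{reg_thm:ass3} I discard the nonnegative $M$ and $M^*$ contributions in \ref{intro:ass_on_A:coercgr} to get $A(t,x,\xi)\cdot \xi \geq -h(t,x)/c$ and combine this with the Young equality
\[
\nabla_\xi m(|\xi|) \cdot \xi \;=\; m'(|\xi|)\,|\xi| \;=\; m(|\xi|) + m^*\bigl(|\nabla_\xi m(|\xi|)|\bigr)
\]
to conclude that $A_\theta(t,x,\xi)\cdot \xi + h(t,x)/c \geq \theta\, m(|\xi|)$; strict monotonicity \ref{reg_thm:ass4} follows from \ref{intro:assumA_mono} together with strict monotonicity of the regularization term, which can be enforced by replacing $m(s)$ with $m(s)+s^2$ (a modification that preserves $M(t,x,\xi) \leq m(|\xi|)$). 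The growth hypothesis \ref{reg_thm:ass2} is the delicate one and is addressed below.

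Once $u^\theta \in C((0,T);L^2(\Omega)) \cap W^1 L_m(\Omega_T)$ is supplied by Theorem \ref{res:iso_wpt_thm}, the energy equality \eqref{global_energy_regular_problem} is obtained by testing \eqref{res:regularized_problem} against $u^\theta$ itself, which is admissible thanks to the $C(L^2)$-regularity and the standard duality pairing for the time derivative. To extract the uniform estimates I retain the $M$ and $M^*$ terms when bounding the dissipation: the pointwise inequality
\[
A_\theta(s,x,\nabla u^\theta)\cdot \nabla u^\theta \;\geq\; \tfrac{1}{c}\bigl(M(s,x,\nabla u^\theta) + M^*(s,x, A(s,x,\nabla u^\theta))\bigr) + \theta\, m^*\bigl(|\nabla_\xi m(|\nabla u^\theta|)|\bigr) - h(s,x)/c,
\]
substituted into \eqref{global_energy_regular_problem} and combined with $|g\,u^\theta| \leq \tfrac{1}{2}g^2 + \tfrac{1}{2}(u^\theta)^2$ and Gr\"onwall's lemma, yields \ref{unif_est_1}. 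The resulting $L^1(\Omega_T)$ bounds on $M(\cdot,\nabla u^\theta)$, $M^*(\cdot, A(\cdot,\nabla u^\theta))$ and on $\theta\, m^*(|\nabla_\xi m(|\nabla u^\theta|)|)$ then translate via \ref{intro:propNfunct:item55} of Lemma \ref{intro:lem_prop_Nfunc} into \ref{unif_est_2}, \ref{unif_est_3}, and they furnish \ref{unif_est_4} directly.

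The hard part is verifying the Orlicz growth bound \ref{reg_thm:ass2}: assumption \ref{intro:ass_on_A:coercgr} controls $A$ only through the $(t,x)$-dependent $N$-function $M$, while Theorem \ref{res:iso_wpt_thm} demands a pointwise bound by $(m^*)^{-1}(m(\gamma|\xi|))$ in terms of the isotropic Young function $m$. A clean way around this is to introduce an auxiliary truncation $A_K(t,x,\xi) := A(t,x,T_K\xi)$, which is globally bounded in $\xi$ by Lemma \ref{intro:lem_bound_on_A} and hence satisfies \ref{reg_thm:ass2} trivially. One then applies Theorem \ref{res:iso_wpt_thm} to $A_K(t,x,\xi) + \theta \nabla_\xi m(|\xi|)$, obtains a solution $u^{\theta,K}$, observes that the $\theta$-coercivity produces $L_m$-bounds on $\nabla u^{\theta,K}$ uniform in $K$, and identifies the weak limit of $A_K(\cdot,\nabla u^{\theta,K})$ via Minty's monotonicity trick to recover the desired $u^\theta$ solving \eqref{res:regularized_problem} with all the claimed properties.
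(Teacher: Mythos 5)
Your overall skeleton coincides with the paper's: apply Theorem \ref{res:iso_wpt_thm} to $A_\theta$ after checking \ref{reg_thm:ass1}--\ref{reg_thm:ass4}, get the energy equality from the isotropic theory, and then extract \ref{unif_est_1}--\ref{unif_est_4} from \eqref{global_energy_regular_problem} combined with \ref{intro:ass_on_A:coercgr}, the identity $\nabla_\xi m(|\xi|)\cdot\xi=m(|\xi|)+m^*(|\nabla_\xi m(|\xi|)|)$, Gr\"onwall and \ref{intro:propNfunct:item55}. Those parts are fine. The genuine gap is in your treatment of the growth bound \ref{reg_thm:ass2}. First, the detour is unnecessary: since $M(t,x,\xi)\le m(|\xi|)$ pointwise, taking convex conjugates gives $m^*(|\eta|)\le M^*(t,x,\eta)$, so \ref{intro:ass_on_A:coercgr} yields $m^*(|A(t,x,\xi)|)\le c\,A(t,x,\xi)\cdot\xi+h(t,x)$; combining this with $m^*(\theta|\nabla_\xi m(|\xi|)|)\le \theta\,\nabla_\xi m(|\xi|)\cdot\xi$, convexity of $m^*$, and the Young upper bound $c\,A_\theta\cdot\xi\le m(c_2|\xi|)+\min(1,c)\,m^*(\tfrac12|A_\theta|)$, one gets $m^*(\tfrac12|A_\theta(t,x,\xi)|)\le c_1 m(c_2|\xi|)+c_1|h(t,x)|$ and hence \ref{reg_thm:ass2} via monotonicity and subadditivity of $(m^*)^{-1}$ (using $h\in L^\infty$). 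This is exactly how the paper verifies \ref{reg_thm:ass2}, and it is the observation your proposal is missing.

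Second, the replacement you propose, $A_K(t,x,\xi):=A(t,x,T_K\xi)$, does not in general satisfy the monotonicity hypothesis \ref{reg_thm:ass4}, so Theorem \ref{res:iso_wpt_thm} cannot be applied to it, and the Minty identification as $K\to\infty$ loses the monotone structure it rests on. Composition of a monotone map with the radial truncation need not be monotone: in $d=2$ take $A(\xi)=\epsilon\xi+J\xi$ with $J(\xi_1,\xi_2)=(\xi_2,-\xi_1)$ and $0<\epsilon<1$, which satisfies Assumption \ref{intro:ass_on_A} with $M(t,x,\xi)$ a suitable multiple of $|\xi|^2$; with $K=1$, $\eta=(1,0)$, $\xi=(1,t)$ one computes
\begin{equation*}
\bigl(A(T_1\xi)-A(T_1\eta)\bigr)\cdot(\xi-\eta)=t\left(1-\frac{1}{\sqrt{1+t^2}}+\epsilon\,\frac{t}{\sqrt{1+t^2}}\right)<0\quad\text{for }t\ll -1.
\end{equation*}
(Lemma \ref{intro:lem_bound_on_A} gives boundedness of $A_K$, but boundedness was never the obstruction.) Moreover, even granting existence for the $K$-problems, your final step ``identifies the weak limit of $A_K(\cdot,\nabla u^{\theta,K})$ via Minty's trick'' would require an energy/limsup identity for this inner limit, i.e.\ a repetition of the machinery the paper develops for the outer limit $\theta\to0$, which the proposal does not supply. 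Replacing the truncation argument by the direct verification of \ref{reg_thm:ass2} sketched above removes both problems; the rest of your argument (strictifying $m$ by adding $s^2$ for \ref{reg_thm:ass4}, coercivity \ref{reg_thm:ass3} from the $\theta$-term, Gr\"onwall and \ref{intro:propNfunct:item55} for the uniform bounds) then goes through as in the paper.
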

\begin{proof}
First, we observe we observe from the definition of the convex conjugate that
\begin{equation}\label{useful_FYineq_eq}
\nabla_{\xi}m(|\xi|) \cdot \xi = m(|\xi|) + m^*(|\nabla_{\xi}m(|\xi|)|).
\end{equation}
We also note that $\nabla_{\xi}m(|\xi|) = m'(|\xi|)\frac{\xi}{|\xi|}$ so that $\nabla_{\xi}m(|\xi|)\xi \geq 0$. Let us check that assumptions of Theorem \ref{res:iso_wpt_thm} are satisfied with operator \eqref{res:regularized_problem_operator} controlled by $N$-function $m$. Assumption \ref{reg_thm:ass1} is fulfilled trivially. To verify \ref{reg_thm:ass2}, we use \eqref{useful_FYineq_eq}, \ref{intro:ass_on_A:coercgr} in Assumption \ref{intro:ass_on_A} and the convexity, to obtain:
\begin{equation}\label{check_ass_oneside}
\setlength{\jot}{6pt}
\begin{split}
c A_{\theta}(t,x,\xi) \cdot \xi &\geq M(t,x,\xi) + M^*(t,x,A(t,x,\xi)) - h(t,x) + c \, \theta \, \nabla_{\xi}m(|\xi|) \cdot \xi \\[2pt]
& \geq 0 + m^*(|A(t,x,\xi)|) - h(t,x) + c \, \theta \,m^*(|\nabla_{\xi}m(|\xi|)|) \\
& \geq 2 \min(1,c) \left( \frac{1}{2}m^*(|A(t,x,\xi)|) + \frac{1}{2} \,m^*(\theta |\nabla_{\xi}m(|\xi|)|) \right) - |h(t,x)|  \\[-1pt]
& \geq 2 \min(1,c) \, m^*\left(\frac{1}{2} \left|A_{\theta}(t,x,\xi)\right| \right) - |h(t,x)|.
\end{split}
\end{equation} 
On the other hand, by Young's inequality
\begin{equation}\label{check_ass_secside}
c A_{\theta}(t,x,\xi) \cdot \xi  \leq  \min(1,c) \, m\left(\frac{c}{\min^2(1,c)}|\xi|\right) + \min(1,c) \, m^*\left(\frac{1}{2} \left|A_{\theta}(t,x,\xi)\right|\right).
\end{equation}
Hence, we combine \eqref{check_ass_oneside} and \eqref{check_ass_secside} to deduce
$$
\min(1,c) \, m^*\left(\frac{1}{2} \left|A_{\theta}(t,x,\xi)\right| \right) \leq \min(1,c) \, m\left(\frac{c}{\min^2(1,c)}|\xi|\right) + |h(t,x)|.
$$
Next, we abbreviate $c_1 = 1/\min(1,c)$ and $c_2 = \frac{c}{\min^2(1,c)}$. Furthermore, since $m^*$ is increasing and convex, then $(m^*)^{-1}$ is increasing and concave. Moreover $(m^*)^{-1}(0) = 0$ so $(m^*)^{-1}$ is subadditive and therefore
$$
\frac{1}{2} \left|A_{\theta}(t,x,\xi)\right|  \leq (m^*)^{-1} \Big( m\left(|\xi|\right) + c_1|h(t,x)|\Big) \leq 
(m^*)^{-1} \big( m\left(|\xi|\right) \big) + (m^*)^{-1} \big(c_1|h(t,x)|\big),
$$
which proves \ref{reg_thm:ass2} since $h \in L^{\infty}(\Omega_T)$. Then, repeating computation in \eqref{check_ass_oneside} and applying \eqref{useful_FYineq_eq} we deduce:
\begin{equation}\label{check_third_ass}
\begin{split}
c A_{\theta}(t,x,\xi) \cdot \xi &\geq M(t,x,\xi) + M^*(t,x,A(t,x,\xi)) - h(t,x) + c \, \theta \, \nabla_{\xi}m(|\xi|) \cdot \xi \\[3pt]
& \geq c \, \theta \, m(|\xi|) - h(t,x),
\end{split}
\end{equation} 
which proves \ref{reg_thm:ass3}. Finally, \ref{reg_thm:ass4} follows easily as the function $m$ can be always assumed to be strictly convex (otherwise, one can add a strictly convex function to $m$). Therefore, Theorem \ref{res:iso_wpt_thm} applies so we conclude that for each $\theta \in (0,1]$ there is a unique solution $u^{\theta}$ as desired. Moreover, energy equality \eqref{global_energy_regular_problem} is valid.\\

\noindent Now, we intend to establish uniform estimates \ref{unif_est_1}--\ref{unif_est_4}. Let $m_1$ be a Young function such that $m_1(|\xi|) \leq M(t,x,\xi)$ as in point \ref{intro:defNf_control_isotropic} in Definition \ref{intro:def_Nfunc}. We estimate by using the H\"older inequality:
\begin{equation}
\begin{split}
\int_{\Omega_t} f(s,x) u^{\theta}(s,x) \diff s \diff x 
\leq 
\|f\|_{\infty} \int_{\Omega_t}  |u^{\theta}(s,x) | \diff s \diff x \leq
\|f\|_{\infty} \int_{\Omega_t}  \left(|u^{\theta}(s,x) |^2 + 1\right) \diff s \diff x.
\end{split}
\end{equation}
Using energy equality \eqref{global_energy_regular_problem} and noting that $A^{\theta}(s,x, \nabla u^{\theta}(s,x)) \cdot \nabla u^{\theta}(s,x) \geq 0$ we deduce that for a.e. $t \in (0,T)$
$$
\int_{\Omega} \left(u^{\theta}(t,x)\right)^2 \diff x \leq \int_{\Omega} \left(u_0(x)\right)^2 \diff x + \|f\|_{\infty} \int_0^t \int_{\Omega}  \left(|u^{\theta}(s,x) |^2 + 1\right) \diff x \diff s.
$$
Therefore, Gr\"onwall's lemma implies that $u^{\theta}$ is uniformly bounded in $L^{\infty}(0,T;L^2(\Omega))$. Moreover, \ref{intro:ass_on_A:coercgr} in Assumption \ref{intro:ass_on_A} leads to the estimate:
\begin{equation*}
\begin{split}
\int_{\Omega_t} M^*(s,x, A(s,x,\nabla u^{\theta}(s,x)))  \diff s \diff x +\int_{\Omega_t} M(s,x, \nabla u^{\theta}(s,x))  \diff s \diff x - \int_{\Omega_t} h(s,x)  \diff s \diff x \leq \\ \leq
c \int_{\Omega_t} A(s,x, \nabla u^{\theta}(s,x)) \cdot \nabla u^{\theta}(s,x) \diff s \diff x.
\end{split}
\end{equation*}
\noindent As $\int_{\Omega} \left(u^{\theta}(t,x)\right)^2 \diff x$ and $\int_{\Omega_t} f(s,x) u^{\theta}(s,x) \diff s \diff x$ are uniformly bounded, we deduce from energy equality \eqref{global_energy_regular_problem} that for a.e. $(t,x) \in \Omega_T$, the quantity
\begin{equation*}
\begin{split}
\int_{\Omega_t} M^*(s,x, A(s,x,\nabla u^{\theta}(s,x)))  \diff s \diff x + \, \int_{\Omega_t} M(s,x, \nabla u^{\theta}(s,x)) \, \diff s \diff x \,+\\ + 
\int_{\Omega_t} \theta \nabla_{\xi} m(|\nabla u^{\theta}(s,x)|) \cdot \nabla u^{\theta}(s,x)  \, \diff s \diff x \leq C(f,h,u_0),
\end{split}
\end{equation*}
the constant $C(f,h,u_0)$ is independent of $\theta$. Due to \ref{intro:propNfunct:item55} in Lemma \ref{intro:lem_prop_Nfunc}, we have that $\{\nabla u^{\theta}\}_{\theta \in (0,1]}$ is uniformly bounded in $\mus$ and $\{A(t,x,\nabla u^{\theta}) \}_{\theta}$ is uniformly bounded in $L_{M^*}(\Omega_T)$. Finally, using \eqref{useful_FYineq_eq} we deduce that sequence $\{ \theta m^*(\nabla_{\xi} m(|\nabla u^{\theta}|)) \}_{\theta \in (0,1]}$ is uniformly bounded in $L^1(\Omega_T)$.
\end{proof}
\noindent Thanks to the uniform bounds established in Lemma \ref{res:approximation_theta_lem}, we can now let $\theta \to 0$ in \eqref{res:regularized_problem}. The starting point for this limiting procedure is the observation that the approximative term vanishes in the limit, which is formulated in the next lemma.
\begin{lem}\label{res:limit_of_regularization}
Under notation and assumptions of Lemma \ref{res:approximation_theta_lem}, for any $\varphi: \Omega_T \mapsto \R^d$ such that $\varphi \in L^{\infty}(\Omega_T; \R^d)$, we have
$$
\lim_{\theta \to 0} \int_{\Omega_T} \theta \nabla_{\xi} m(|\nabla u^{\theta}|) \cdot \varphi \diff t \diff x = 0.
$$
\end{lem}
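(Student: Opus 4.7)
The plan is to reduce the vector estimate to a scalar one and then exploit the uniform $L^1$ bound $\int_{\Omega_T}\theta\,m^{*}(|\nabla_{\xi}m(|\nabla u^{\theta}|)|)\,dt\,dx\le C$ from item \ref{unif_est_4} of Lemma \ref{res:approximation_theta_lem} together with the superlinearity of $m^{*}$.

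First, I would observe the pointwise inequality
\[
|\theta\,\nabla_{\xi}m(|\nabla u^{\theta}|)\cdot\varphi|\;\le\;\|\varphi\|_{L^{\infty}}\,\theta\,|\nabla_{\xi}m(|\nabla u^{\theta}|)|,
\]
so it suffices to show $\int_{\Omega_T}\theta\,|\nabla_{\xi}m(|\nabla u^{\theta}|)|\,dt\,dx\to 0$ as $\theta\to 0$.

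Next I would use the superlinearity of $m^{*}$: since $\lim_{s\to\infty}m^{*}(s)/s=\infty$ by \ref{intro:propNfunct:item4} in Lemma \ref{intro:lem_prop_Nfunc} applied to the Young function $m^{*}$ (which is legitimate by \ref{intro:propNfunct:item2}), for any $R>0$ there exists $S_{R}>0$ such that $s\le m^{*}(s)/R$ whenever $s\ge S_{R}$. Splitting the integrand on the sets $\{|\nabla_{\xi}m(|\nabla u^{\theta}|)|<S_{R}\}$ and its complement gives the pointwise bound
\[
\theta\,|\nabla_{\xi}m(|\nabla u^{\theta}|)|\;\le\;\theta S_{R}\;+\;\tfrac{1}{R}\,\theta\, m^{*}(|\nabla_{\xi}m(|\nabla u^{\theta}|)|).
\]
Integrating over $\Omega_{T}$ and using \ref{unif_est_4} yields
\[
\int_{\Omega_{T}}\theta\,|\nabla_{\xi}m(|\nabla u^{\theta}|)|\,dt\,dx\;\le\;\theta\,S_{R}\,|\Omega_{T}|\;+\;\frac{C}{R}.
\]

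Finally, I would conclude by passing to $\limsup_{\theta\to 0}$, which leaves $C/R$, and then letting $R\to\infty$; both terms vanish. Combined with the first reduction, this delivers the desired limit. I do not expect any significant obstacle here: the only subtlety is ensuring that the superlinearity of $m^{*}$ is available, which is guaranteed because $m^{*}$ is a Young function, and that we peel off $\theta$ in the elementary-level set estimate rather than trying to absorb it through a Young or Hölder inequality (which would not generate the requisite $\theta$ factor on the ``low'' piece).
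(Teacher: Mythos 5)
Your argument is correct, and it takes a slightly different (and in fact more self-contained) route than the paper. The paper splits the domain according to level sets of the gradient, $\Omega_T^R=\{|\nabla u^\theta|\le R\}$: on $\Omega_T^R$ the integrand is bounded by $\theta\,m'(R)$, while on the complement it invokes equi-integrability of $\{\theta\nabla_\xi m(|\nabla u^\theta|)\}_\theta$ (via \ref{intro:propNfunct:item5} in Lemma \ref{intro:lem_prop_Nfunc}, the convexity bound $m^*(\theta z)\le\theta m^*(z)$ and \ref{unif_est_4}), together with the fact that $|\Omega_T\setminus\Omega_T^R|$ is small uniformly in $\theta$, which rests on the uniform $L^1$ bound on $\nabla u^\theta$ coming from \ref{unif_est_2}. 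You instead split according to level sets of the integrand itself and use the superlinearity of $m^*$ quantitatively (a de la Vall\'ee Poussin--type estimate), obtaining the explicit bound $\theta S_R|\Omega_T|+C/R$; this uses only \ref{unif_est_4} and $|\Omega_T|<\infty$, bypasses the equi-integrability lemma and the uniform measure estimate on the bad set, and so is arguably cleaner. The only cosmetic point is your citation of \ref{intro:propNfunct:item4} for the superlinearity of $m^*$: that item concerns the $N$-function $M$, whereas what you actually need is that $m^*$ is a Young function (property \ref{intro:propNfunct:item2}) and hence superlinear by \ref{intro:def_Nfunc_iso_prop3} of Definition \ref{intro:def_Nfunc_iso}; this does not affect the validity of the proof.
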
 
\begin{proof}
This was also proved in \cite{chlebicka2019parabolic} but it was not formulated as a separate result so we provide the proof here. Consider $\Omega_T^R = \{(t,x) \in \Omega_T: |\nabla u^{\theta}| \leq R\}$ and write
\begin{equation}\label{res:splitting_integrals_aux_Estimate}
\int_{\Omega_T} \left|\theta \nabla_{\xi} m(|\nabla u^{\theta}|) \right| =
\int_{\Omega_T^R} \left|\theta \nabla_{\xi} m(|\nabla u^{\theta}|)  \right|+
\int_{\Omega_T\setminus \Omega_T^R} \left|\theta \nabla_{\xi} m(|\nabla u^{\theta}|) \right|.
\end{equation} 
For any $R > 0$, the first term converges to 0 as $\theta \to 0$. Note that by convexity, 
$$
m^*(\theta \nabla_{\xi} m(|\nabla u^{\theta}|)) \leq  m^*(\nabla_{\xi} m(|\nabla u^{\theta}|)) 
$$
so that due to \ref{intro:propNfunct:item5} in Lemma \ref{intro:lem_prop_Nfunc}, sequence $\{\theta \nabla_{\xi} m(|\nabla u^{\theta}|) \}_{\theta}$ is uniformly integrable. Therefore, as $R \to \infty$, the second term in \eqref{res:splitting_integrals_aux_Estimate} tends to 0 and the conclusion follows.
\end{proof}
\noindent The next result deals with the time derivatives of $u^{\theta}$ and will be used to deduce the pointwise convergence.
\begin{lem}\label{res:time_derivative_regularity}
Under notation and assumptions of Lemma \ref{res:approximation_theta_lem}, for every $\theta > 0$, we have $\partial_t u^{\theta} \in \left(W^1E_m(\Omega_T)\right)^*$ where $m$ is defined in Lemma \ref{res:approximation_theta_lem}. Moreover, for all $\varphi \in W^1E_m(\Omega_T)$ we have the following inequality:
\begin{equation}\label{res:time_der_bound}
\left(\partial_t u^{\theta}, \varphi \right) \leq C \| \varphi  \|_{W^1L_m},
\end{equation}
where the constant $C$ is independent of $\theta$.
\end{lem}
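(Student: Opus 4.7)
The plan is to read off $\partial_t u^\theta$ from the equation \eqref{res:regularized_problem} and bound each of the resulting terms by $C \|\varphi\|_{W^1 L_m}$ uniformly in $\theta$, after which the functional extends by density from $C_0^\infty((0,T)\times \Omega)$ (which is dense in $W^1_0 E_m(\Omega_T)$ in the graph norm, cf.\ \ref{prop_lemma:density} in Lemma \ref{intro:properties_orlicz_sob_lemma}) to all of $W^1 E_m(\Omega_T)$. Concretely, for $\varphi \in C_0^\infty((0,T)\times \Omega)$ the weak formulation of \eqref{res:regularized_problem} yields
\begin{equation*}
\left(\partial_t u^\theta, \varphi\right) = -\int_{\Omega_T} A(t,x,\nabla u^\theta)\cdot \nabla \varphi \diff t \diff x - \theta \int_{\Omega_T} \nabla_\xi m(|\nabla u^\theta|)\cdot \nabla \varphi \diff t \diff x + \int_{\Omega_T} g\, \varphi \diff t \diff x,
\end{equation*}
so it suffices to dominate each of the three integrals on the right.

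For the source term, I use the H\"older inequality from Lemma \ref{intro:Hold_Mus_inequality}: since $g \in L^\infty(\Omega_T)$ and $\Omega_T$ is bounded, $g \in L_{m^*}(\Omega_T)$ with a bound independent of $\theta$, giving $\int_{\Omega_T} g \varphi \diff t \diff x \leq 2\|g\|_{L_{m^*}}\|\varphi\|_{L_m}$. For the $A$-term, Lemma \ref{intro:Hold_Mus_inequality} in the $M,M^*$ duality gives
\begin{equation*}
\int_{\Omega_T} A(t,x,\nabla u^\theta)\cdot \nabla \varphi \diff t \diff x \leq 2 \|A(t,x,\nabla u^\theta)\|_{L_{M^*}} \|\nabla \varphi\|_{L_M}.
\end{equation*}
The first factor is uniformly bounded by \ref{unif_est_3} in Lemma \ref{res:approximation_theta_lem}. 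For the second, since $M(t,x,\xi)\leq m(|\xi|)$, the definition \eqref{intro:musor_norm} of the Luxemburg norm directly gives $\|\nabla \varphi\|_{L_M} \leq \|\nabla \varphi\|_{L_m}$.

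The delicate piece is the regularization term: I want to know that $\theta\, \nabla_\xi m(|\nabla u^\theta|)$ is uniformly bounded in $L_{m^*}(\Omega_T)$. The available bound \ref{unif_est_4} controls $\int_{\Omega_T} \theta\, m^*(\nabla_\xi m(|\nabla u^\theta|))\diff t \diff x$, which is not quite the modular of the vector we want. The trick is convexity: because $m^*$ is convex with $m^*(0)=0$, for every $\theta \in (0,1]$ one has
\begin{equation*}
m^*\bigl(\theta\, \nabla_\xi m(|\nabla u^\theta|)\bigr) = m^*\bigl(\theta\, \nabla_\xi m(|\nabla u^\theta|) + (1-\theta)\cdot 0\bigr) \leq \theta\, m^*\bigl(\nabla_\xi m(|\nabla u^\theta|)\bigr),
\end{equation*}
so integrating and applying \ref{unif_est_4} one gets a uniform modular bound, and then \ref{intro:propNfunct:item55} in Lemma \ref{intro:lem_prop_Nfunc} upgrades this to a uniform norm bound in $L_{m^*}(\Omega_T)$. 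Another H\"older estimate yields $\int_{\Omega_T} \theta\, \nabla_\xi m(|\nabla u^\theta|)\cdot \nabla \varphi \diff t \diff x \leq C\|\nabla \varphi\|_{L_m}$, completing the bound \eqref{res:time_der_bound}.

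The main obstacle is precisely this regularization term: one has an $L^1$ control of $\theta\, m^*(\nabla_\xi m(|\nabla u^\theta|))$ but needs to extract an $L^1$ control of $m^*(\theta \nabla_\xi m(|\nabla u^\theta|))$, and the above one-line convexity argument is what makes the two bounds compatible. Once all three pieces are collected, the linear functional $\varphi \mapsto (\partial_t u^\theta, \varphi)$ is bounded on the dense subspace $C_0^\infty((0,T)\times \Omega) \subset W^1 E_m(\Omega_T)$ with constant independent of $\theta$, and a standard extension by continuity places $\partial_t u^\theta$ in $(W^1 E_m(\Omega_T))^*$ with the stated uniform bound.
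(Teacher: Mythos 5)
Your proposal is correct and follows essentially the same route as the paper: read off $\partial_t u^{\theta}$ from the weak formulation of \eqref{res:regularized_problem}, estimate the three terms with the Musielak--Orlicz H\"older inequality (Lemma \ref{intro:Hold_Mus_inequality}) and the uniform bounds of Lemma \ref{res:approximation_theta_lem}, then extend by the norm density of $C_0^{\infty}((0,T)\times\Omega)$ from \ref{prop_lemma:density}. The only (welcome) difference is presentational: you work in the $M,M^*$ duality and transfer $\|\nabla\varphi\|_{L_M}\leq\|\nabla\varphi\|_{L_m}$ rather than using $m^*\leq M^*$ as the paper does, and you spell out the convexity step $m^*(\theta\,\nabla_\xi m(|\nabla u^{\theta}|))\leq\theta\, m^*(\nabla_\xi m(|\nabla u^{\theta}|))$ together with \ref{intro:propNfunct:item55}, which the paper leaves implicit when invoking \ref{unif_est_4}.
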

\begin{proof}
First, let $\varphi \in C^{\infty}_0((0,T) \times \Omega)$. By the weak formulation of \eqref{res:regularized_problem} we have
\begin{equation*}
\begin{split}
-\int_{\Omega_T} u^{\theta}(t,x) \partial_t \varphi(t,x) \diff t \diff x \,
+ &\int_{\Omega_T} A(t,x,\nabla u^{\theta}) \cdot \nabla \varphi(t,x) \diff t \diff x \,+\\ 
&+
\int_{\Omega_T} \theta_n \nabla_{\xi} m(|\nabla u^{\theta}|) \cdot \nabla \varphi \diff t \diff x= 
\int_{\Omega_T} f(t,x) \varphi(t,x) \diff t \diff x.
\end{split}
\end{equation*} 
Thus, we can estimate the left hand side using Lemma \ref{intro:Hold_Mus_inequality} as follows:
\begin{equation*}
\begin{split}
\left|\int_{\Omega_T} u^{\theta}(t,x) \partial_t \varphi(t,x) \diff t \diff x \right| \leq \, & \big\| A(t,x,\nabla u^{\theta}) \big\|_{L_{m^*}} \big\|\nabla \varphi \big\|_{L_m} \,+\\ 
&+
\theta_n \big\|  \nabla_{\xi} m(|\nabla u^{\theta}|) \big\|_{L_{m^*}} \big\|\nabla \varphi \big\|_{L_m} + 
\left|\Omega_T\right| m^*\left(\|f\|_{\infty}\right)  \big\| \varphi \big\|_{L_m}. 
\end{split}
\end{equation*} 
Note that $M(t,x,\xi) \leq m(|\xi|)$ implies $m^*(|\xi|) \leq M^*(t,x,\xi)$ and so,
$$
\big\| A(t,x,\nabla u^{\theta}) \big\|_{L_{m^*}} \leq \big\| A(t,x,\nabla u^{\theta}) \big\|_{L_{M^*}}.
$$
Therefore, we can use uniform bounds provided by Lemma \ref{res:approximation_theta_lem} and this (after application of the Poincar\'e inequality from Lemma \ref{intro:properties_orlicz_sob_lemma}) concludes the proof of \eqref{res:time_der_bound} for $\varphi \in C^{\infty}_0((0,T) \times \Omega)$. The general case follows by the density (in norm!) of $C^{\infty}_0((0,T) \times \Omega)$ in $W^1_0E_m(\Omega_T)$ (cf. \ref{prop_lemma:density} in Lemma \ref{intro:properties_orlicz_sob_lemma}).
\end{proof}
\noindent Finally, note that uniform bounds in Lemma \ref{res:approximation_theta_lem} guarantees the existence of subsequences (that we do not relabel) converging weakly-$\ast$ in appropriate spaces (cf. Lemma \ref{intro:weak_compactness}). We will also need stronger compactness provided by the following result.
\begin{lem}\label{res:rel_comp_conv_ae}
Under notation and assumptions of Lemma \ref{res:approximation_theta_lem}, the sequence $\{u^{\theta} \}_{\theta \in (0,1]}$ is relatively compact in $L^1(0,T; L^1(\Omega))$. In particular, it has a subsequence converging a.e. in $\Omega_T$. 
\end{lem}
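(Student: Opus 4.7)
The plan is to apply an Aubin--Lions--Simon compactness argument, using the spatial Sobolev regularity of $u^\theta$ together with a uniform bound on $\partial_t u^\theta$ in a dual-of-Banach-space that fits into an $L^1_t$ Bochner space. First I would upgrade the modular bound on $\nabla u^\theta$ to a uniform $L^1$ bound: by \ref{intro:defNf_control_isotropic} there is a Young function $m_1$ with $m_1(|\xi|) \le M(t,x,\xi)$, and bound \ref{unif_est_2} of Lemma \ref{res:approximation_theta_lem} gives $\int_{\Omega_T} m_1(|\nabla u^\theta|)\,\diff t \diff x \le C$ uniformly in $\theta$, whence property \ref{intro:propNfunct:item5} of Lemma \ref{intro:lem_prop_Nfunc} yields uniform equi-integrability and boundedness in $L^1(\Omega_T)$. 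The Poincar\'e inequality then gives $\{u^\theta\}$ uniformly bounded in $L^1(0,T; W_0^{1,1}(\Omega))$, and Rellich--Kondrachov supplies $W_0^{1,1}(\Omega) \hookrightarrow\hookrightarrow L^1(\Omega)$.

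Next I would extract from the equation a uniform $L^1_t$ bound on $\partial_t u^\theta$ with values in $(W_0^{1,\infty}(\Omega))^*$. Writing $\partial_t u^\theta = -\nabla\cdot G^\theta + f$ with $G^\theta := A(t,x,\nabla u^\theta) + \theta\nabla_{\xi} m(|\nabla u^\theta|)$, I would use \ref{unif_est_3} together with \ref{intro:propNfunct:item5} applied to $M^*$ to obtain equi-integrability of $\{A(t,x,\nabla u^\theta)\}$ in $L^1(\Omega_T)$; for the regularization term I would combine the convexity inequality $m^*(\theta\eta) \le \theta\,m^*(\eta)$ with bound \ref{unif_est_4} to see that $\{m^*(\theta\nabla_{\xi} m(|\nabla u^\theta|))\}$ is uniformly bounded in $L^1(\Omega_T)$, and then apply \ref{intro:propNfunct:item5} once more to $m^*$. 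Consequently, for each $\eta \in W_0^{1,\infty}(\Omega)$ the scalar map $t \mapsto \int_\Omega u^\theta(t,x)\,\eta(x)\,\diff x$ is absolutely continuous with
\begin{equation*}
\left|\tfrac{d}{dt}\int_\Omega u^\theta \eta\,\diff x \right| \le \bigl(\|G^\theta(t,\cdot)\|_{L^1(\Omega)} + |\Omega|\,\|f\|_\infty\bigr)\|\eta\|_{W^{1,\infty}(\Omega)},
\end{equation*}
so that $\{\partial_t u^\theta\}$ is uniformly bounded in $L^1(0,T; (W_0^{1,\infty}(\Omega))^*)$.

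With the chain $W_0^{1,1}(\Omega) \hookrightarrow\hookrightarrow L^1(\Omega) \hookrightarrow (W_0^{1,\infty}(\Omega))^*$ and the two uniform bounds in hand, I would invoke the classical Aubin--Lions--Simon lemma to conclude relative compactness of $\{u^\theta\}$ in $L^1(0,T; L^1(\Omega)) = L^1(\Omega_T)$, and then extract an a.e.\ convergent subsequence in the standard way. The main technical point lies in the second step: the bound of Lemma \ref{res:time_derivative_regularity} is only in the abstract space-time dual $(W^1 E_m(\Omega_T))^*$, and to match the standard Aubin--Lions template one must instead produce a genuine $L^1_t$-valued bound in a spatial dual, which is exactly what the equi-integrability in $L^1(\Omega_T)$ of both $A(t,x,\nabla u^\theta)$ and $\theta\nabla_{\xi} m(|\nabla u^\theta|)$ delivers.
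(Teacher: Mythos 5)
Your argument is correct in substance and rests on the same skeleton as the paper's proof: an Aubin--Lions type compactness with the chain $W^{1,1}_0(\Omega)\hookrightarrow\hookrightarrow L^1(\Omega)\hookrightarrow Y$, the $L^1(0,T;W^{1,1}_0(\Omega))$ bound coming from the modular estimate \ref{unif_est_2} via the minorant $m_1$, and a time-derivative bound read off from the equation. Where you differ is in the third space and in how the time-derivative bound is produced: the paper takes $Y=W^{-2,r}(\Omega)$ with $r>d$ (so that $W^{2,r}_0(\Omega)\hookrightarrow C^1$) and recycles the abstract estimate of Lemma \ref{res:time_derivative_regularity} in the Musielak--Orlicz dual $(W^1E_m(\Omega_T))^*$, whereas you work with $Y=(W^{1,\infty}_0(\Omega))^*$ and estimate the flux $G^\theta=A(t,x,\nabla u^\theta)+\theta\nabla_\xi m(|\nabla u^\theta|)$ directly in $L^1(\Omega_T)$, using \ref{unif_est_3} together with \ref{intro:propNfunct:item5} for $M^*$, and the convexity inequality $m^*(\theta\eta)\le\theta m^*(\eta)$ with \ref{unif_est_4} for the regularizing term (the latter is exactly the mechanism of Lemma \ref{res:limit_of_regularization}). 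This is a legitimate and arguably more transparent route, since it bypasses the Orlicz--Sobolev duality altogether.

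One caveat you should make explicit: the compactness lemma as quoted in the paper requires, besides the $L^1(0,T;X_0)$ bound and the $L^1(0,T;X_1)$ bound on the time derivatives, an additional bound in $L^q(0,T;X)$ with $q>p$, and it yields compactness only in $L^p$ with $p<q$. With only the two $L^1$ bounds you list, that statement gives nothing ($p<q=1$). Your conclusion is still correct, but it relies on the endpoint case of Simon's result (boundedness in $L^1(0,T;X_0)$ plus time derivatives bounded in $L^1(0,T;X_1)$ already gives relative compactness in $L^1(0,T;B)$, via Ehrling's inequality and the translation criterion), which is a different statement than the one the paper invokes and should be cited or verified as such. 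Alternatively, you can avoid the endpoint case entirely at no cost: estimate \ref{unif_est_1} gives $\{u^\theta\}$ bounded in $L^\infty(0,T;L^2(\Omega))\subset L^2(0,T;L^1(\Omega))$, which is precisely the extra $L^q$ bound ($q=2$) the paper's version of the lemma asks for; this is how the paper closes the argument.
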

\begin{proof}
We recall a version of Aubin-Lions Lemma (cf. \cite{simon1986compact}):\\

\noindent \underline{Aubin-Lions Lemma.} Let $X_0$, $X$ and $X_1$ be Banach spaces such that $X_0$ is compactly embedded in $X$ and $X$ is continuously embedded in $X_1$. Suppose that sequence of functions $\{f_n\}_{n \in \N}$ is bounded in $L^q(0,T; X)$ and $L^1(0,T; X_0)$. Moreover, assume that sequence of distributional time derivatives $\left\{ \partial_t f_n \right\}$ is bounded in $L^1(0,T;X_1)$. Then, $\{f_n\}_{n \in \N}$ is relatively compact in $L^p(0,T; X)$ for any $1 \leq p < q$.\\

\noindent We want to apply this result with $X_0 = W^{1,1}_0(\Omega)$, $X = L^1(\Omega)$ and $X_1 = W^{-2,r}(\Omega)$ for $r$ such that $W^{2,r}_0(\Omega)$ is continuously embedded in $C^1(\Omega)$ ($r > d$ is sufficient, cf. \cite[Corollary 7.11]{gilbarg2015elliptic}). 
\begin{itemize}
\item By Rellich-Kondrachov Theorem (or Arzela-Ascoli Theorem if $d=1$), $X_0$ is compactly embedded in $X$.
\item Let $f \in L^1(\Omega)$. Then, for $\varphi \in W^{2,r}_0(\Omega)$,
$$
\left|\int_{\Omega} f\, \varphi \right|  \leq \|f\|_{L^1} \| \varphi \|_{L^{\infty}} \leq C\|f\|_{L^1}  \| \varphi \|_{W^{2,r}},
$$
for some constant $C$ so that $X$ is continuously embedded in $X_1$.
\item Sequence $\{u^{\theta} \}_{\theta \in (0,1]}$ is uniformly bounded in $L^{\infty}(0,T;L^2(\Omega))$ and $\{\nabla u^{\theta} \}_{\theta \in (0,1]}$ is uniformly bounded in $L_{M^*}(\Omega_T)$. In particular, $\{u^{\theta} \}_{\theta \in (0,1]}$ is uniformly bounded in $L^1(0,T; W^{1,1}_0(\Omega))$ and $L^2(0,T; L^1(\Omega))$.
\item Let $\varphi \in L^{\infty}(0,T; W_0^{2,r}(\Omega))$ with $\big\| \varphi \big\|_{L^{\infty}(0,T; W_0^{2,r}(\Omega))} \leq 1$ and the plan is to prove that $\left(\partial_t u^{\theta}, \varphi \right)$ is uniformly bounded in $\varphi$ and $\theta \in (0,1]$. By the choice of $r$, there is a constant $C$ such that $\big|\varphi\big| \leq C$ and $\big|\nabla \varphi \big| \leq C$. In particular, $\varphi \in W^1_0E_m(\Omega_T)$ and $\big\| \varphi \big\|_{W^1L_m} \leq C$ for some possibly different constant $C$. Using Lemma \ref{res:time_derivative_regularity}, we establish assertion. By duality, this shows that $\partial_t u^{\theta}$ is uniformly bounded in $L^1(0,T; W^{-2,r}(\Omega))$.
\end{itemize}
Aubin-Lions Lemma implies that $\left\{ u^{\theta}\right\}_{\theta \in [0,1)}$ is relatively compact in $L^1(0,T; L^1(\Omega))$.
\end{proof}
\subsection{Equation $u_t = \DIV \alpha + f$ for $\alpha \in L_{M^*}(\Omega_T)$ and $f \in L^{\infty}(\Omega_T)$}\label{subsec:study_of_gen_par_eqn}
In this section we study the equation
$$
u_t = \DIV \alpha + f
$$
or more precisely, the following identity required to be satisfied for all $\varphi \in C^{\infty}_0([0,T) \times \Omega)$:
\begin{equation}\label{res:weak_form_for_sol_without_id}
\begin{split}
&-\int_{\Omega_T} u(t,x) \partial_t \varphi(t,x) \diff t \diff x 
- \int_{\Omega} u_0(x)\varphi(0,x) \diff x + \\ 
&\qquad \qquad \qquad \qquad \qquad+
\int_{\Omega_T} \alpha(t,x) \cdot \nabla \varphi(t,x) \diff t \diff x = 
\int_{\Omega_T} f(t,x) \varphi(t,x) \diff t \diff x,
\end{split}
\end{equation}
which is obtained in Section \ref{existproof:section_label} as the limit of \eqref{res:regularized_problem}. For $u: \Omega_T \to \R$ solving \eqref{res:weak_form_for_sol_without_id}, we write $\widetilde{u}$ to denote its extension:
\begin{equation}\label{res:extension_u}
\widetilde{u}(t,x) = 
\begin{cases}
0  & \mbox{ for } t > T,\\
u(t,x)  & \mbox{ for } t \in (0,T],\\
u_0(x) & \mbox{ for } t \leq 0.
\end{cases}
\end{equation}
We also extend $\alpha$ and $f$ to be zero for $t \in \R \setminus (0,T)$:
\begin{equation}\label{res:extension_Af}
\overline{\alpha}(t,x) = 
\begin{cases}
\alpha(t,x)  & \mbox{ for } t \in (0,T),\\
0 & \mbox{ for } t \in \R \setminus (0,T),
\end{cases}
\qquad \qquad 
\overline{f}(t,x) = 
\begin{cases}
f(t,x)  & \mbox{ for } t \in (0,T),\\
0 & \mbox{ for } t \in \R \setminus (0,T).
\end{cases}
\end{equation}
Our goal is to obtain some form of energy equality which will be crucial in developing the existence theory for \eqref{intro:parabolic_eq}. Classical approach (cf. \cite{chlebicka2019parabolic}) was based on appropriate mollification in space and time which required some continuity assumptions on $M(t,x,\xi)$ both in $t$ and $x$. Below, we show that mollification of the solution $u$ only in space has already Sobolev regularity in space and time.
\begin{lem}\label{res:Sobolev_regularity_solution_molinsp}
Suppose that $u \in V^M_T(\Omega)$, $\alpha \in L_{M^*}(\Omega_T)$ and $f \in L^{\infty}(\Omega_T)$. Consider extensions $\widetilde{u}$, $\overline{\alpha}$ and $\overline{f}$ defined in \eqref{res:extension_u} and \eqref{res:extension_Af}. Then,
\begin{equation}\label{res:extended_weak_form}
-\int_{\Omega} \int_{-T}^T \widetilde{u}(t,x) \partial_t \varphi(t,x) \diff t \diff x =
-\int_{\Omega} \int_{-T}^T  \overline{\alpha}(t,x) \cdot \nabla \varphi(t,x) \diff t \diff x = 
\int_{\Omega} \int_{-T}^T \overline{f}(t,x) \varphi(t,x) \diff t \diff x,
\end{equation}
for arbitrary $\varphi \in C^{\infty}_0((-T,T) \times \Omega)$. Moreover, $\widetilde{u}^{\varepsilon} \in W^{1,1}((-T,T)\times\Omega')$ where $\Omega' \Subset \Omega$. 
\end{lem}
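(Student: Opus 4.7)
The plan is to verify the two claims sequentially, treating the extended weak form as a bookkeeping identity about the jump of $\widetilde{u}$ at $t = 0$, then leveraging it to control the time derivative of $\widetilde{u}^{\epsilon}$. (I read the centered formula \eqref{res:extended_weak_form} as containing a typo and meaning $-\int \widetilde{u}\,\partial_t \varphi + \int \overline{\alpha}\cdot \nabla \varphi = \int \overline{f}\,\varphi$, which is the only statement that is dimensionally correct as an extension of \eqref{res:weak_form_for_sol_without_id}.)

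First I would take an arbitrary $\varphi \in C^{\infty}_0((-T,T)\times \Omega)$ and split the time integral at $t=0$. On $(-T,0]$ one has $\widetilde{u}(t,x) = u_0(x)$, and since $\varphi(-T,\cdot) = 0$ the $t$-integral on this interval reduces to
$$
-\int_{\Omega}\int_{-T}^{0} u_0(x)\,\partial_t \varphi\,\diff t\diff x \;=\; -\int_{\Omega} u_0(x)\,\varphi(0,x)\,\diff x.
$$
The restriction $\varphi|_{[0,T)\times \Omega}$ lies in $C^{\infty}_0([0,T)\times \Omega)$ because its support is compact, avoids $\partial\Omega$, and avoids $t=T$ by construction; hence it is a legitimate test function for \eqref{res:weak_form_for_sol_without_id}. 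Adding the two contributions and using $\overline{\alpha}$, $\overline{f}$ vanishing outside $(0,T)$ delivers the extended identity, with the initial-data term produced precisely by the jump of $\widetilde{u}$ at $t=0$. Note that the (potential) jump of $\widetilde{u}$ at $t=T$ plays no role because $\varphi(T,\cdot) = 0$.

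For the Sobolev regularity of $\widetilde{u}^{\epsilon}$, I fix $\Omega' \Subset \Omega$ and work with $\epsilon < \dist(\Omega', \partial \Omega)$. Since $\widetilde{u} \in L^{\infty}(-T,T; L^2(\Omega))$, Young's convolutional inequality gives $\widetilde{u}^{\epsilon},\,\nabla_x \widetilde{u}^{\epsilon} = \widetilde{u}\ast \nabla \eta_{\epsilon} \in L^1((-T,T)\times \Omega')$ (the bound of $\nabla_x \widetilde{u}^\epsilon$ is crude, of order $\epsilon^{-1}$, but $\epsilon$ is fixed here). To identify $\partial_t \widetilde{u}^{\epsilon}$, I would test against $\psi \in C^{\infty}_0((-T,T)\times \Omega')$: by Fubini,
$$
\int \widetilde{u}^{\epsilon}\,\partial_t \psi \,\diff t \diff x \;=\; \int \widetilde{u}\,\partial_t \psi^{\epsilon}\,\diff t \diff x,
$$
and $\psi^{\epsilon} = \eta_{\epsilon}\ast \psi$ is supported in $(-T,T)\times \Omega$ by the choice of $\epsilon$, so the extended weak form applies and yields
$$
\int \widetilde{u}^{\epsilon}\,\partial_t \psi\,\diff t\diff x \;=\; \int \overline{\alpha}\cdot \nabla \psi^{\epsilon}\,\diff t\diff x - \int \overline{f}\,\psi^{\epsilon}\,\diff t\diff x \;=\; \int (\overline{\alpha})^{\epsilon}\cdot \nabla \psi - \int (\overline{f})^{\epsilon}\,\psi.
$$
This identifies $\partial_t \widetilde{u}^{\epsilon} = -\DIV (\overline{\alpha})^{\epsilon} + (\overline{f})^{\epsilon}$ distributionally. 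Since $\overline{\alpha} \in L_{M^*}(\Omega_T) \subset L^1(\Omega_T)$ (by Lemma \ref{intro:lem_prop_Nfunc}\ref{intro:propNfunct:item5}) and $\overline{f} \in L^{\infty}$, Young's inequality bounds both terms in $L^1((-T,T)\times \Omega')$, hence $\widetilde{u}^{\epsilon} \in W^{1,1}((-T,T)\times \Omega')$.

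The main delicate point is the first step: making sure that only the jump of $\widetilde{u}$ at $t=0$ is picked up, and that it is cancelled exactly by the initial-data term already present in \eqref{res:weak_form_for_sol_without_id}. This is what makes extension by $u_0$ (rather than by $0$) on $(-T,0)$ the natural choice, and ensures that the distributional time derivative of the extension contains no Dirac mass at $t=0$. The second part is then a routine mollification argument, with the extended weak form doing the work of converting the nonsmoothness of $\widetilde{u}$ in time into the $L^1$-regularity of $\DIV (\overline{\alpha})^{\epsilon}$ and $(\overline{f})^{\epsilon}$.
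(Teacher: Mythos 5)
Your proposal is correct and follows essentially the same route as the paper: split the time integral at $t=0$ so that the jump of $\widetilde{u}$ produces exactly the initial-data term of \eqref{res:weak_form_for_sol_without_id}, then test the extended identity with spatially mollified test functions to conclude $\partial_t \widetilde{u}^{\epsilon} \in L^1((-T,T)\times\Omega')$ (and your reading of the typo in \eqref{res:extended_weak_form} is indeed the intended statement). The only slip is an inconsequential sign in the final identification, which should read $\partial_t \widetilde{u}^{\epsilon} = \DIV (\overline{\alpha})^{\epsilon} + (\overline{f})^{\epsilon}$, consistent with $u_t = \DIV\alpha + f$; this does not affect the $L^1$ bound or the $W^{1,1}$ conclusion.
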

\begin{proof}
To verify \eqref{res:extended_weak_form}, let $\varphi \in C^{\infty}_0((-T,T) \times \Omega)$. We compute using \eqref{res:weak_form_for_sol_without_id}:
\begin{equation*}
\begin{split}
& -\int_{\Omega} \int_{-T}^T \widetilde{u}(t,x) \partial_t \varphi(t,x) \diff t \diff x  = \\ 
& \qquad\qquad \qquad = -\int_{\Omega} \int_{-T}^0 \widetilde{u}(t,x) \partial_t \varphi(t,x) \diff t \diff x  -\int_{\Omega} \int_{0}^T \widetilde{u}(t,x) \partial_t \varphi(t,x) \diff t \diff x = \\  
& \qquad\qquad \qquad =
-\int_{\Omega}  u_0(x) \varphi(0,x) \diff x
-\int_{\Omega} \int_{0}^T u(t,x) \partial_t \varphi(t,x) \diff t \diff x =
\\
& \qquad\qquad \qquad =-\int_{\Omega} \int_{-T}^T  \overline{\alpha}(t,x) \cdot \nabla\varphi(t,x) \diff t \diff x + 
\int_{\Omega} \int_{-T}^T \overline{f}(t,x) \varphi(t,x) \diff t \diff x.
\end{split}
\end{equation*}
Mollifying \eqref{res:extended_weak_form} in space (by testing with mollified test function), we deduce $\partial_t u^{\varepsilon} \in L^1((-T,T) \times \Omega')$ proving the Sobolev regularity in time. Asserted regularity in space is obvious.
\end{proof}
\begin{rem}
Extension procedure above can be applied to obtain that $u^{\varepsilon} \in  
W^{1,1}((-M,T)\times\Omega')$ for any $0 < M < T$. However, we only need Sobolev regularity on $(-\delta,T)\times\Omega'$ for some $\delta > 0$ which can be arbitrarily small. 
\end{rem}
\begin{lem}[Local energy equality]\label{res:local_energy_equality_form}
Suppose that $u \in V^M_T(\Omega)$ is a solution to \eqref{res:weak_form_for_sol_without_id} with $\alpha \in L_{M^*}(\Omega_T)$, $f \in L^{\infty}(\Omega_T)$ and Assumption \ref{intro:ass_on_M} is satisfied. Then, for arbitrary $k \in \mathbb{N}$, for arbitrary $\psi \in C_0^{\infty}(\Omega)$ fulfilling $0\leq \psi \leq 1$ and for a.e. $t \in (0,T)$, the following energy equality is satisfied:
\begin{equation}\label{res:local_energy_equality}
\begin{split}
&\int_{\Omega} \psi(x) \big[G_k(u(t,x)) - G_k(u_0(x)) \big] \diff x=\\ 
& \qquad \qquad =
- \int_0^t \int_{\Omega} \alpha(s,x) \cdot \nabla\left[T_k(u(s,x))\, \psi(x) \right] \diff x \diff s + \int_0^t \int_{\Omega} f(s,x) \,T_k(u(s,x)) \,\psi(x) \diff x \diff s,
\end{split}
\end{equation}
where the function $G_k$ and the function $T_k$ are defined in Definition \ref{res:trunc_def}.
\end{lem}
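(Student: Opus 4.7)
The plan is to mollify $u$ in the spatial variable only, use the Sobolev regularity of the extension $\widetilde{u}^\epsilon$ provided by Lemma \ref{res:Sobolev_regularity_solution_molinsp} to derive the claimed identity at the mollified level by testing against $T_k(\widetilde{u}^\epsilon)\psi$, and finally pass to the limit $\epsilon\to 0$ using the modular-density statement of Theorem \ref{res:approx_theorem}. Restricting the mollification to the spatial variable is essential because it avoids any continuity hypothesis for $M(t,x,\xi)$ in $t$.

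Fix $\Omega' \Subset \Omega$ with $\supp \psi \subset \Omega'$. By Lemma \ref{res:Sobolev_regularity_solution_molinsp}, $\widetilde{u}^\epsilon \in W^{1,1}((-T,T) \times \Omega')$ and $\partial_t \widetilde{u}^\epsilon = \DIV \overline{\alpha}^\epsilon + \overline{f}^\epsilon$ holds pointwise a.e. I would multiply this identity by $T_k(\widetilde{u}^\epsilon)\psi \in W^{1,1}$ ($T_k$ is Lipschitz), apply the Sobolev chain rule $T_k(\widetilde{u}^\epsilon)\,\partial_t \widetilde{u}^\epsilon = \partial_t G_k(\widetilde{u}^\epsilon)$, integrate over $(-\delta, t) \times \Omega$ for a small $\delta > 0$ and a.e. $t \in (0,T)$, and integrate by parts in $x$ (no boundary term since $\psi \in C_0^\infty(\Omega)$). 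Using $\widetilde{u}^\epsilon(-\delta,\cdot) = u_0^\epsilon$ and the fact that $\overline{\alpha}^\epsilon, \overline{f}^\epsilon$ vanish on $(-\delta,0)$, this yields
\begin{equation*}
\int_\Omega \psi\bigl[G_k(u^\epsilon(t)) - G_k(u_0^\epsilon)\bigr]\diff x
= -\int_0^t\!\int_\Omega \alpha^\epsilon \cdot \nabla[T_k(u^\epsilon)\psi] \diff x\diff s
+ \int_0^t\!\int_\Omega f^\epsilon\, T_k(u^\epsilon)\psi \diff x\diff s.
\end{equation*}

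Passing to the limit $\epsilon \to 0$ on the left side is routine: $u^\epsilon(t) \to u(t)$ and $u_0^\epsilon \to u_0$ in $L^2(\Omega)$ (the former for a.e.\ $t$), and $G_k$ is Lipschitz. The $f$-term converges by dominated convergence, using $\|f^\epsilon\|_\infty \leq \|f\|_\infty$, the a.e.\ convergence of $f^\epsilon \to f$, and the uniform boundedness of $T_k(u^\epsilon)\psi$.

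The main obstacle is the flux term, because without continuity of $M^*$ in $(t,x)$ one cannot force $\alpha^\epsilon \to \alpha$ in any useful topology on $L_{M^*}(\Omega_T)$. I would circumvent this by using the evenness of the mollifier $\eta$ to transfer the spatial convolution from $\alpha^\epsilon$ to the test factor:
\begin{equation*}
\int_0^t\!\int_\Omega \alpha^\epsilon \cdot \nabla[T_k(u^\epsilon)\psi] \diff x\diff s
= \int_0^t\!\int_\Omega \alpha \cdot \nabla\bigl[(T_k(u^\epsilon)\psi)^\epsilon\bigr] \diff x\diff s,
\end{equation*}
which is legitimate since, for sufficiently small $\epsilon$, the function $(T_k(u^\epsilon)\psi)^\epsilon$ is smooth and compactly supported in $\Omega$. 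Now Theorem \ref{res:approx_theorem} gives $\nabla[(T_k(u^\epsilon)\psi)^\epsilon] \armus{M} \nabla[T_k(u)\psi]$, and since $\alpha \in L_{M^*}(\Omega_T)$ modularly converges trivially to itself, Corollary \ref{intro:hold_conv_inmod} delivers convergence of the right-hand integral to $\int_0^t\int_\Omega \alpha \cdot \nabla[T_k(u)\psi]$. Combining the three limits then yields \eqref{res:local_energy_equality}.
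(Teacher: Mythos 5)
Your argument is correct and follows essentially the same route as the paper: spatial-only mollification plus the negative-time extension to recover the initial datum, transferring the convolution onto $\alpha$, and then Theorem \ref{res:approx_theorem} together with Corollary \ref{intro:hold_conv_inmod} for the flux term. The only deviation is procedural: the paper inserts the smooth test function $\bigl(S^{\delta}\bigl(T_k(\widetilde{u}^{\epsilon})\psi\,\gamma^{\tau}_{-\eta,\beta}\bigr)\bigr)^{\epsilon}$ into the weak formulation and removes the parameters $\delta,\tau$ first, whereas you use $\partial_t \widetilde{u}^{\epsilon}\in L^1$ from Lemma \ref{res:Sobolev_regularity_solution_molinsp} to test the pointwise mollified equation directly with $T_k(\widetilde{u}^{\epsilon})\psi$ -- a legitimate shortcut that bypasses the time mollification and temporal cut-off.
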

\begin{proof}
For $s_1, s_2 \in \R$ and $\tau > 0$ we define the approximation of $\mathds{1}_{\left[s_1, s_2\right]}$:
$$
\gamma^{\tau}_{s_1, s_2} (s) = 
\begin{cases}
0 & \mbox{ for } s \leq s_1 - \tau \mbox{ or } s \geq s_2 + \tau,\\
1 & \mbox{ for } s \in [s_1, s_2],\\
\mbox{affine} & \mbox{ for } s \in [s_1 - \tau, s_1] \cup [s_2, s_2 + \tau].
\end{cases} 
$$
Let $\psi \in C_0^{\infty}(\Omega)$, $k \in \N$, $\varepsilon, \delta, \tau$ be small positive parameters and $\eta, \beta \in (0,T)$. Consider test function in \eqref{res:extended_weak_form}: 
$$
\varphi^{\delta,\tau,\varepsilon}_{\eta,\beta}(t,x) = \Big(S^{\delta}\Big(T_k(\widetilde{u}^{\varepsilon}(t,x)) \, \psi(x) \, \gamma^{\tau}_{-\eta, \beta} (t) \Big)\Big)^{\varepsilon} \in C^{\infty}_0((-T,T) \times \Omega),
$$
see Definitions \ref{res:mol_in_sp} and \ref{res:mol_in_ti} for mollification operators and Definition \ref{res:trunc_def} for truncation $T_k$. Note that since $\psi \in C_0^{\infty}(\Omega)$, mollification in space is well-defined for sufficiently small $\varepsilon>0$. \\

\noindent Now, we want to take limits in \eqref{res:extended_weak_form}: first $\delta \to 0$, then $\tau \to 0$ and finally $\varepsilon \to 0$. We denote:
\begin{equation*}
\begin{split}
&A^{\delta,\tau,\varepsilon}_{\eta,\beta} = -\int_{\Omega} \int_{-T}^T \widetilde{u}(t,x) \, \partial_t \varphi^{\delta,\tau,\varepsilon}_{\eta,\beta}(t,x) \diff t \diff x, \\
&B^{\delta,\tau,\varepsilon}_{\eta,\beta} = -\int_{\Omega} \int_{-T}^T  \overline{\alpha}(t,x) \cdot \nabla \varphi^{\delta,\tau,\varepsilon}_{\eta,\beta}(t,x) \diff t \diff x, \\
&C^{\delta,\tau,\varepsilon}_{\eta,\beta} = \int_{\Omega} \int_{-T}^T \overline{f}(t,x) \, \varphi^{\delta,\tau,\varepsilon}_{\eta,\beta} \diff t \diff x.
\end{split}
\end{equation*}
and we study each term separately.\\

\noindent \underline{Term $A^{\delta,\tau,\varepsilon}_{\eta,\beta}$.} Note that Sobolev derivatives and mollification commute so using Sobolev regularity in time from Lemma \ref{res:Sobolev_regularity_solution_molinsp}:
$$
A^{\delta,\tau,\varepsilon}_{\eta,\beta} = \int_{\Omega} \int_{-T}^T  \partial_t\widetilde{u}^{\varepsilon} (t,x) \Big(S^{\delta}\Big(T_k(\widetilde{u}^{\varepsilon}(t,x)) \psi(x) \gamma^{\tau}_{-\eta, \beta} (t) \Big)\Big)\diff t \diff x.
$$
Using Dominated Convergence (we still have $\varepsilon > 0$),
$$
\lim_{\tau \to 0} \lim_{\delta \to 0} A^{\delta,\tau,\varepsilon}_{\eta,\beta} =\int_{\Omega} \int_{-\eta}^{\beta}  \partial_t\widetilde{u}^{\varepsilon} (t,x) \, T_k(\widetilde{u}^{\varepsilon}(t,x)) \, \psi(x) \, \diff t \diff x =: A^{\varepsilon}_{\eta,\beta} .
$$
As function $G(s) = \int_0^s T_k(\sigma) \diff \sigma$ is $C^1$ with uniformly bounded derivative so standard chain rule for Sobolev maps \cite[Theorem 7.8]{gilbarg2015elliptic} together with Sobolev regularity in time from Lemma \ref{res:Sobolev_regularity_solution_molinsp} shows that $G(\widetilde{u}^{\varepsilon}(t,x)) \psi(x)$ is in $W^{1,1}((-T,T) \times \Omega)$, in particular it has Sobolev derivative in time. Moreover,
$$
\partial_t G(\widetilde{u}^{\varepsilon}(t,x)) = 
T_k(\widetilde{u}^{\varepsilon}(t,x))\, \partial_t\widetilde{u}^{\varepsilon}(t,x)
$$
Therefore, we can write:
$$
A^{\varepsilon}_{\eta,\beta} = \int_{\Omega} \int_{-\eta}^{\beta} \partial_t G\left(\widetilde{u}^{\varepsilon}(t,x) \right) \, \diff t  \, \psi(x) \, \diff x.
$$
Now, using absolute continuity on lines for Sobolev maps \cite[Theorem 4.21]{evans2015measure}, fundamental theorem of calculus applies for a.e. $x \in \Omega$ and $\eta, \beta \in (0,T)$ so we obtain
$$
A^{\varepsilon}_{\eta,\beta} = \int_{\Omega} \left[G_k\left(\widetilde{u}^{\varepsilon}(\beta,x) \right) -
G_k\left(\widetilde{u}^{\varepsilon}(-\eta,x) \right) \right]  \, \psi(x) \, \diff x.
$$
However, using definition of extension \eqref{res:extension_u}, this can be rewritten as
$$
A^{\varepsilon}_{\eta,\beta} = \int_{\Omega} \left[G_k\left(\widetilde{u}^{\varepsilon}(\beta,x) \right) -
G_k\left({u}^{\varepsilon}_0(x) \right) \right]  \, \psi(x) \, \diff x.
$$
Note that this step would not be achieved without extension for negative times as then, absolute continuity of Sobolev functions could be only applied for almost all times in $(0,T)$. Finally, using a.e. convergence of mollification and Dominated Convergence Theorem,
$$
\lim_{\varepsilon \to 0} A^{\varepsilon}_{\eta,\beta} = \int_{\Omega} \left[G_k\left(\widetilde{u}(\beta,x) \right) -
G_k\left({u}_0(x) \right) \right]  \, \psi(x) \, \diff x
$$
for almost all $\beta > 0$.\\

\noindent \underline{Term $B^{\delta,\tau,\varepsilon}_{\eta,\beta}$.} First, we use commutating properties of mollification to write:
\begin{equation*}
\begin{split}
B^{\delta,\tau,\varepsilon}_{\eta,\beta} &= -\int_{\Omega} \int_{-T}^T  \overline{\alpha}^{\varepsilon}(t,x) \cdot \nabla S^{\delta}\Big(T_k(\widetilde{u}^{\varepsilon}(t,x)) \psi(x) \gamma^{\tau}_{-\eta, \beta} (t) \Big) \diff t \diff x \\
&= \int_{\Omega} \int_{-T}^T  \DIV \overline{\alpha}^{\varepsilon}(t,x)\psi(x)~S^{\delta}\Big(T_k(\widetilde{u}^{\varepsilon}(t,x))  \gamma^{\tau}_{-\eta, \beta} (t) \Big) \diff t \diff x.
\end{split}
\end{equation*}
Note that as $\delta \to 0$ and $\tau \to 0$, $S^{\delta}\Big(T_k(\widetilde{u}^{\varepsilon}(t,x))  \gamma^{\tau}_{-\eta, \beta} (t) \Big) \to T_k(\widetilde{u}^{\varepsilon}(t,x))  \mathds{1}_{[-\eta, \beta]} (t) $ a.e. in $(-T,T) \times \Omega'$ for $\Omega' \Subset \Omega$. As $ \DIV \overline{\alpha}^{\varepsilon}(t,x) \psi(x) \in L^1(0,T; C_0^{\infty}(\Omega))$, we use Dominated Convergence Theorem to obtain
$$
\lim_{\tau \to 0} \lim_{\delta \to 0} B^{\delta,\tau,\varepsilon}_{\eta,\beta} = \int_{\Omega} \int_{-\eta}^\beta  \DIV \overline{\alpha}^{\varepsilon}(t,x)\psi(x)~T_k(\widetilde{u}^{\varepsilon}(t,x))  \diff t \diff x := B^{\varepsilon}_{\eta,\beta}.
$$  
Then, we write:
$$
B^{\varepsilon}_{\eta,\beta} = - \int_{\Omega} \int_{-\eta}^\beta  \overline{\alpha}(t,x)\cdot \nabla\left(\psi(x)T_k(\widetilde{u}^{\varepsilon}(t,x))\right)^{\varepsilon} \diff t \diff x.
$$
Due to Theorem \ref{res:approx_theorem}, $\nabla\left(\psi(x)T_k(\widetilde{u}^{\varepsilon}(t,x))\right)^{\varepsilon} \armus{M} \nabla\left(\psi(x)T_k(\widetilde{u}(t,x))\right)$ so using Corollary \ref{intro:hold_conv_inmod} we finally conclude
$$
B^{\eta,\tau,\varepsilon}_{\eta,\beta} \to - \int_{\Omega} \int_{-\eta}^\beta  \overline{\alpha}(t,x)\cdot \nabla\left(\psi(x)T_k(\widetilde{u}(t,x))\right) \diff t \diff x= 
- \int_{\Omega} \int_{0}^\beta  \alpha(t,x)\cdot \nabla\left(\psi(x)T_k(u(t,x))\right) \diff t \diff x.
$$

\noindent \underline{Term $C^{\delta,\tau,\varepsilon}_{\eta,\beta}$.} This is the easiest part. Note that $\varphi^{\delta,\tau,\varepsilon}_{\eta,\beta} \to T_k(\widetilde{u}(t,x)) \psi(x) \mathds{1}_{[-\eta, \beta]} (t) $ a.e. in $(-T,T)\times \Omega$ as $\delta \to 0$, $\tau \to 0$ and $\varepsilon \to 0$. Moreover, since $f \in L^{\infty}(\Omega_T)$ and $\left|\varphi^{\delta,\tau,\varepsilon}_{\eta,\beta} \right| \leq k$, we use Dominated Convergence Theorem to deduce
$$
C^{\delta,\tau,\varepsilon}_{\eta,\beta} \to 
 \int_{\Omega} \int_{-\eta}^{\beta} \overline{f}(t,x) T_k(\widetilde{u}(t,x)) \psi(x) \diff t \diff x =  \int_{\Omega} \int_{0}^{\beta} f(t,x) T_k(u(t,x)) \psi(x) \diff t \diff x.
$$
Finally, we obtain \eqref{res:local_energy_equality} for $t = \beta$ concluding the proof.
\end{proof}
\begin{rem}\label{res:local_energy_equality_form_reg_probl}
The same energy equality as \eqref{res:local_energy_equality} is satisfied by the solution to \eqref{res:regularized_problem}. Indeed, as the operator \eqref{res:regularized_problem_operator} is controlled by a Young function, Assumption \ref{intro:ass_on_M} is satisfied. Therefore, for $\psi \in C_0^{\infty}(\Omega)$ such that $0 \leq \psi(x) \leq 1$ and a.e. $t \in (0,T)$: 
\begin{equation}\label{res:local_energy_equality_regularized_problem}
\begin{split}
&\int_{\Omega} \psi(x) \left[G_k(u^{\theta}(t,x)) - G_k(u_0(x))\right] \diff x=\\ 
&  =
- \int_0^t \int_{\Omega} A_{\theta}(s,x,\nabla u^{\theta}) \cdot \nabla\left[T_k(u^{\theta}(s,x))\, \psi(x) \right] \diff x \diff s + \int_0^t \int_{\Omega} f(s,x) \,T_k(u^{\theta}(s,x)) \,\psi(x) \diff x \diff s.
\end{split}
\end{equation}
Note that $u^{\theta}$ also satisfies the global energy equality \eqref{global_energy_regular_problem}, see Lemma \ref{res:approximation_theta_lem}.
\end{rem}
\subsection{Local version of monotonicity method}\label{sect:monotonicity_trick}
The following procedure allows us to identify weak-$\ast$ limit of $A(t,x,\nabla u_n)$. We formulate here its local version and provide the proof that is almost identical to the global case presented in \cite[Lemma A.5]{chlebicka2019parabolic}. 
\begin{lem}\label{res:monot_trick}
Let $A$ satisfy Assumption \ref{intro:ass_on_A} and $M$ be an $N$-function. Assume that there are $\alpha \in L_{M^*}(\Omega_T; \R^d)$ and $\xi \in L_{M}(\Omega_T; \R^d)$ such that 
\begin{equation}\label{res:monot_trick_cond}
\int_{\Omega_T} \left(\alpha - A(t,x,\eta)\right)\cdot(\xi - \eta)\, \psi(x)\diff t \diff x \geq 0
\end{equation}
for all $\eta \in L^{\infty}(\Omega_T; \R^d)$ and $\psi \in C_0^{\infty}(\Omega)$ with $0 \leq \psi \leq 1$. Then,
$$
A(t,x,\xi) = \alpha(t,x) \mbox{ a.e. in } \Omega_T.
$$
\end{lem}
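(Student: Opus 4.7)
The strategy is to execute the classical Minty monotonicity trick, adapted to the Musielak--Orlicz setting and to the localization $\psi$, along the lines of \cite[Lemma A.5]{chlebicka2019parabolic}. I will test \eqref{res:monot_trick_cond} with bounded perturbations of a truncation of $\xi$, take the truncation level to infinity, and finally let the perturbation shrink to zero in order to identify $\alpha$ with $A(t,x,\xi)$ pointwise.

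First I fix an arbitrary direction $h \in L^{\infty}(\Omega_T; \R^d)$, a small parameter $\sigma > 0$, and a truncation level $k \in \N$. Since $\xi$ lies only in $L_M$, I set $\xi_k := \xi\,\mathds{1}_{\{|\xi|\leq k\}}$ and use $\eta^{\pm}_{k,\sigma} := \xi_k \pm \sigma h \in L^{\infty}(\Omega_T;\R^d)$ as a test field in \eqref{res:monot_trick_cond}. Computing $\xi - \eta^{\pm}_{k,\sigma} = \xi\,\mathds{1}_{\{|\xi|>k\}} \mp \sigma h$, the assumed inequality splits into a piece supported on $\{|\xi|>k\}$ and a piece involving only $h$.

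Next I pass $k \to \infty$. On $\{|\xi|>k\}$ one has $\eta^{\pm}_{k,\sigma} = \pm\sigma h$ and hence $A(t,x,\eta^{\pm}_{k,\sigma}) = A(t,x,\pm\sigma h) \in L^{\infty}$ by Lemma \ref{intro:lem_bound_on_A}. The Young inequality in Musielak--Orlicz spaces (\ref{intro:Hold_Mus_inequality_Y} of Lemma \ref{intro:Hold_Mus_inequality}) gives $\alpha\cdot\xi \in L^1(\Omega_T)$, and the superlinearity of $m_1$ from \ref{intro:defNf_control_isotropic}, together with boundedness of $\Omega_T$, yields $\xi \in L^1(\Omega_T)$, so the piece supported on $\{|\xi|>k\}$ vanishes in the limit. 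For the remaining integral, Carath\'eodory continuity \ref{intro:ass_on_A:continuity} gives the pointwise convergence $A(t,x,\eta^{\pm}_{k,\sigma}) \to A(t,x,\xi\pm\sigma h)$, and equi-integrability in $L_{M^*}$ --- required to apply Vitali's theorem (Theorem \ref{intro:vitali_for_mo}) --- is extracted from the coercivity/growth bound \ref{intro:ass_on_A:coercgr} for the modularly bounded family $\{\xi_k\pm\sigma h\}_k \subset L_M$. The outcome is
\begin{equation*}
\pm \int_{\Omega_T} \bigl(\alpha - A(t,x,\xi\pm\sigma h)\bigr)\cdot h\,\psi \diff t \diff x \leq 0.
\end{equation*}

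The passage $\sigma \to 0^+$ is the technical crux: although Carath\'eodory continuity gives $A(t,x,\xi\pm\sigma h) \to A(t,x,\xi)$ almost everywhere, the lack of an $L^{\infty}$ bound on $\xi$ blocks any naive use of dominated convergence. The main point is to establish equi-integrability of $\{A(t,x,\xi\pm\sigma h)\}_{\sigma\in(0,1]}$ in $L_{M^*}$ from \ref{intro:ass_on_A:coercgr} applied to the uniformly modularly bounded family $\{\xi\pm\sigma h\}_{\sigma}\subset L_M$, after which Vitali's theorem delivers the convergence of the integrals. Combining the inequalities with the plus and the minus signs then yields
\[
\int_{\Omega_T} \bigl(\alpha - A(t,x,\xi)\bigr)\cdot h\,\psi \diff t \diff x = 0
\]
for every $h \in L^{\infty}(\Omega_T;\R^d)$ and every admissible cutoff $\psi$. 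Varying $h$ over the coordinate directions multiplied by arbitrary $L^{\infty}$ scalars and exhausting $\Omega$ by such $\psi$ finally gives $A(t,x,\xi) = \alpha(t,x)$ a.e.\ in $\Omega_T$.
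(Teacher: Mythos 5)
Your overall plan (perturb around a truncation, remove the truncation, shrink the perturbation, then vary the direction) is the right family of ideas, but the order in which you take the limits creates a genuine gap. By sending $k\to\infty$ \emph{before} shrinking the perturbation you are forced to integrate $A(t,x,\xi\pm\sigma h)$ against $h\,\psi$ over all of $\Omega_T$, i.e.\ to evaluate $A$ at the unbounded field $\xi\pm\sigma h$ and to know at least that $A(t,x,\xi\pm\sigma h)\in L^1(\Omega_T)$, with enough uniformity in $k$ and in $\sigma$ to pass to the limit. The justification you offer — ``equi-integrability in $L_{M^*}$ extracted from \ref{intro:ass_on_A:coercgr} for the modularly bounded family $\{\xi_k\pm\sigma h\}_k$'' — does not hold in this setting. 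First, $\xi\in L_M(\Omega_T)$ only guarantees $\intmus{\xi/\lambda}<\infty$ for \emph{some} $\lambda>0$; the modulars $\intmus{\xi_k\pm\sigma h}$ at scale $1$ need not be bounded uniformly in $k$ (they increase to $\intmus{\xi\pm\sigma h}$, which may be infinite). Second, even granting a modular bound on the arguments, \ref{intro:ass_on_A:coercgr} controls $M^*(t,x,A(t,x,\zeta))$ only through $c\,A(t,x,\zeta)\cdot\zeta+h$, and absorbing the right-hand side by Fenchel--Young costs a multiplicative constant inside $M$ (one gets $M^*(t,x,A(t,x,\zeta))\le 2c\,M(t,x,2c\,\zeta)+2h$); without a $\Delta_2$-type condition — which this paper deliberately avoids — such rescaled modulars are not controlled by $\intmus{\xi/\lambda}$. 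The same objection applies verbatim to your $\sigma\to0$ step, where you again invoke uniform modular bounds for $\{\xi\pm\sigma h\}_\sigma$. Note that the integrability of $A(t,x,\xi)$ is exactly what the lemma delivers a posteriori (since $\alpha\in L_{M^*}$), so assuming it in the proof would be circular.

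The paper's proof is arranged precisely so that $A$ is never evaluated at an unbounded argument. It tests with $\eta=\xi\,\mathds{1}_{\Omega_T^i}+h\,z\,\mathds{1}_{\Omega_T^j}$, $j<i$, where $\Omega_T^k=\{|\xi|\le k\}$: on the complement of $\Omega_T^i$ one has $\eta=0$ and \ref{intro:assumA_vanish} kills $A$, leaving only $\alpha\cdot\xi\,\psi\in L^1$ (by Lemma \ref{intro:Hold_Mus_inequality}), which vanishes as $i\to\infty$; on the \emph{fixed} set $\Omega_T^j$ the argument $\xi+hz$ is bounded, so Lemma \ref{intro:lem_bound_on_A} gives an $L^\infty$ bound and the passage $h\to0$ is dominated convergence; the choice $z=-\frac{A(t,x,\xi)-\alpha}{|A(t,x,\xi)-\alpha|}\mathds{1}_{A(t,x,\xi)-\alpha\neq0}$ then yields $A(t,x,\xi)=\alpha$ a.e.\ on $\Omega_T^j\cap\supp\psi$, and $j,\psi$ are arbitrary. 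To repair your argument you would have to keep the conclusion localized on $\{|\xi|\le j\}$ (e.g.\ take $h$ supported there, or perturb only there) instead of first letting $k\to\infty$ — which essentially reproduces the paper's proof.
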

\begin{proof}
Consider subsets $\Omega_T^k = \{ (t,x) \in \Omega_T: |\xi(t,x)| \leq k \}$ and note that if $j < i$ then $\Omega_T^j \subset \Omega_T^i$. We use the assumption \eqref{res:monot_trick_cond} with $
\eta = \xi \mathds{1}_{\Omega_T^i} + h\,z\,\mathds{1}_{\Omega_T^j}$
where $h > 0$ and $z \in L^{\infty}(\Omega_T; \R^d)$ and we obtain
$$
\int_{\Omega_T} \left(\alpha - A(t,x,\xi \mathds{1}_{\Omega_T^i} + h\,z\,\mathds{1}_{\Omega_T^j})\right)\cdot(\xi - \xi \mathds{1}_{\Omega_T^i} - h\,z\,\mathds{1}_{\Omega_T^j}) \, \psi (x) \diff t \diff x \geq 0.
$$
Considering integral on $\Omega^i_T$ and $\Omega_T \setminus \Omega^i_T$ we deduce
$$
\int_{\Omega_T \setminus \Omega^i_T} \left(\alpha - A(t,x,0)\right)\cdot\xi \, \psi(x) \diff t \diff x  + 
h \int_{\Omega^j_T} \left(A(t,x,\xi  + h\,z) - \alpha\right)\cdot z \, \psi(x) \diff t \diff x \geq 0.
$$
Note that $A(s,x,0) = 0$ due to \ref{intro:assumA_vanish} in Assumption \ref{intro:ass_on_A}. Therefore, by integrability, the first term tends to $0$ as $i \to \infty$. Therefore,
$$
\int_{\Omega^j_T} \left(A(t,x,\xi  + h\,z) - \alpha\right)\cdot z \, \psi(x) \diff t \diff x \geq 0.
$$
Now, we want to let $h \to 0$. We have convergence $A(t,x,\xi  + h\,z) \to  A(t,x,\xi)$ due to \ref{intro:ass_on_A:continuity} in Assumption \ref{intro:ass_on_A}. Moreover, $\xi  + h\,z$ is uniformly bounded on $\Omega^j_T$. Therefore, \ref{intro:propNfunct:item6} in Lemma \ref{intro:lem_prop_Nfunc} implies:
$$
\int_{\Omega^j_T} \left(A(t,x,\xi) - \alpha\right)\cdot z \, \psi(x) \diff t \diff x \geq 0.
$$
Finally, choosing $z(t,x) = -\frac{A(t,x,\xi) - \alpha(t,x)}{|A(t,x,\xi) - \alpha(t,x)|}\mathds{1}_{A(t,x,\xi) - \alpha(t,x) \neq 0}$, we deduce
$$
A(t,x,\xi) = \alpha(t,x) \qquad \mbox{ for a.e.} (t,x) \in \Omega^j_T \cap \supp \psi.
$$
Since $j$ and $\psi$ are arbitrary, the assertion follows.
\end{proof}
\section{Proof of existence result (Theorem \ref{mainresult:existence})}\label{existproof:section_label}
\noindent Consider sequence of solutions $\{u^{\theta} \}_{\theta \in (0,1]}$ to the regularized problem \eqref{res:regularized_problem}. Using Lemma \ref{res:rel_comp_conv_ae} as well as uniform bounds from Lemmata \ref{res:approximation_theta_lem} and \ref{intro:weak_compactness}, we can extract a subsequence denoted with $u_n := u^{\theta_n}$ and $\theta_n \to 0$ such that:
\begin{itemize}
\item $u_n \to u$ in $L^1(0,T; L^1(\Omega))$ and a.e. in $\Omega_T$,
\item $u_n \wstar u$ weakly-$\ast$ in $L^{\infty}(0,T;L^2(\Omega))$,
\item $\nabla u_n \wstar \nabla u$ weakly-$\ast$ in $L_M(\Omega_T)$,
\item $u_n \rightharpoonup u$ weakly in $L^1(0,T; W^{1,1}(\Omega))$,
\item $A(\cdot,\cdot, \nabla u_n)\wstar \alpha$ weakly-$\ast$ in $L_{M^*}(\Omega_T)$,
\end{itemize}
for some $u \in V^M_T(\Omega)$ and $\alpha \in L_{M^*}(\Omega_T)$.\\

\noindent For solutions to the regularized problem \eqref{res:regularized_problem} we have the weak formulation. Namely, for all $\varphi \in C^{\infty}_0([0,T) \times \Omega)$:
\begin{equation}\label{exist_proof:weak_form_bef_pass}
\begin{split}
-\int_{\Omega_T} u_n(t,x) \partial_t \varphi(t,x) \diff t \diff x \, 
-  &\int_{\Omega} u_0(x)\varphi(0,x) \diff x + \int_{\Omega_T} A(t,x,\nabla u_n) \cdot \nabla \varphi(t,x) \diff t \diff x \,+\\ 
&+
\int_{\Omega_T} \theta_n \nabla_{\xi} m(|\nabla u_{n}|) \cdot \nabla \varphi \diff t \diff x= 
\int_{\Omega_T} f(t,x) \varphi(t,x) \diff t \diff x.
\end{split}
\end{equation} 
Using Lemma \ref{res:limit_of_regularization}, we can pass to the limit with $n\to \infty$ (or $\theta_n \to 0$) in \eqref{exist_proof:weak_form_bef_pass} to obtain:
\begin{equation}\label{exist_proof:weak_form_after_pass}
\begin{split}
-\int_{\Omega_T} u(t,x) \partial_t \varphi(t,x) \diff t \diff x \,
- &\int_{\Omega} u_0(x)\varphi(0,x) \diff x =\\ 
&= -\, \int_{\Omega_T} \alpha \cdot \nabla \varphi(t,x) \diff t \diff x \,+
\int_{\Omega_T} f(t,x) \varphi(t,x) \diff t \diff x.
\end{split}
\end{equation} 
Thanks to \eqref{exist_proof:weak_form_after_pass}, the theory from Section \ref{subsec:study_of_gen_par_eqn} can be applied and by using Lemma \ref{res:local_energy_equality_form} we obtain that for $\psi \in C_0^{\infty}(\Omega)$ with $0\leq \psi \leq 1$ and a.e. $t \in (0,T)$:
\begin{equation}\label{res:local_energy_equality_in_the_existence_proof}
\begin{split}
\int_{\Omega} \psi(x) \left[G_k(u(t,x)) - G_k(u_0(x)) \right] \diff x= - \int_0^t \int_{\Omega} \alpha(s,x) \cdot \nabla \big(T_k(u(s,x))\big)\, \psi(x) \diff x \diff s\\ 
- \int_0^t \int_{\Omega} \alpha(s,x) \cdot \nabla\psi(x)\, T_k(u(s,x)) \diff x \diff s + \int_0^t \int_{\Omega} f(s,x) \,T_k(u(s,x)) \,\psi(x) \diff x \diff s.
\end{split}
\end{equation}
Due to Remark \ref{res:local_energy_equality_form_reg_probl}, a similar energy equality holds for sequence $\{ u_n \}_{n\in \N}$:
\begin{equation}\label{res:local_energy_equality_in_the_existence_proof_approx_problem}
\begin{split}
&\int_{\Omega} \psi(x) \left[G_k(u_n(t,x)) - G_k(u_0(x)) \right] \diff x=\\ 
&  =
- \int_0^t \int_{\Omega} A_{\theta_n}(s,x,\nabla u_n) \cdot \nabla\left[T_k(u_n)\, \psi(x) \right] \diff x \diff s + \int_0^t \int_{\Omega} f(s,x) \,T_k(u_n) \,\psi(x) \diff x \diff s.
\end{split}
\end{equation}
We note that the term with operator $A_{\theta_n}(s,x,\nabla u_n)$ can be decomposed into four parts:
\begin{itemize}
\item $\int_0^t \int_{\Omega} A(s,x,\nabla u_n) \cdot \nabla\big(T_k(u_n)\big)\, \psi(x) \diff x \diff s $ 
\item $\int_0^t \int_{\Omega} A(s,x,\nabla u_n) \cdot \nabla\psi(x) \, T_k(u_n) \,  \diff x \diff s $ which, due to $A(s,x,\nabla u_n) \wstar \alpha$, $u_n \to u$ a.e. and Dominated Convergence Theorem, converges to $\int_0^t \int_{\Omega} \alpha \cdot \nabla\psi(x) \, T_k(u) \,  \diff x \diff s $,
\item $\int_0^t \int_{\Omega} \theta_n \nabla_{\xi} m(|\nabla u_n|) \cdot \nabla\big(T_k(u_n)\big)\, \psi(x) \diff x \diff s $, which is nonnegative, 
\item $\int_0^t \int_{\Omega} \theta_n \nabla_{\xi} m(|\nabla u_n|) \cdot \nabla\psi(x) \, T_k(u_n) \,  \diff x \diff s $, converging to $0$ due to Lemma \ref{res:limit_of_regularization}.
\end{itemize}
Therefore, \eqref{res:local_energy_equality_in_the_existence_proof_approx_problem} implies:
\begin{equation*}
\begin{split}
& \limsup_{n \to \infty} \int_0^t \int_{\Omega} A(s,x,\nabla u_n) \cdot \nabla\big(T_k(u_n(s,x))\big)\, \psi(x) \diff x \diff s  \leq
 \\
&  \quad \leq  - \,\int_0^t \int_{\Omega} \alpha \cdot \nabla\psi(x) \, T_k(u(s,x)) \,  \diff x \diff s - \int_{\Omega} \psi(x) \left[G_k(u(t,x)) - G_k(u_0(x)) \right] \diff x \\
& \quad \quad + \int_0^t \int_{\Omega} f(s,x) \,T_k(u(s,x)) \,\psi(x) \diff x \diff s,
\end{split}
\end{equation*}
which combined with \eqref{res:local_energy_equality_in_the_existence_proof} yields:
\begin{equation}\label{proof:almost_monotonicity}
\limsup_{n \to \infty} \int_0^t \int_{\Omega} A(s,x,\nabla u_n) \cdot \nabla\big(T_k(u_n)\big)\, \psi(x) \diff x \diff s  \leq \int_0^t \int_{\Omega} \alpha(s,x) \cdot \nabla \big(T_k(u)\big)\, \psi(x) \diff x \diff s.
\end{equation}
Now, let $\alpha_k = \alpha \mathds{1}_{|u(t,x)| < k}$. We claim that for any $k \in \mathbb{N}$, $\eta \in L^{\infty}\left(\Omega_T; \R^d\right)$, $\psi \in C_0^{\infty}(\Omega)$ such that $0 \leq \psi \leq 1$ and a.e. $t \in (0,T)$:
\begin{equation}\label{proof:claim_monotonicity}
\int_{\Omega} \int_0^t \left(\alpha_k  - A(s,x,\eta)\right)\cdot(\nabla T_k(u) - \eta)\, \psi(x)  \diff s \diff x \geq 0.
\end{equation}
Indeed, by monotonicity (\ref{intro:assumA_mono} in Assumption \ref{intro:ass_on_A}) we have that
$$
\int_{\Omega} \int_0^t \left(A(s,x,\nabla T_k(u_n))  - A(s,x,\eta)\right)\cdot(\nabla T_k(u_n) - \eta) \, \psi(x) \diff s \diff x \geq 0.
$$
By denoting $\Omega_t = (0,t) \times \Omega$, we see that:
\begin{itemize}
\item $\int_{\Omega_t}A(s,x,\nabla T_k(u_n))\cdot \nabla T_k(u_n) \, \psi\diff s \diff x = \int_{\Omega_T}A(s,x,\nabla u_n)\cdot \nabla T_k(u_n)\, \psi \diff s \diff x $ since we have $\nabla\left[T_k(u_n)\right] = \nabla u_n \, \mathds{1}_{|u_n| < k}$,
\item $\int_{\Omega_t}A(s,x,\nabla T_k(u_n))\cdot \eta \, \psi \diff s \diff x\to \int_{\Omega_t} \alpha \mathds{1}_{|u(s,x)| < k}\cdot \eta \, \psi \diff s \diff x= \int_{\Omega_t} \alpha_k \cdot \eta \, \psi \diff s \diff x$. Indeed, we can write $A(s,x,\nabla T_k(u_n)) = A(s,x,\nabla u_n)\mathds{1}_{|u_n(s,x)| < k}$ and pass to the limit with $n$ using $A(s,x,\nabla u_n) \wstar \alpha(s,x)$ and $u_n \to u$ a.e.,
\item $\int_{\Omega_t} A(s,x,\eta) \cdot \nabla T_k(u_n) \, \psi \diff s \diff x \to \int_{\Omega_t} A(s,x,\eta) \cdot \nabla T_k(u) \, \psi \diff s \diff x$ due to $\nabla u_n \wstar \nabla u$ and $u_n \to u$ a.e.
\end{itemize}
Therefore, \eqref{proof:claim_monotonicity} follows. By monotonicity trick (Lemma \ref{res:monot_trick}), $\alpha_k(t,x) = A(t,x,\nabla T_k(u))$ for any $k \in \N$ and this finally implies $\alpha = A(t,x,u)$ concluding the proof of existence.\\

\noindent Finally, to establish global energy inequality \eqref{theorem:energyinequality}, we note that for a.e. $t \in (0,T)$
\begin{equation}\label{wlsc_l2}
\int_{\Omega} u^2(t,x) \diff x \leq \liminf_{n \to \infty} \int_{\Omega} (u^n(t,x))^2 \diff x 
\end{equation}
as $L^2$ norm is weakly lower semicontinuous (since it is strongly continuous and convex). We claim that 
\begin{equation}\label{wlsc_Aform}
\liminf_{n \to \infty} \int_0^t \int_{\Omega} A(s,x, \nabla u_n(s,x)) \cdot \nabla u_n(s,x) \, \diff x \diff s \geq \int_0^t \int_{\Omega} A(s,x, \nabla u(s,x)) \cdot \nabla u(s,x) \, \diff x \diff s.
\end{equation}
Indeed, let $k \in \mathbb{N}$. We can write
\begin{equation*}
\begin{split}
&\int_0^t \int_{\Omega} A(s,x, \nabla u_n(s,x)) \cdot \nabla u_n(s,x) \, \diff x \diff s = 
\\
& \quad = \int_0^t \int_{\Omega} \Big[A(s,x, \nabla u_n(s,x)) - A(s,x, \nabla u(s,x) \mathds{1}_{|\nabla u| \leq k})\Big] \cdot \Big[\nabla u_n(s,x) - \nabla u(s,x) \mathds{1}_{|\nabla u| \leq k} \Big] \, \diff x \diff s \\
& \quad + \int_0^t \int_{\Omega} A(s,x, \nabla u(s,x) \mathds{1}_{|\nabla u| \leq k}) \cdot \Big[\nabla u_n(s,x) - \nabla u(s,x) \mathds{1}_{|\nabla u| \leq k} \Big] \, \diff x \diff s \\
& \quad + \int_0^t \int_{\Omega} A(s,x, \nabla u_n(s,x)) \cdot \nabla u(s,x) \mathds{1}_{|\nabla u| \leq k},
\end{split}
\end{equation*}
where the first term is nonnegative due to \ref{intro:assumA_mono} in Assumption \ref{intro:ass_on_A}. Recall that we already know that $\nabla u_n \wstar \nabla u$ weakly-$\ast$ in $L_M(\Omega_T)$ and $A(\cdot,\cdot, \nabla u_n)\wstar A(\cdot,\cdot, \nabla u)$ weakly-$\ast$ in $L_{M^*}(\Omega_T)$. Lemma \ref{intro:lem_bound_on_A} implies that the map $(s,x)\mapsto A(s,x, \nabla u(s,x) \mathds{1}_{|\nabla u| \leq k})$ is bounded. Therefore,
\begin{equation*}
\begin{split}
& \liminf_{n\to \infty}\int_0^t \int_{\Omega} A(s,x, \nabla u_n(s,x)) \cdot \nabla u_n(s,x) \, \diff x \diff s \geq \\
& \qquad \qquad \qquad\geq \int_0^t \int_{\Omega} A(s,x, \nabla u(s,x) \mathds{1}_{|\nabla u| \leq k}) \cdot \nabla u(s,x) \mathds{1}_{|\nabla u| \geq k} \, \diff x \diff s \\
& \qquad \qquad \qquad + \int_0^t \int_{\Omega} A(s,x, \nabla u(s,x)) \cdot \nabla u(s,x) \mathds{1}_{|\nabla u| \leq k},
\end{split}
\end{equation*}
where the first term vanished due to presence of two characteristic functions $\mathds{1}_{|\nabla u| \geq k}$ and $\mathds{1}_{|\nabla u| \leq k}$. Finally, we let $k \to \infty$ and deduce \eqref{wlsc_Aform}.\\

\noindent By energy equality for the regularized problem \eqref{global_energy_regular_problem}, we have:
\begin{equation*}
\begin{split}
&\int_{\Omega} \big[(u_n(t,x))^2 - (u_0(x))^2 \big] \diff x =
-\, \int_0^t \int_{\Omega} A(s,x, \nabla u_n(s,x)) \cdot \nabla u_n(s,x) \, \diff x \diff s
\\ &  \qquad \qquad \qquad 
-\, \int_0^t \int_{\Omega} \theta_n \nabla_{\xi} m(|\nabla u_n|) \cdot \nabla u_n(s,x) \, \diff x \diff s + \int_0^t \int_{\Omega} f(s,x) \,u_n(s,x) \, \diff x \diff s.
\end{split}
\end{equation*}
We note that $\int_0^t \int_{\Omega} \theta_n \nabla_{\xi} m(|\nabla u_n|) \cdot \nabla u_n(s,x) \, \diff x \diff s \geq 0$ so that
\begin{equation*}
\begin{split}
\int_{\Omega} \big[(u_n(t,x))^2 - (u_0(x))^2 \big] \diff x \leq &
-\, \int_0^t \int_{\Omega} A(s,x, \nabla u_n(s,x)) \cdot \nabla u_n(s,x) \, \diff x \diff s
\\ & 
 + \int_0^t \int_{\Omega} f(s,x) \,u_n(s,x) \, \diff x \diff s.
\end{split}
\end{equation*}
Using \eqref{wlsc_l2} and \eqref{wlsc_Aform}, we let $n \to \infty$ and conclude the proof of the energy inequality \eqref{theorem:energyinequality} for a.e. $t \in (0,T)$. Finally, as the map $[0,T) \ni t \mapsto u(t,\cdot) \in L^2(\Omega)$ is weakly continuous, energy inequality holds for all $t \in [0,T)$.  
\section{Proof of uniqueness result (Theorem \ref{mainresult:uniqueness})}
\noindent To obtain the uniqueness of a weak solution, it is standard in the theory of parabolic equations to test the equation for the difference of solutions with the difference of solutions itself. In the Musielak-Orlicz framework, it is unfortunately not so straightforward. In fact, we want to improve the result of Lemma \ref{res:local_energy_equality_form}, where we showed the local energy equality, to the global energy equality, i.e. we want to remove the presence of the cut-off function. Next lemma shows that under the additional structural hypothesis on $M$ (the radial symmetry), such procedure can be made rigorous.
\begin{lem}\label{proof:uniqueness:crucial_lemma}
Let $\Omega$ be a Lipschitz domain. Suppose that the $N$-function $M$ is isotropic (as in assumptions of Theorem \ref{mainresult:uniqueness}) and Assumption \ref{intro:ass_on_M} is satisfied. Then, there is a family of functions $\left\{\psi_j \right\}_{j \in \N}$ compactly supported in $\Omega$ and fulfilling $\psi_j \to 1$ as $j \to \infty$, such that if $u \in L^{\infty}(0,T; L^{\infty}(\Omega)) \cap L^1(0,T; W^{1,1}_0(\Omega))$ with $\nabla u \in \mus$, we have
$$
\int_0^t \int_{\Omega} M\left(s,x,\left|\frac{\nabla\psi_{j}(x) \, u(s,x)}{C_{u}}\right|\right) \,\diff x \diff s \to 0 \mbox{ as } j \to \infty,
$$
where the constant $C_{u}$ can be chosen as $C_{u} = C \| \nabla u \|_{L_M}$ where $C$ depends only on $\Omega$.
\end{lem}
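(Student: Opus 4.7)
The plan is to construct $\psi_j$ as a partition of unity adapted to the Lipschitz structure of $\partial\Omega$ and then reduce the modular estimate to a one-dimensional bound along the inward normal. Using that $\partial\Omega$ is Lipschitz, I would cover it by finitely many balls $B_1,\dots,B_N$ in each of which $\Omega$ coincides (after a rigid motion) with the epigraph $\{x_d > \gamma_k(x')\}$ of a Lipschitz function $\gamma_k$, pick an inner ball $B_0 \Subset \Omega$ completing the cover of $\overline\Omega$, take a subordinate smooth partition of unity $\{\chi_k\}_{k=0}^N$, fix a smooth cutoff $\phi_j:\mathbb{R}\to[0,1]$ with $\phi_j=0$ on $(-\infty,1/j]$, $\phi_j=1$ on $[2/j,\infty)$ and $|\phi_j'|\leq C j$, and set
\begin{equation*}
\psi_j(x) = \chi_0(x) + \sum_{k=1}^N \chi_k(x)\, \phi_j\bigl(x_d - \gamma_k(x')\bigr).
\end{equation*}
Then $\psi_j$ is compactly supported in $\Omega$, converges pointwise to $1$, and its gradient is supported in a boundary strip $\Sigma_j$ of Lebesgue measure $\leq C/j$ with $|\nabla\psi_j|\leq Cj$ on $\Sigma_j$.

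The next step is the pointwise comparison between $\nabla\psi_j\, u$ and $\nabla u$. Since $u(s,\cdot)\in W^{1,1}_0(\Omega)$, the vertical fundamental-theorem-of-calculus identity yields, for $x\in\Sigma_j\cap B_k$ in local coordinates,
\begin{equation*}
|u(s,x)| \leq \int_{\gamma_k(x')}^{x_d} |\nabla u(s,x',\sigma)|\,\diff\sigma \leq \int_0^{2/j}|\nabla u(s,x',\gamma_k(x')+\tau)|\,\diff\tau.
\end{equation*}
Combined with $|\nabla\psi_j|\leq Cj$ this yields $|\nabla\psi_j(x)\, u(s,x)| \leq C j\int_0^{2/j}|\nabla u(s,x',\gamma_k(x')+\tau)|\,\diff\tau$ on $\Sigma_j$. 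I would then apply Jensen's inequality to the convex function $\xi\mapsto M(s,x,\xi)$ with respect to the $\tau$-average, use the isotropic form of Assumption \ref{intro:ass_on_M} to transfer $M(s,x,\cdot)$ to $M(s,y,\cdot)$ for $y=(x',\gamma_k(x')+\tau)$ on the normal segment (so $|x-y|\lesssim 1/j$), and invoke Fubini to absorb the two factors of $j$ coming from $|\nabla\psi_j|$ and from the averaging. Choosing $C_u=C'\|\nabla u\|_{L_M}$ with $C'$ large enough makes $\int_{\Omega_T} M(s,y, C|\nabla u|/C_u)\,\diff y\diff s \leq 1$ by \ref{intro:propNfunct:item55} in Lemma \ref{intro:lem_prop_Nfunc}; combined with absolute continuity of the Lebesgue integral and $|\Sigma_j|\to 0$, this finishes the argument after summing over $k$.

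The main obstacle lies in the scale-matching required in the previous step. The isotropic Assumption \ref{intro:ass_on_M} bounds $\Theta(\delta,r)\leq R$ only along the curve $r\sim C/\delta$, i.e.\ when the argument of $M$ is of order $j$. The bound on $|\nabla\psi_j\, u|$ produced above has precisely this order in typical situations, but when $|\nabla u|/C_u$ exceeds $j$ on a non-negligible part of $\Sigma_j$, Assumption \ref{intro:ass_on_M} fails to control $\Theta$ directly. The remedy I envisage is to split $\nabla u = \nabla u\,\mathds{1}_{|\nabla u|/C_u\leq j} + \nabla u\,\mathds{1}_{|\nabla u|/C_u> j}$: the first piece falls under the regime described above, while the second is controlled by combining the monotone convergence $\int_{\Omega_T} M(s,y,C|\nabla u|/C_u)\,\mathds{1}_{|\nabla u|/C_u> j}\,\diff y\diff s\to 0$ with the equi-integrability statement \ref{intro:propNfunct:item5} in Lemma \ref{intro:lem_prop_Nfunc}, so that its image under the normal-averaging operator also decays to zero.
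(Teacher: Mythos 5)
Your construction of $\psi_j$, the fundamental-theorem-of-calculus bound along the normal direction, and the Jensen/Fubini cancellation of the two factors of $j$ all match the paper's argument. The genuine gap is exactly the ``scale-matching'' obstacle you name at the end, and the remedy you propose does not close it. After you apply Jensen to $\xi \mapsto M(s,x,\xi)$, the quantity you must transfer from the base point $x$ to the nearby point $y=(x',\gamma_k(x')+\tau)$ is $M\left(s,x, C|\nabla u(s,y)|/C_u\right)$, whose argument is unbounded; Assumption \ref{intro:ass_on_M} controls the ratio of $M$ at nearby base points only along the scale $|\xi|\lesssim \delta^{-1}\sim j$, and for $|\xi|\gg j$ nothing in the hypotheses prevents $M(s,x,r)$ from being enormously larger than $M(s,y,r)$. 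Your splitting $\nabla u = \nabla u\,\mathds{1}_{|\nabla u|/C_u\le j}+\nabla u\,\mathds{1}_{|\nabla u|/C_u> j}$ does not repair this: the bad term is still an integral of $M(s,x,\cdot)$ evaluated at the large values $|\nabla u(s,y)|$, and the monotone-convergence statement you invoke concerns $\int M(s,y,\cdot)\,\mathds{1}_{|\nabla u|/C_u>j}$ at the point $y$, while \ref{intro:propNfunct:item5} gives equi-integrability of the functions themselves, not any comparison between $M$ at different base points. Neither tool converts decay of the $y$-integral into decay of the $x$-integral.

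The fix is a reordering, not a splitting, and it is where the hypothesis $u\in L^{\infty}(0,T;L^{\infty}(\Omega))$ — which your proposal never uses — enters. Before touching $\nabla u$, apply the comparison of Assumption \ref{intro:ass_on_M} to the quantity $\nabla\psi_j(x)\,u(s,x)/C_u$ itself: its modulus is at most $Cj\|u\|_{\infty}/C_u$, which sits precisely on the admissible scale for cubes of edge $1/j$, so $M(s,x,\cdot)$ may be replaced by $C\,M^{**}_{Q_m^j}(s,\cdot)$, an $x$-independent convex isotropic function. Only then perform the fundamental-theorem-of-calculus bound, Jensen and Fubini with $M^{**}_{Q_m^j}$, and return to $M$ at the point where $\nabla u$ actually lives through the trivial inequalities $M^{**}_{Q_m^j}(s,r)\le M_{Q_m^j}(s,r)=\essinf_{z\in 5Q_m^j\cap\Omega}M(s,z,r)\le M(s,x,r)$, which hold for all $r$ with no scale restriction (this is the reason the infimum is taken over the enlarged cube $5Q_m^j$, so that the whole normal segment stays inside it). With that reordering the rest of your argument — the choice $C_u=C\|\nabla u\|_{L_M}$ and absolute continuity of the integral over $\Omega\setminus\Omega_j$ — goes through as you describe.
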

\begin{proof}
Since $\Omega$ is a Lipschitz domain, we can flatten the boundary locally with bi-Lipschitz homeomorphisms so that using appropriate partition of unity, $\partial \Omega$ can be assumed to be flat. This argument relies heavily on the isotropy of $M$ as otherwise it is not clear if $\nabla u \in L_M(\Omega_T)$ implies $\nabla \left(u \circ \Psi\right) \in L_M(\Omega_T)$ for some bi-Lipschitz homeomorphism $\Psi$. \\ 

\noindent Let $\Omega_j = \left\{ x \in \Omega: \mbox{dist}(x, \partial \Omega) > \frac{1}{j} \right\}$ so that $\Omega_j \nearrow \Omega$ as $j \to \infty$. Moreover, let $\psi_j \in C_0^{\infty}(\Omega)$ such that $\psi_j = 1$ on $\Omega_j$. Note that $\nabla \psi_j = 0$ on $\Omega_j$ and $|\nabla \psi_j| \leq C j$ for some constant $C$. We cover $\Omega \setminus \Omega_j$ with the family of disjoint cubes $\{ Q_m^{j} \}_{m=1}^{N_{j}}$ with edge of length $\frac{1}{j}$. Then, we write for some constant $C_u$ to be chosen later:
\begin{equation}\label{uniqueness_bound}
\begin{split}
& \int_0^t \int_{\Omega} M\left(s,x,\left|\frac{\nabla\psi_{j}(x) \, u(s,x)}{C_{u}}\right|\right) \,\diff x \diff s = \\ 
&  \qquad \qquad =\int_0^t \int_{\Omega \setminus \Omega_j} M\left(s,x,\left|\frac{\nabla\psi_{j}(x) \, u(s,x)}{C_{u}}\right|\right) \,\diff x \diff s \\
&  \qquad \qquad \leq \sum_{m=1}^{N_j} \int_0^t \int_{Q_m^j \cap \Omega \setminus \Omega_j} M\left(s,x,\left|\frac{\nabla\psi_{j}(x) \, u(s,x)}{C_{u}}\right|\right) \,\diff x \diff s \\
  & \qquad \qquad \leq
\sum_{i=1}^{N_j} \int_0^t \int_{Q_m^j \cap \Omega \setminus \Omega_j} \frac{M\left(s,x,\left|\frac{\nabla\psi_{j}(x) \, u(s,x)}{C_{u}}\right|\right)}{M^{**}_{Q_m^j }\left(s,\left|\frac{\nabla\psi_{j}(x) \, u(s,x)}{C_{u}}\right| \right)} M^{**}_{Q_m^j }\left(s,\left|\frac{\nabla\psi_{j}(x) \, u(s,x)}{C_{u}}\right| \right) \,\diff x \diff s.
\end{split}
\end{equation}
Note that $\left|\frac{\nabla\psi_{j}(x) \, u(s,x)}{C_{u}}\right| \leq \frac{j \|u\|_{\infty} }{C_u}$ so that we can apply Assumption \ref{intro:ass_on_M} and deduce:
$$
\limsup_{j \to \infty} \frac{M\left(s,x,\left|\frac{\nabla\psi_{j}(x) \, u(s,x)}{C_{u}}\right|\right)}{M^{**}_{Q_m^j }\left(s,\left|\frac{\nabla\psi_{j}(x) \, u(s,x)}{C_{u}}\right| \right)} \leq C.
$$
Therefore, \eqref{uniqueness_bound} reads: 
\begin{equation}\label{uniqueness_bound_after_holcont}
\int_0^t \int_{\Omega} M\left(s,x,\left|\frac{\nabla\psi_{j}(x) \, u(s,x)}{C_{u}}\right|\right) \,\diff x \diff s \leq C  \sum_{i=1}^{N_j} \int_0^t \int_{Q_m^j \cap \Omega \setminus \Omega_j} M^{**}_{Q_m^j }\left(s,\left|\frac{\nabla\psi_{j}(x) \, u(s,x)}{C_{u}}\right| \right) \,\diff x \diff s.
\end{equation}
Now, for any $x = (x_1, x_2, ..., x_d) \in Q_m^j$, we write $x^* = (x_1, x_2, ..., 0)$ for its projection on the face of the cube $Q_m^j$ sticking to the boundary $\partial \Omega$ (see Figure \ref{pic_boundaryOmega}). Note that we assumed that the face of the cube is perpendicular to the axis of the last variable $x_d$ which is not restrictive and can be obtained by choosing appropriate straightening bi-Lipschitz homeomorphisms.\\

\begin{figure}
\begin{tikzpicture}

\draw[line width=0.4mm,->] (0,0) -- (7.7,0) node[anchor=north west] {$\R^{d-1}$ (variables $x_1$, ..., $x_{d-1}$)};
\draw[line width=0.4mm,->] (0,0) -- (0,4.4) node[anchor=south east] {$x_d$};

\filldraw[fill=white, draw=black] (1.0,0) rectangle (2.0,1);
\filldraw[fill=lightgray, draw=black] (2.0,0) rectangle (3.0,1);
\filldraw[fill=lightgray, draw=black] (3.0,0) rectangle (4.0,1);
\filldraw[fill=lightgray, draw=black] (4.0,0) rectangle (5.0,1);
\filldraw[fill=lightgray, draw=black] (5.0,0) rectangle (6.0,1);
\filldraw[fill=white, draw=black] (6.0,0) rectangle (7.0,1);
\draw[line width=0.4mm,<->] (7.3,0) -- (7.3,1);
\node at (7.6, 0.5) {$\frac{1}{j}$};

\draw [line width=0.4mm, draw=red] plot [smooth, tension=1] coordinates {  (1.3,0) (2.6,3.6) (5,3.1) (6.8,0)};
\draw[line width=0.6mm, draw=red] (1.3,0) -- (6.8,0);
\node at (5.3, 3.4) {{$\partial{\Omega}$}};

\end{tikzpicture}

\caption{The boundary $\partial \Omega$ with some part of it flattened after change of coordinates. Gray cubes from the family $\left\{Q^j_m\right\}_{m=1}^{N_j}$ correspond to the area that is relevant for further computations after application of partition of unity.}
\centering
\label{pic_boundaryOmega}
\end{figure}
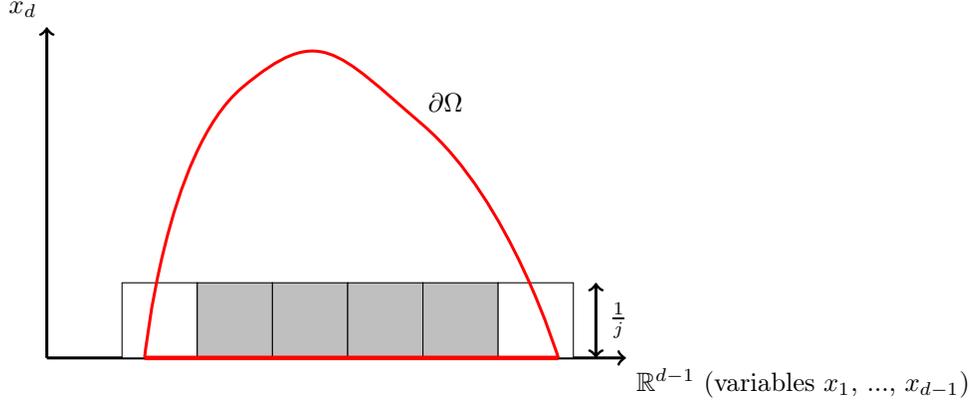

\noindent For a.e. $x \in Q_m^j$, using absolute continuity on lines for Sobolev maps (cf. Theorem 4.21 in \cite{evans2015measure}), we can write:
$$
u(s,x) = \int_0^{x_d} u_{x_d}(s, x_1, x_2, ..., r) \, \diff r
$$
where $u_{x_d}$ denotes derivative with respect to the last variable. Note that since $|x_d| \leq \frac{1}{j}$, $|u(s,x)|$ can be bounded as
$$
|u(s,x)| \leq  \int_0^{x_d} |u_{x_d}(s, x_1, x_2, ..., x_{d-1}, r)| \, \diff r
\leq \int_0^{\frac{1}{j}} |\nabla u(s, x_1, x_2, ..., x_{d-1}, r)| \, \diff r.
$$
Using this inequality in \eqref{uniqueness_bound_after_holcont}, we can continue as follows:
\begin{equation*}
\begin{split}
&\sum_{i=1}^{N_j} \int_0^t \int_{Q_m^j \cap \Omega \setminus \Omega_j} M^{**}_{Q_m^j }\left(s,\left|\frac{\nabla\psi_{j}(x) \, u(s,x)}{C_{u}}\right| \right) \,\diff x \diff s \leq \\ 
& \qquad \qquad \qquad \leq \sum_{i=1}^{N_j} \int_0^t \int_{Q_m^j \cap \Omega \setminus \Omega_j} M^{**}_{Q_m^j }\left(s, \frac{\left|\nabla\psi_{j}(x)\right|}{C_{u}} \int_0^{\frac{1}{j}} |\nabla u(s, x_1, x_2, ..., x_{d-1}, r)| \, \diff r \right) \,\diff x \diff s \\
& \qquad \qquad \qquad =\sum_{i=1}^{N_j} \int_0^t \int_{Q_m^j \cap \Omega \setminus \Omega_j} M^{**}_{Q_m^j }\left(s, \frac{C}{C_{u}} \, j  \int_0^{\frac{1}{j}} |\nabla u(s, x_1, x_2, ...,x_{d-1}, r)| \, \diff r \right) \,\diff x \diff s\\
& \qquad \qquad \qquad \leq \sum_{i=1}^{N_j}\int_0^{\frac{1}{j}} \int_0^t \int_{Q_m^j \cap \Omega \setminus \Omega_j} j \, M^{**}_{Q_m^j }\left(s,\frac{C}{C_u}  \left|\nabla u(s, x_1, x_2, ..., x_{d-1}, r) \right|  \right) \,\diff x \diff s \diff r
\end{split}
\end{equation*}
where we used the bound $\left|\nabla\psi_{j}(x)\right| \leq Cj$ and Jensen's inequality. Note that the integrand does not depend on $x_d$ and so, the integral over this variable cancels with the factor $j$. Finally, as cube has edge of length $\frac{1}{j}$, Fubini's theorem implies
\begin{equation*}
\begin{split}
&\sum_{i=1}^{N_j} \int_0^t \int_{Q_m^j \cap \Omega \setminus \Omega_j} M^{**}_{Q_m^j }\left(s,\left|\frac{\nabla\psi_{j}(x) \, u(s,x)}{C_{u}}\right| \right) \,\diff x \diff s \leq \\ 
& \qquad \qquad \qquad = \sum_{i=1}^{N_j} \int_0^t \int_{Q_m^j \cap \Omega \setminus \Omega_j} \, M^{**}_{Q_m^j }\left(s,\frac{C}{C_u}  \left|\nabla u(s, x_1, x_2, ..., x_{d-1}, x_d) \right|  \right) \,\diff x \diff s\\
& \qquad \qquad \qquad \leq \sum_{i=1}^{N_j} \int_0^t \int_{Q_m^j \cap \Omega \setminus \Omega_j} \, M\left(s,x,\frac{C}{C_u}  \left|\nabla u(s, x_1, x_2, ..., x_{d-1}, x_d) \right|  \right) \,\diff x \diff s\\
& \qquad \qquad \qquad = \int_0^t \int_{\Omega \setminus \Omega_j} \, M\left(s,x,\frac{C}{C_u}  \left|\nabla u(s, x) \right|  \right) \,\diff x \diff s.
\end{split}
\end{equation*}
Now, as $\nabla u \in L_M(\Omega_T)$, we can choose $C_u = C \| \nabla u \|_{L_M}$ so that the integral
$$
\int_0^t \int_{\Omega} M\left(s,x,\frac{C}{C_u}  \left|\nabla u(s, x) \right|  \right) \,\diff x \diff s
$$
is finite and the conclusion follows by $\Omega_j \nearrow \Omega$ as $j \to \infty$.
\end{proof}
\begin{lem}[Global energy equality]
Under assumptions of Lemma \ref{res:local_energy_equality_form} and Theorem \ref{mainresult:uniqueness}, the following energy equality is satisfied for a.e. $t \in (0,T)$:
\begin{equation}\label{res:global_energy_equality}
\begin{split}
&\int_{\Omega} \left[G_k(u(t,x)) - G_k(u_0(x)) \right] \diff x=\\ 
& \qquad \qquad =
- \int_0^t \int_{\Omega} \alpha(s,x) \cdot \nabla\left[T_k(u(s,x)) \right] \diff x \diff s + \int_0^t \int_{\Omega} f(s,x) \,T_k(u(s,x)) \, \diff x \diff s.
\end{split}
\end{equation}
\end{lem}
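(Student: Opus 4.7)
The plan is to apply the local energy equality \eqref{res:local_energy_equality} with the cut-off function $\psi = \psi_j$ supplied by Lemma \ref{proof:uniqueness:crucial_lemma} and then pass to the limit as $j \to \infty$. Writing $\nabla [T_k(u)\psi_j] = \psi_j \nabla T_k(u) + T_k(u) \nabla \psi_j$, the gradient term on the right-hand side of \eqref{res:local_energy_equality} splits naturally, so one is left with four pointwise limits to justify. Since $\psi_j \to 1$ pointwise on $\Omega$ and $\nabla \psi_j$ is supported in $\Omega \setminus \Omega_j$, most of the convergences will be standard.

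The convergence of the left-hand side and of the terms containing $\psi_j \nabla T_k(u)$ and $f\, T_k(u)\,\psi_j$ will all follow from the Dominated Convergence Theorem. For the left-hand side one dominates $|G_k(u)| + |G_k(u_0)| \leq k(|u| + |u_0|)$, which is integrable since $u \in L^\infty(0,T;L^2(\Omega))$ and $u_0 \in L^\infty(\Omega)$. For the term $\int \alpha \cdot \psi_j \nabla T_k(u)$ the dominating function $|\alpha \cdot \nabla T_k(u)|$ is integrable on $\Omega_T$ by the Young inequality \ref{intro:Hold_Mus_inequality_Y} in Lemma \ref{intro:Hold_Mus_inequality}, using $\nabla T_k(u) \in \mus$ and $\alpha \in L_{M^*}(\Omega_T)$. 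The integrand $f\, T_k(u)\, \psi_j$ is pointwise bounded by $k\|f\|_\infty$ and is also dominated.

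The main obstacle is to show that the remaining cross term $\int_0^t\!\int_\Omega \alpha(s,x) \cdot T_k(u(s,x))\, \nabla \psi_j(x)\, \diff x\, \diff s$ vanishes in the limit. Here I apply Lemma \ref{proof:uniqueness:crucial_lemma} to the function $T_k(u)$, which belongs to the admissible class since $\|T_k(u)\|_\infty \leq k$, $T_k(u) \in L^1(0,T;W^{1,1}_0(\Omega))$, and $|\nabla T_k(u)| \leq |\nabla u| \in \mus$. This yields a constant $C_*>0$ such that
\begin{equation*}
\int_0^t \int_\Omega M\!\left(s,x, \frac{T_k(u(s,x))\,\nabla \psi_j(x)}{C_*}\right) \diff x\, \diff s \longrightarrow 0 \quad \text{as } j \to \infty,
\end{equation*}
which is exactly the statement $T_k(u)\,\nabla \psi_j \armus{M} 0$ in the sense of Definition \ref{intro:def_mod_conv}. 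Combining this with the trivial modular convergence $\alpha \armus{M^*} \alpha$, Corollary \ref{intro:hold_conv_inmod} gives $\alpha \cdot T_k(u)\,\nabla \psi_j \to 0$ in $L^1(\Omega_T)$, which concludes the argument and yields \eqref{res:global_energy_equality}.
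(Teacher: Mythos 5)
Your proposal is correct and follows essentially the same route as the paper: you insert the cut-off functions $\psi_j$ of Lemma \ref{proof:uniqueness:crucial_lemma} into the local energy equality \eqref{res:local_energy_equality}, pass to the limit $j\to\infty$ by dominated convergence in the routine terms, and annihilate the cross term $\int_0^t\int_\Omega \alpha\cdot T_k(u)\,\nabla\psi_j$ by applying Lemma \ref{proof:uniqueness:crucial_lemma} to $T_k(u)$. The only cosmetic difference is that you dispatch this cross term through the modular-convergence product result (Corollary \ref{intro:hold_conv_inmod}), whereas the paper carries out the underlying Young-inequality splitting \eqref{proof_uniq:Youngsplit} by hand; the two arguments are equivalent.
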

\begin{proof}
The main idea is to consider local energy equality \eqref{res:local_energy_equality} with a sequence of cut-off functions $\left\{\psi_j\right\}_{j \in \N}$ from Lemma \ref{proof:uniqueness:crucial_lemma}. \\

\noindent Note that, as $j \to \infty$, $\psi_{j} \to 1$ in $\Omega$. Therefore, to conclude \eqref{res:global_energy_equality} from \eqref{res:local_energy_equality} we only have to establish:
$$
 \int_0^t \int_{\Omega} \alpha(s,x) \cdot \nabla\psi_{j}(x) \, T_k(u(s,x)) \,\diff x \diff s \to 0 \mbox{ as } j \to \infty.
$$
Since $\nabla\psi_{j}(x) = 0$ for $x \in \Omega_j$, we write for some constant $C_{\alpha}$ and $C_u$ to be chosen later:
\begin{equation}\label{proof_uniq:Youngsplit}
\begin{split}
\int_0^t \int_{\Omega \setminus \Omega_j} \alpha(s,x) \cdot \nabla\psi_{j}(x) \, T_k(u(s,x)) \,\diff x \diff s \leq 
C_{\alpha} C_u \int_0^t \int_{\Omega \setminus \Omega_j} M^*\left(s,x,\left|\frac{\alpha(s,x)}{C_{\alpha}}\right|\right) \,\diff x \diff s \, + \\   +\,
C_{\alpha} C_u \int_0^t \int_{\Omega \setminus \Omega_j} M\left(s,x,\left|\frac{\nabla\psi_{j}(x) \, T_k(u(s,x))}{C_{u}}\right|\right) \,\diff x \diff s,
\end{split}
\end{equation}
where we have applied Young's inequality (Lemma \ref{intro:Hold_Mus_inequality}). Since $\alpha \in L_{M*}(\Omega_T)$, there is $C_{\alpha}$ so that $M^*\left(s,x,\left|\frac{\alpha(s,x)}{C_{\alpha}}\right|\right) \,\diff x \diff s < \infty$. Choosing such $C_{\alpha}$, the first integral on the (RHS) of \eqref{proof_uniq:Youngsplit} tends to $0$ as $j \to \infty$ due to integrability of the integrand. Moreover, the second integral on the (RHS) of \eqref{proof_uniq:Youngsplit} converges to 0 due to Lemma \ref{proof:uniqueness:crucial_lemma}.
\end{proof}
\begin{rem}\label{rem:energy_equality}
Using Dominated Convergence Theorem, \eqref{res:global_energy_equality} implies that for a.e. $t \in (0,T)$:
\begin{equation}\label{res:global_energy_equality_without_truncation}
\frac{1}{2} \int_{\Omega} \left[u^2(t,x) - u_0^2(x) \right] \diff x =
- \int_0^t \int_{\Omega} \alpha(s,x) \cdot \nabla u(s,x) \, \diff x \diff s + \int_0^t \int_{\Omega} f(s,x) \,u(s,x) \, \diff x \diff s.
\end{equation}
\end{rem}
\begin{proof}[Proof of Theorem \ref{mainresult:uniqueness}] Energy equality \eqref{theorem:energyequality} follows from Remark \ref{rem:energy_equality}. Now, suppose there are two solutions $u$ and $v$ to \eqref{intro:parabolic_eq}. Then, their difference satisfies weak formultion for
$$
(u-v)_t = \DIV \left[A(t,x,\nabla u) - A(t,x,\nabla v) \right]
$$
with zero initial condition. Using \eqref{res:global_energy_equality_without_truncation} with 
$
\alpha(t,x) = A(t,x,\nabla u) - A(t,x,\nabla v),
$
we obtain for a.e. $t \in (0,T)$:
$$
\frac{1}{2}\int_{\Omega} (u(t,x) -v(t,x))^2 \diff x =
- \int_0^t \int_{\Omega} \Big[A(s,x,\nabla u) - A(s,x,\nabla v)\Big] \cdot \Big[\nabla u(s,x) - \nabla v(s,x)\Big] \, \diff x \diff s
$$
which due to weak monotonicity \ref{intro:assumA_mono} in Assumption \ref{intro:ass_on_A} implies $u = v$ a.e. in $\Omega_T$.
\end{proof}
{
\appendix
\section{Proofs concerning Examples \ref{ex:Nfunctions} and \ref{ex:operatorsA}}
 \subsection{Example \ref{ex:Nfunctions}}\label{app:example_Nfunctions}
Let $M(t,x,\xi) = |\xi|^{p(t,x)}$. We want to establish condition \ref{cond:ass_isotropic_case} in Remark \ref{rem:cond:ass_isotropic_case}. Fix $t \in (0,T)$ and $x,y \in \Omega$. When $|\xi| > 1$
$$
\frac{M(t,x,\xi)}{M(t,y,\xi)} = |\xi|^{p(t,x)-p(t,y)} \leq |\xi|^{|p(t,x)-p(t,y)|} \leq |\xi|^{-\frac{C}{\log|x-y|}}
$$
since $p(t,x) \in L^{\infty}(0,T; \Clog(\Omega))$. We let $\Theta(t,\delta,\xi):= |\xi|^{-\frac{C}{\log \delta}}$ so that for all $\widetilde{C}>1$ we have
$$
\Theta(t,\delta,\widetilde{C}\delta^{-1}) \leq \left(\widetilde{C} \, \delta^{-1}\right)^{-\frac{C}{\log \delta}} =\left(\delta/\widetilde{C} \right)^{\frac{C}{\log \delta}} = e^{\frac{C}{\log \delta}\, \log(\delta/\widetilde{C})} = e^C\, e^{-\frac{C\, \log\widetilde{C}}{\log \delta}}
$$
so that $\limsup_{\delta \to 0} \Theta(t,\delta,\widetilde{C}\delta^{-1})$ is bounded.\\

\noindent Similarly, let $M(t,x,\xi) = |\xi|^{p(t,x)} +a(t,x)\,|\xi|^{q(t,x)}$ and suppose that $q(t,y) - p(t,y) \leq \alpha$. Then, for $|\xi| > 1$ we have
\begin{align*}
\frac{M(t,x,\xi)}{M(t,y,\xi)} &= \frac{|\xi|^{p(t,x)} +a(t,x)\,|\xi|^{q(t,x)}}{|\xi|^{p(t,y)} +a(t,y)\,|\xi|^{q(t,y)}} = \frac{|\xi|^{q(t,x)}}{|\xi|^{q(t,y)}} \frac{|\xi|^{p(t,x)-q(t,x)} +a(t,x)}{|\xi|^{p(t,y)-q(t,y)} +a(t,y)} \leq \\
&\leq |\xi|^{q(t,x)-q(t,y)}\, \left[\frac{|\xi|^{p(t,x)-q(t,x)}}{|\xi|^{p(t,y)-q(t,y)}} + \frac{a(t,x) - a(t,y)}{|\xi|^{p(t,y)-q(t,y)}} + 1 \right] \\
&\leq |\xi|^{q(t,x)-q(t,y)}\, \left[{|\xi|^{p(t,x)-p(t,y)}}\,{|\xi|^{q(t,y)-q(t,x)}} + \frac{a(t,x) - a(t,y)}{|\xi|^{p(t,y)-q(t,y)}} + 1 \right]\\
&\leq |\xi|^{-\frac{C}{\log|x-y|}}\, \left[{|\xi|^{-\frac{C}{\log|x-y|}}}\,{|\xi|^{-\frac{C}{\log|x-y|}}} + {|a|_{\alpha}\,|x-y|^{\alpha}} \, {|\xi|^{q(t,y)-p(t,y)}} + 1 \right] \\
&\leq |\xi|^{-\frac{C}{\log|x-y|}}\, \left[{|\xi|^{-\frac{C}{\log|x-y|}}}\,{|\xi|^{-\frac{C}{\log|x-y|}}} + {|a|_{\alpha}\,|x-y|^{\alpha}}\,{|\xi|^{\alpha}} + 1 \right]
\end{align*}
where $|a|_{\alpha}$ is a constant such that $|a(t,x) - a(t,y)| \leq |a|_{\alpha}\,|x-y|^{\alpha}$. Hence, we define
$$
\Theta(t,\delta,\xi)=|\xi|^{-\frac{C}{\log \delta}}\, \left[{|\xi|^{-\frac{C}{\log\delta}}}\,{|\xi|^{-\frac{C}{\log \delta}}} + {|a|_{\alpha}\,\delta^{\alpha}}\,{|\xi|^{\alpha}} + 1 \right].
$$
We have already seen that $\left(\widetilde{C} \, \delta^{-1}\right)^{-\frac{C}{\log \delta}}$ is bounded when $\delta \to 0$. It follows that $\Theta(t,\delta,\widetilde{C}\delta^{-1})$ is bounded for such $\delta$.
}
\subsection{Example \ref{ex:operatorsA}}\label{app:example_A}
{ In both examples, the only nontrivial condition in Assumption \ref{intro:ass_on_A} is \ref{intro:ass_on_A:coercgr} (growth and coercivity). For \ref{exA:ptx}, we study ${A}(t,x,\xi) = |\xi|^{p(t,x)-2}\, \xi$ with $M(t,x,\xi) = |\xi|^{p(t,x)} = A(t,x,\xi)\cdot \xi$ so that we only need to verify
$$
M^*(t,x,A(t,x,\xi)) \leq C \, A(t,x,\xi) \cdot \xi
$$
for some numerical constant $C$. As we know that $M^*(t,x,\xi) \leq C\, |\xi|^{p'(t,x)}$ where $p'(t,x)$ is H\"older conjugate of $p(t,x)$ (i.e. $\frac{1}{p(t,x)} + \frac{1}{p'(t,x)} = 1$) we have
$$
M^*(t,x,A(t,x,\xi)) \leq C\, \left||\xi|^{p(t,x)-2}\, \xi \right|^{p'(t,x)} = C\,\left|\xi \right|^{(p(t,x)-1)\,p'(t,x)} = C\,\left|\xi \right|^{p(t,x)} = C \, A(t,x,\xi) \cdot \xi.
$$
For \ref{exA:double_phase}, we have $A(t,x,\xi)=|\xi|^{p(t,x)-2}\,\xi + a(t,x)\, |\xi|^{q(t,x)-2}\,\xi$ and $M(t,x,\xi) = |\xi|^{p(t,x)} + a(t,x)\, |\xi|^{q(t,x)}$. Again, since $A(t,x,\xi) \cdot \xi = M(t,x,\xi)$ we only have to prove
$$
M^*(t,x,A(t,x,\xi)) \leq C \, A(t,x,\xi) \cdot \xi
$$
for some numerical constant $C$. Using Definition \ref{intro:def_compl_func},
\begin{align*}
M^*(t,x,A(t,x,\xi)) &= \sup_{\eta \in \R^d} \left\{\eta \, A(t,x,\xi) - M(t,x,\xi) \right\}\\
&\leq \sup_{\eta \in \R^d} \left\{\eta \cdot \left( |\xi|^{p(t,x)-2}\,\xi + a(t,x)\, |\xi|^{q(t,x)-2}\,\xi\right) - \left(|\xi|^{p(t,x)} + a(t,x)\, |\xi|^{q(t,x)}\right) \right\} \\
&\leq \sup_{\eta \in \R^d} \left\{\eta \cdot \xi \, |\xi|^{p(t,x)-2}  - |\xi|^{p(t,x)} \right\} + a(t,x)\,
\sup_{\eta \in \R^d} \left\{\eta \cdot \xi \, |\xi|^{q(t,x)-2} - |\xi|^{q(t,x)} \right\}
\end{align*}
We introduce auxillary notation $M_1(t,x,\xi) = |\xi|^{p(t,x)}$, $A_1(t,x,\xi) = |\xi|^{p(t,x)-2}\,\xi$ as well as $M_2(t,x,\xi) = |\xi|^{q(t,x)}$, $A_2(t,x,\xi) = |\xi|^{q(t,x)-2}\,\xi$ and we recognize that
\begin{align*}
M^*(t,x,A(t,x,\xi)) &\leq M_1^*(t,x,A_1(t,x,\xi)) + a(t,x) \, M_2^*(t,x,A_2(t,x,\xi)) \\ &\leq A_1(t,x,\xi)\cdot\xi + a(t,x) \, A_2(t,x,\xi)\cdot\xi = \mathcal{A}(t,x,\xi) \cdot \xi
\end{align*}
which is justified by computations for the variable exponent case.
}
\bibliographystyle{abbrv}
\bibliography{parpde_mo_discmodul}
\end{document}